\newcommand \N{\mathbb{N}}
\newcommand \X{X}
\newcommand \R{\mathbb{R}}
\newcommand \B{\mathcal{B}}
\newtheorem{teo}{Theorem}[section]
\newtheorem{lema}[teo]{Lemma}
\newtheorem{cor}[teo]{Corollary}
\newtheorem{pro}[teo]{Proposition}
\newtheorem{defi}[teo]{Definition}
\newtheorem{ob}[teo]{Remark}
\newtheorem{que}[teo]{Question}
\newtheorem{nota}[teo]{Notation}
	\title[Distribution function on a linearly ordered space]{The distribution function of a probability measure on a linearly ordered topological space}
\author{J.F. G\'alvez-Rodr\'{\i}guez \and M.A. S\'anchez-Granero}
\email{jgr409@ual.es and misanche@ual.es}
\thanks{The second author is supported by grant MTM2015-64373-P (MINECO/FEDER, UE)}
\curraddr{Department of Mathematics\\
	University of Almer\'{\i}a\\ 04120 Almer\'{\i}a (Spain)}
\subjclass[2010]{Primary 60E05; Secondary 60B11}
\keywords{probability; measure; $\sigma$-algebra; Borel $\sigma$-algebra; distribution function; cumulative distribution function; sample; linearly ordered topological space}
\begin{document}

	\begin{abstract}
		In this paper we describe a theory of a cumulative distribution function on a  space with an order from a probability measure defined in this space. This distribution function plays a similar role to that played in the classical case. Moreover, we define its pseudo-inverse and study its properties. Those properties will allow us to generate samples of a distribution and give us the chance to calculate integrals with respect to the related probability measure. 
	\end{abstract}

\maketitle

\section{Introduction}

This work collects some results on a theory of a cumulative distribution function on a linearly ordered topological space (LOTS).

Moreover, we show that this function plays a similar role to that played in the classical case and study its inverse, which allows us to generate samples of the probability measure that we use to define the distribution function.

The main goal of this paper is to  provide a theory of a cumulative distribution function on a space with a linear order. What is more, we show that a cumulative distribution function in this context plays a similar role to that played by a distribution function in the classical case. Recall that, in the classical case, the cumulative distribution function (in short, cdf) of a real-valued random variable $X$ is the function given by $F_X(x)=P[X \leq x]$ and it satisfies the following properties:

\begin{enumerate}
	\item $F$ is non-decreasing, what means that for each $x, y \in \R$ with $x<y$, we have $F(x) \leq F(y)$.
	\item $F$ is right-continuous, what means that $F(a)=\lim_{x \rightarrow a^+} F(x)$, for each $a \in \R$. Furthermore, it follows that
	$\lim_{x \rightarrow -\infty} F(x)=0$ and $\lim_{x \rightarrow +\infty} F(x)=1$.
\end{enumerate}

Moreover, given a cdf in an ordered space, we define its pseudo-inverse and study its properties. In the classical case, if the cdf $F$ is strictly increasing and continuous then $ F^{-1}(p),p\in [0,1],$ is the unique real number $x$ such that $F(x)=p$. In such a case, this defines the inverse distribution function.

Some distributions do not have a unique inverse (for example in the case where the density function ${\displaystyle f_{X}(x)=0}$ for all ${\displaystyle a<x<b}$, causing ${\displaystyle F_{X}}$ to be constant). This problem can be solved by defining, for ${\displaystyle p\in [0,1]}$, the pseudo-inverse distribution function:

$ F^{-1}(p)=\inf\{x\in \mathbb {R} :F(x)\geq p\}$

The inverse of a cdf let us generate samples of a distribution. Indeed, let $X$ be a random variable whose distribution can be described by the cumulative distribution function $F$. We want to generate values of $X$ which are distributed according to this distribution. The inverse transform sampling method works as follows: generate a random number $u$ from the standard uniform distribution in the interval $[0,1]$ and then take $x=F^{-1}(u)$.

Roughly speaking, given a continuous uniform variable $U$ in $[0,1]$ and a cumulative distribution function $F$, the random variable $X=F^{-1}(U)$ has distribution $F$ (or, $X$ is distributed $F$).

For further reference about the pseudo-inverse of $F$ see, for example, \cite[Chapter 1]{Baz}.

In our context, we study the pseudo-inverse of a cdf. This pseudo-inverse allows us to generate samples of the distribution as well as to calculate integrals with respect to the related probability measure.

\section{Preliminaries}

\subsection{Measure theory}

Now we recall some definitions related to measure theory from \cite{Halmos}.
Let $X$ be a set, then there are several classes of sets of $X$. If $\mathcal{R}$ is a non-empty collection of subsets of $X$, we say that $\mathcal{R}$ is a ring if it is closed under complement and finite union. What is more, given $\mathcal{Q}$ is a non-empty collection of subsets of $X$ it is said to be an algebra if it is a ring such that $X \in \mathcal{Q}$. Moreover, a non-empty collection of subsets of $X$, $\mathcal{A}$, is a $\sigma$-algebra if it is closed under complement and countable union and $X \in \mathcal{A}$.

For a given topological space, $(X, \tau)$,  $\mathcal{B}=\sigma(\tau)$ is the Borel $\sigma$-algebra of the space, that is, it is the $\sigma$-algebra generated by the open sets of $X$.

\begin{defi}
	Given a measurable space $(\Omega, \mathcal{A})$, a measure $\mu$ is a non-negative and $\sigma$-additive set mapping defined on $\mathcal{A}$ such that $\mu(\emptyset)=0$. 
\end{defi}

A set mapping is said to be $\sigma$-additive if $\mu(\bigcup_{n=1}^\infty A_n)=\sum_{n=1}^\infty \mu(A_n)$ for each countable collection $\{A_n\}_{n=1}^\infty$ of pairwise disjoint sets in $\mathcal{A}$.

Each measure is monotonous, what means that $\mu(A) \leq \mu(B)$, for each $A \subseteq B$. Moreover it is continuous in the next sense: if $A_n \rightarrow A$, then $\mu(A_n) \rightarrow \mu(A)$. What is more, if $A_n$ is a monotically non-decreasing sequence of sets (what means that $A_n \subseteq A_{n+1}$, for each $n \in \N$) then $\mu(A_n) \rightarrow \mu(\bigcup_{n \in \N} A_n)$. If $A_n$ is monotically non-increasing (what means that $A_{n+1} \subseteq A_{n}$, for each $n \in \N$), then $\mu(A_n) \rightarrow \mu(\bigcap_{n \in \N} A_n)$.

\subsection{Ordered sets}

First we recall the definition of a linear order and a linearly ordered topological space:

\begin{defi}(\cite[Chapter 1]{Roman})
	A partially ordered set $(P, \leq)$ (that is, a set $P$ with the binary relation $\leq$ that is reflexive, antisymmetric and transitive) is totally ordered if every $x, y \in P$ are comparable, that is, $x \leq y$ or $y \leq x$. In this case, the order is said to be total or linear. 
\end{defi}

For further reference about partially ordered sets see, for example, \cite{Dushnik}.

\begin{defi}(\cite[Section 1]{Lutzer})
	A linearly ordered topological space (abbreviated LOTS) is a triple $(X, \tau, \leq)$ where $(X, \leq)$ is a linearly ordered set and where $\tau$ is the topology of the order $\leq$.
\end{defi}

The definition of the order topology is the following one:

\begin{defi}(\cite[Part II, 39]{Counter})
	Let $X$ be a set which is linearly ordered by $<$, we define the order topology $\tau$ on $X$ by taking the subbasis $\{\{x \in X: x<a\}: a \in X\} \cup \{\{x \in X: x>a\}: a \in X\}$.  
\end{defi}

From a linear order, $\leq$, in $X$ we define

\begin{defi}
	Let $a, b \in X$ with $a \leq b$, we define the set $]a,b]=\{x \in \X: a < x \leq b\}$. Analogously, we define $]a,b[, [a,b]$ and $[a,b[$. Moreover, $(\leq a)$ is given by $(\leq a)=\{x \in \X: x \leq a\}$. $(<a), (\geq a)$ and $(>a)$ are defined similarly. 
\end{defi}

\begin{nota}
	Let $a \in X$, we will also use $(a,\infty)$ and $[a,\infty)$ to denote $(>a)$ and $(\geq a)$, respectively. Similarly, $(-\infty,a)$ and $(-\infty,a]$ will also denote $(<a)$ and $\leq a$, respectively.
\end{nota}

\begin{ob}\label{ob:base}
	Note that an open basis of $X$ with respect to $\tau$ is given by $\{]a,b[: a<b, a, b \in (X \cup \{-\infty,\infty\}) \}$.
\end{ob}

For our study we need to introduce some terminology.

\begin{defi}(\cite[Section 1]{Lutzer})
	Let $(X, \leq)$ be a linearly ordered set. A subset $C \subseteq X$ is said to be convex in $X$ if, whenever $a, b \in C$ with $a \leq b$, then $\{x \in X: a \leq x \leq b\}$ is a subset of $C$. 
\end{defi}

\begin{pro}(\cite[Part II, 39]{Counter})\label{pro:convexcomp}
	Any subset $A \subseteq X$ can be uniquely expressed as an union of disjoint, nonempty, maximal convex sets in $A$, called convex components.
\end{pro}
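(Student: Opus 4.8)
The plan is to exhibit the convex components as the classes of a natural equivalence relation on $A$. For $x,y\in A$ declare $x\sim y$ if the order interval joining them lies in $A$, i.e. $[x,y]\subseteq A$ whenever $x\le y$ (and $[y,x]\subseteq A$ whenever $y\le x$). Reflexivity and symmetry are immediate from the definition. The only step that takes any work is transitivity: assuming $x\sim y$ and $y\sim z$, one splits into the possible orderings of the three points and in each case notes that the interval between the two outermost points is either the union of two intervals already known to lie in $A$ or is contained in one of them; for example, if $x\le y\le z$ then $[x,z]=[x,y]\cup[y,z]\subseteq A$, while if $z\le x\le y$ then $[z,x]\subseteq[z,y]\subseteq A$. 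Hence $\sim$ is an equivalence relation, and I take the convex components to be its equivalence classes.

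Next I would verify that these classes have the stated properties. They are nonempty, pairwise disjoint and cover $A$ simply because they are the classes of an equivalence relation on $A$. To see that a class $C$ is convex in $X$, pick $a,b\in C$ with $a\le b$ and $x\in X$ with $a\le x\le b$; since $a\sim b$ we have $[a,b]\subseteq A$, so $x\in A$, and from $[a,x]\subseteq[a,b]\subseteq A$ we get $a\sim x$, hence $x\in C$.

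For maximality and uniqueness: if $D$ is a convex (in $X$) subset of $A$ that meets a class $C$, say $x\in C\cap D$, then for every $y\in D$ the interval joining $x$ and $y$ lies in $D\subseteq A$ by convexity of $D$, so $y\sim x$ and $y\in C$; thus $D\subseteq C$, which shows each class is maximal among the convex subsets of $A$. Consequently, if $A=\bigcup_i A_i$ is any decomposition into pairwise disjoint nonempty maximal convex subsets, then each $A_i$ is contained in the class of any of its points and, being maximal, equals that class, while conversely each class is contained in some $A_i$ and, being maximal, equals it. Hence the decomposition is forced to be the partition into equivalence classes, which yields both existence and uniqueness. The single delicate point in this argument is the case analysis establishing transitivity of $\sim$; the remainder is a routine unwinding of the definitions of convexity in $X$ and of maximality.
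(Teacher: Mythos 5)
Your proof is correct. The paper does not actually prove this proposition --- it is quoted from Steen--Seebach with a citation --- and your equivalence-relation argument (with $x\sim y$ iff the order interval between them lies in $A$, transitivity checked by cases using totality of the order, and the classes shown to absorb any convex subset of $A$ they meet, which gives maximality and forces uniqueness) is the standard and complete way to establish it.
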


 The definition of interval is the following one.

\begin{defi}(\cite[Section 1]{Lutzer})
	An interval of $X$ is a convex subset of $X$ having two endpoints in $X$, which can belong to the interval or not. 
\end{defi}

For convention, we will assume that $\infty$ and $-\infty$ can be the endpoints of intervals.

\begin{defi}(\cite[Defs 2.16, 2.17]{order})
	Let $P$ be an ordered set and let $A \subseteq P$. Then:
	\begin{enumerate}
		\item $l$ is called a lower bound of $A$ if, and only if we have $l \leq a$, for each $a \in A$.
		\item $u$ is called an upper bound of $A$ if, and only if we have $u \geq a$, for each $a \in A$.
	\end{enumerate}
\end{defi}

\begin{defi}
	Given $A \subseteq X$, we denote by $A^l$ and $A^u$, respectively, the set of lower and upper bounds of $A$.
\end{defi}

\begin{defi}(\cite[Def 3.18]{order})
	Let $P$ be an ordered set and let $A \subseteq P$. Then:
	\begin{enumerate}
		\item The point $u$ is called the lowest upper bound or supremum or join of $A$ iff $u$ is the minimum of the set $A^u$.
			\item The point $u$ is called the greatest lower bound or infimum or meet of $A$ iff $l$ is the maximum of the set $A^l$.
	\end{enumerate}
\end{defi}

\begin{pro}(\cite[Part II, 39]{Counter})\label{pro:complete}
	The order topology on $X$ is compact if, and only if the order is complete, that is, if, and only if, every nonempty subset of $X$ has a greatest lower bound and a least upper bound.
\end{pro}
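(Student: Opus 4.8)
\medskip
\noindent\textbf{Proof proposal.}
The plan is to prove the two implications separately, reading ``the order is complete'' as ``every nonempty subset of $X$ has both a supremum and an infimum''. For ``compact $\Rightarrow$ complete'' I would argue by contraposition; for ``complete $\Rightarrow$ compact'' I would check the open-cover definition directly. Throughout I would use the open basis of $X$ recorded in Remark~\ref{ob:base}, together with the elementary observation that the order topology of $\leq$ equals that of the reversed order (the subbasis $\{(<a):a\in X\}\cup\{(>a):a\in X\}$ is unchanged), so that anything proved about suprema transfers to infima at once.

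For ``compact $\Rightarrow$ complete'', suppose some nonempty $A\subseteq X$ has no least upper bound, that is, $A^u$ has no minimum. If $A^u=\emptyset$, then every $x\in X$ lies below some $a\in A$, so $\{(<a):a\in A\}$ is an open cover, yet any finite subfamily is contained in a single $(<a^*)$ with $a^*\in A$ and hence omits $a^*$. If instead $A^u\neq\emptyset$, then $\{(<a):a\in A\}\cup\{(>u):u\in A^u\}$ is an open cover: a point outside $A^u$ lies below some $a\in A$, and a point $x\in A^u$ lies above some strictly smaller $u\in A^u$ (such $u$ exists because $A^u$ has no minimum). Given a finite subcover, let $a^*$ be the largest of the chosen $a$'s (so $a^*\in A$) and $u^*$ the least of the chosen $u$'s (so $u^*\in A^u$); then $a^*\leq u^*$, and $a^*=u^*$ cannot happen, since it would make $a^*=\max A$ the minimum of $A^u$. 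Hence $a^*<u^*$, and then $u^*$ is covered by neither $(<a^*)$ nor $(>u^*)$, a contradiction. So $X$ has all suprema, and applying the same to the reversed order it has all infima; thus $X$ is complete.

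For ``complete $\Rightarrow$ compact'', assume all suprema and infima exist; in particular $X$ has a least element $m$ and a greatest element $M$. Given an open cover $\mathcal{C}$, set
\[
S=\{x\in X:\ (\leq x)\ \text{is covered by finitely many members of}\ \mathcal{C}\},
\]
so that $m\in S$ because $(\leq m)=\{m\}$; let $s=\sup S$. First I would show $s\in S$: pick $U\in\mathcal{C}$ with $s\in U$; by Remark~\ref{ob:base}, either $s=m$ (and then $s\in S$ trivially), or $U$ contains a basic neighbourhood of $s$ and hence a set $]c,s]$ with $c\in X$ and $c<s$; since $c<s=\sup S$ there is $x\in S$ with $c<x\leq s$, and then $(\leq s)=(\leq x)\cup\,]c,s]$ is covered by $U$ together with a finite subcover of $(\leq x)$. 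Next I would show $s=M$: otherwise a basic neighbourhood of $s$ lying inside some member of $\mathcal{C}$ yields a point $y>s$ with $(\leq y)$ still finitely covered --- either $s$ has an immediate successor $y$, in which case one more member of $\mathcal{C}$ covers $\{y\}$, or the neighbourhood is an interval $]c,d[$ with $d>s$ and one picks $s<y<d$ --- so $y\in S$ with $y>\sup S$, which is absurd. Therefore $s=M\in S$, i.e.\ $X=(\leq M)$ has a finite subcover.

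I expect the main friction to be bookkeeping rather than genuine difficulty: one must carefully enumerate the possible shapes of a basic neighbourhood of a point in a LOTS --- interior point versus the extreme points $m$ and $M$, and ``jump'' points (those with an immediate successor or predecessor) versus points that are two-sided limits --- and keep in mind the convention that $-\infty$ and $\infty$ are allowed only as formal endpoints, so that when $X$ lacks a greatest or least element one verifies that completeness does fail there, exactly as in the $A^u=\emptyset$ (resp. $A^l=\emptyset$) cases above. A slicker alternative for ``complete $\Rightarrow$ compact'' is to apply the Alexander subbase lemma and cover $X$ only by subbasic sets $(<a)$ and $(>b)$, which collapses the case analysis; I would nonetheless present the self-contained supremum argument sketched above.
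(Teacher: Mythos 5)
The paper does not prove this proposition at all: it is imported verbatim from the cited reference (Steen--Seebach, Part II, 39) and used as a black box, so there is no in-paper argument to compare yours against. Your self-contained proof is the standard one and is essentially correct. In the direction ``compact $\Rightarrow$ complete'' your contrapositive via the cover $\{(<a):a\in A\}\cup\{(>u):u\in A^u\}$ is right, but you should also dispose of the degenerate finite subcovers that use only sets of one type (only $(<a)$'s: their union is $(<a^*)$ and omits $a^*\in A$; only $(>u)$'s: their union is $(>u^*)$ and omits $u^*$), since your argument as written presupposes both an $a^*$ and a $u^*$ exist. In the direction ``complete $\Rightarrow$ compact'' the supremum argument with $S=\{x:(\leq x)\text{ finitely covered}\}$ is correct; the only case you gloss over is a basic neighbourhood $]c,\infty[$ of $s$ (endpoint $b=\infty$), where one concludes directly that $]s,M]\subseteq U$ and hence $M\in S$. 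Both are exactly the bookkeeping you anticipate, and neither affects the validity of the argument. Your remark that the reversed order has the same order topology, so suprema yield infima for free, is a clean way to halve the work, and the Alexander subbase alternative you mention would indeed shorten the second direction further.
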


\begin{ob}
In the rest of the paper, unless otherwise stated, $X$ will be a separable LOTS and a measure in $X$ will be with respect to the Borel $\sigma$-algebra of $X$.
\end{ob}

\section{The order in $X$}

In this section we study some properties (mainly topological) of a separable LOTS.

The definition of the topology $\tau$ suggest the next

\begin{defi}
	Let $x \in \X$, it is said to be a left-isolated (respectively right- isolated) point if $(<x)=\emptyset$ (respectively $(>x)=\emptyset$) or there exists $z \in \X$ such that $]z,x[ =\emptyset$  (respectively there exists $z \in X$ such that $]x,z[=\emptyset$). Moreover, we will say that $x \in \X$ is isolated if it is both right and left-isolated.
\end{defi}

\begin{lema}\label{lema:infA}
	Let $A, B \subseteq X$ be such that $A^l=B^l$ (respectively $A^u=B^u$). If there exists $\inf A$ (respectively $\sup A$), then there exists $\inf B$ (respectively $\sup B$) and $\inf A=\inf B$ (respectively $\sup A=\sup B$). 
	
	\begin{proof}
		
		Let $A, B\subseteq X$ be such that $A^l=B^l$ and suppose that there exists $\inf A$. It holds that $x \leq \inf A$, for each $x \in A^l$. Now, since $A^l=B^l$, we have that $\inf A \in A^l=B^l$ and $x \leq \inf A$, for each $x \in B^l$, that is, $\inf A=\inf B$. 
		
		The case in which $A^u=B^u$ and there exists $\sup A$ can be proven analogously.
		\end{proof}
\end{lema}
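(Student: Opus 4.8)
The plan is to prove the statement about infima directly from the definition of infimum as the maximum of the set of lower bounds, exactly as the author's macro-level notation suggests. Suppose $A, B \subseteq X$ satisfy $A^l = B^l$ and assume $\inf A$ exists. By definition, $\inf A = \max A^l$; in particular $\inf A \in A^l$, so $\inf A \in B^l$ as well since the two sets of lower bounds coincide. It remains to check that $\inf A$ is in fact the \emph{maximum} of $B^l$: but for any $x \in B^l = A^l$ we have $x \leq \inf A$ because $\inf A$ is the maximum of $A^l$. Hence $\inf A = \max B^l = \inf B$, and in particular $\inf B$ exists.

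The dual statement, that $A^u = B^u$ together with the existence of $\sup A$ implies the existence of $\sup B$ and $\sup A = \sup B$, follows by the same argument with the order reversed: one uses that $\sup A = \min A^u \in A^u = B^u$ and that every element of $B^u$ is $\geq \sup A$, so $\sup A = \min B^u = \sup B$.

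There is essentially no obstacle here: the whole content is unwinding the definition that $\inf$ is the greatest lower bound, i.e.\ the maximum of $A^l$, which was given just before the lemma. The only point requiring a moment of care is to notice that one must verify \emph{both} that $\inf A$ lies in $B^l$ \emph{and} that it dominates every element of $B^l$, rather than assuming the existence of $\inf B$ from the outset — the lemma asserts that existence, it is not a hypothesis. Since $B^l = A^l$ is used for both of these checks, the argument is symmetric and short, and the supremum case is a verbatim order-dual.
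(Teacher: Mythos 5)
Your argument is correct and is essentially identical to the paper's own proof: both show that $\inf A \in A^l = B^l$ and that every element of $B^l = A^l$ is bounded above by $\inf A$, so $\inf A = \max B^l = \inf B$, with the supremum case handled by order duality. No further comment is needed.
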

\begin{pro}\label{pro:leftboundseq}
	Let $A \subseteq X$ be a nonempty subset such that it does not have a minimum (respectively a maximum), then there exists a sequence $a_n \in A$ such that $a_{n+1}<a_n$, for each $n \in \N$ and $A^l=\{a_n: n \in \N\}^l$ (respectively $a_{n+1}>a_n$, for each $n \in \N$ and $A^u=\{a_n: n \in \N\}^u$).
	\begin{proof}
		Let $D$ be a dense and countable subset of $X$ and consider$D_A=\{d \in D: d \notin A^l\}$. Note that the fact that $d \notin A^l$ is equivalent to the existence of $a \in A$ such that $a<d$. Moreover, $D_A \subseteq D$, so $D_A$ is countable, so we can enumerate it as $D_A=\{d_n: n \in \N\}$. Given $d_1 \in D_A$, there exists $a_1 \in A$ such that $a_1<d_1$. Suppose that $a_n \in A$ is a sequence defined by $a_n <d_n$ and $a_n<a_{n-1}$, for each $n \in \N$. We define $a_{n+1}$ as follows. Since there does not exists the minimum of $A$, we can choose $a \in A$ such that $a<a_n$. Apart from that, there exists $a' \in A$ such that $a'<d_{n+1}$. Hence, if we consider $a_{n+1}=\min\{a, a'\}$, then $a_{n+1}<a_n$ and $a_{n+1}<d_{n+1}$. Recursively, we have defined a sequence $a_n \in A$ such that $a_{n+1}<a_n$ and $a_{n}<d_n$, for each $n \in \N$.
		
		Now we prove that $A^l=\{a_n: n \in \N\}^l$.
		
		$\subseteq)$ This is obvious.
		
		$\supseteq)$ Let $x \in X$ be such that $x \leq a_n$, for each $n \in \N$. Now we prove that $x \leq a$, for each $a \in A$. For that purpose, let $a \in A$. Since there does not exist the minimum of $A$, there exist $a' \in A$ such that $a'<a$ and $a'' \in A$ such that $a''<a'$. Consequently, $]a'',a[$ is a nonempty open set in $X$ with respect to $\tau$, so we can choose $d \in D \cap ]a'',a[$. Hence, $d>a''$, what implies that $d \in D_A$. It follows that there exists $n_0 \in \N$ such that $d=d_{n_0}$. Therefore, $x \leq a_{n_0}<d_{n_0}<a$, what let us conclude that $x \leq a$.
		\end{proof}
\end{pro}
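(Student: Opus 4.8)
The plan is to use the separability of $X$ to replace $A$ by a countable ``skeleton'' with the same lower bounds. Fix a countable dense subset $D$ of $X$ and put $D_A=\{d\in D: d\notin A^l\}$; by the definition of a lower bound, $d\in D_A$ holds exactly when there is some $a\in A$ with $a<d$. The set $D_A$ is countable, being a subset of $D$, and it is nonempty since $A$ has no minimum, so enumerate it as $D_A=\{d_n:n\in\N\}$. The intuition is that any sequence in $A$ that eventually drops below each $d_n$ will have $A^l$ as its set of lower bounds; the only real work is to make such a sequence strictly decreasing.

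First I would construct the sequence recursively. Pick $a_1\in A$ with $a_1<d_1$. Given $a_n\in A$ with $a_n<d_n$ and $a_n<a_{n-1}$, choose $a\in A$ with $a<a_n$ (possible because $A$ has no minimum) and $a'\in A$ with $a'<d_{n+1}$ (possible because $d_{n+1}\in D_A$), and set $a_{n+1}=\min\{a,a'\}$. Then $a_{n+1}\in A$, $a_{n+1}<a_n$ and $a_{n+1}<d_{n+1}$, so recursively we obtain a sequence $a_n\in A$ with $a_{n+1}<a_n$ and $a_n<d_n$ for every $n\in\N$.

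Next I would verify that $A^l=\{a_n:n\in\N\}^l$. The inclusion $\subseteq$ is immediate because each $a_n$ lies in $A$. For $\supseteq$, let $x\in X$ satisfy $x\leq a_n$ for all $n$, and fix an arbitrary $a\in A$; it is enough to show $x\leq a$. Since $A$ has no minimum we may choose $a'\in A$ with $a'<a$ and then $a''\in A$ with $a''<a'$, so $]a'',a[$ is a nonempty open set (it contains $a'$), and by density there is some $d\in D$ with $a''<d<a$. Then $d$ is not a lower bound of $A$, because $a''\in A$ and $a''<d$, so $d=d_{n_0}$ for some $n_0$, whence $x\leq a_{n_0}<d_{n_0}<a$. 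As $a\in A$ was arbitrary, $x\in A^l$.

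Finally, the statement about a set without a maximum follows by reversing the order, i.e.\ applying the case just proved in $(X,\geq)$, or equivalently by running the same argument with $D_A=\{d\in D: d\notin A^u\}$ and an increasing sequence. I expect the delicate point to be the inclusion $A^l\supseteq\{a_n:n\in\N\}^l$: this is where separability is genuinely needed, and the no-minimum hypothesis is used twice --- to produce two elements of $A$ below a given $a$, so that $]a'',a[$ is a nonempty open interval that necessarily catches a point of $D$.
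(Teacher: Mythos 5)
Your proposal is correct and follows essentially the same route as the paper: the same set $D_A=\{d\in D: d\notin A^l\}$, the same recursive construction of $a_{n+1}=\min\{a,a'\}$, and the same density argument for the inclusion $A^l\supseteq\{a_n:n\in\N\}^l$. Your added remarks (why $]a'',a[$ is nonempty, why $D_A\neq\emptyset$) only make explicit points the paper leaves implicit.
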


Convex subsets can be described as countable union of intervals.

\begin{cor}\label{cor:convex}
	Let $A \subseteq X$ be a convex subset. Then it holds that:
	\begin{enumerate}
		\item If there exist both minimum and maximum of $A$, then $A=[\min A, \max A]$.
		\item If there does not exist the minimum of $A$ but it does its maximum, then there exists a decreasing sequence $a_n \in A$ such that $A=\bigcup_{n \in \N} ]a_n, \max A]$.
		\item If there does not exist the maximum of $A$ but it does its minimum, then there exists an increasing sequence $b_n \in A$ such that $A=\bigcup_{n \in \N} [\min A, b_n[$.
		\item If there does not exist the minimum of $A$ nor its maximum, then there exist a decreasing sequence $a_n \in A$ and an increasing one $b_n \in A$ such that $A=\bigcup_{n \in \N} ]a_n, b_n[$.
	\end{enumerate}

\begin{proof}
	\begin{enumerate}
		\item It is clear.
		\item Since $A$ is nonempty and there does not exist the minimum of $A$, by Proposition \ref{pro:leftboundseq}, we can choose a sequence $a_n \in A$ such that $a_{n+1}<a_n$, for each $n \in \N$ and $A^l=\{a_n: n \in \N\}^l$. Now we prove that $A=\bigcup_{n \in \N} ]a_n,\max A]$.
		
		$\subseteq)$ Let $x \in A$. Since $A$ does not have a minimum, then $x \notin A^l$ what implies that $x \notin \{a_n: n \in \N\}^l$. Then there exists $n \in \N$ such that $a_n<x$. Consequently, $x \in \bigcup_{n \in \N} ]a_n, \max A]$.
		
		$\supseteq)$ Let $x \in \bigcup_{n \in \N} ]a_n, \max A]$, then there exists $n \in \N$, such that $a_n<x\leq \max A$. Hence, the fact that $A$ is convex together with the fact that $a_n \in A$ give us that $x \in A$.
		
		\item It can be proven similarly to the previous item.
		
		\item Since $A$ is nonempty and there does not exist the minimum of $A$ nor its maximum, by Proposition \ref{pro:leftboundseq}, we can choose two sequences $a_n, b_n \in A$ such that $a_{n+1}<a_n$ and $b_{n+1}>b_n$, for each $n \in \N$ and $A^l=\{a_n: n \in \N\}^l$, $B^u=\{b_n: n \in \N\}^u$. Now we prove that $A=\bigcup_{n \in \N} ]a_n,b_n[$.
		
		$\subseteq)$ Let $x \in A$. Since $A$ does not have a minimum nor a maximum then $x \notin A^l$ and $x \notin A^u$, what implies that $x \notin \{a_n:n \in \N\}^l$ and $x \notin \{b_n: n \in \N\}^u$, then there exists $n_1 \in \N$ and $n_2 \in \N$ such that $a_{n_1}<x<b_{n_2}$. If we define $n=\max\{n_1, n_2\}$, then it holds that $a_{n}<x<b_n$ and we conclude that $x \in \bigcup_{n \in \N} ]a_n,b_n[$.
		
		$\supseteq)$ Let $x \in \bigcup_{n \in \N} ]a_n, b_n[$, then there exists $n \in \N$, such that $a_n<x<b_n$. Hence, the fact that $A$ is convex together with the fact that $a_n, b_n \in A$ give us that $x \in A$.
		
	\end{enumerate}
\end{proof}
\end{cor}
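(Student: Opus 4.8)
The plan is to treat the four cases separately. Case (1) is immediate from the definition of convexity, while cases (2)--(4) all follow the same template: use Proposition \ref{pro:leftboundseq} to replace the (possibly uncountable) sets of bounds of $A$ by the bounds of a monotone sequence drawn from $A$, and then verify the claimed set equality by a double inclusion, using convexity for the inclusion $\supseteq$.

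For case (1), if $m=\min A$ and $M=\max A$ exist, then $A\subseteq [m,M]$ is trivial, and $[m,M]\subseteq A$ is exactly convexity applied to $m,M\in A$, so $A=[m,M]$. For case (2), since $A$ is nonempty and has no minimum, Proposition \ref{pro:leftboundseq} yields a strictly decreasing sequence $a_n\in A$ with $A^l=\{a_n:n\in\N\}^l$; I would then argue that for $x\in A$ the absence of a minimum forces $x\notin A^l=\{a_n\}^l$, so $a_n<x$ for some $n$, and $x\le \max A$, giving $x\in\bigcup_{n\in\N}]a_n,\max A]$; conversely, if $a_n<x\le\max A$ then $x$ lies between the elements $a_n$ and $\max A$ of $A$, so convexity yields $x\in A$. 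Case (3) is the mirror image, interchanging lower and upper bounds, minimum and maximum, and decreasing and increasing sequences. Case (4) combines the two: when neither extremum exists, applying Proposition \ref{pro:leftboundseq} twice produces a strictly decreasing $a_n\in A$ and a strictly increasing $b_n\in A$ with $A^l=\{a_n\}^l$ and $A^u=\{b_n\}^u$; for $x\in A$ one gets indices $n_1,n_2$ with $a_{n_1}<x<b_{n_2}$, and taking $n=\max\{n_1,n_2\}$ yields $x\in ]a_n,b_n[$, while convexity again handles the reverse inclusion.

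There is no serious obstacle here. The only points that require a little care are: remembering that ``no minimum'' is precisely what guarantees $x\notin A^l$ for $x\in A$ (an element of $A$ that were also a lower bound of $A$ would be its minimum), which is what lets us transfer membership in $A$ into the union; and, in case (4), checking that a single index $n$ works simultaneously for both families, which is exactly where monotonicity of both sequences is used (so that $a_n\le a_{n_1}$ and $b_{n_2}\le b_n$).
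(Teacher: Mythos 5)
Your proposal is correct and follows essentially the same route as the paper: case (1) directly from convexity, and cases (2)--(4) via Proposition \ref{pro:leftboundseq} to obtain monotone sequences in $A$ with the same lower (resp.\ upper) bounds as $A$, followed by a double inclusion in which convexity gives $\supseteq$ and the absence of an extremum gives $\subseteq$. The observations you flag (that ``no minimum'' forces $x\notin A^l$ for $x\in A$, and that monotonicity lets a single index serve both sequences in case (4)) are exactly the points the paper's proof relies on.
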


Similarly, convex open subsets can be described as countable union of open intervals.

\begin{cor}\label{cor:intervals}
	Let $A$ be an open and convex subset of $X$, then $A$ is the countable union of open intervals.
	\begin{proof}
		We distinguish some cases depending on whether there exist the maximum or the minimum of $A$:
		
		\begin{enumerate}
			\item Suppose that there does not exist the maximum of $A$ nor its minimum, then by Corollary \ref{cor:convex}.4, it holds that $A$ can be written as the countable union of open intervals.
			\item Suppose that there does not exist the minimum of $A$ but it does its maximum. By the previous corollary, it holds that $A=\bigcup_{n \in \N} ]a_n, \max A]$. Now note that the fact that $A$ is open means that $\max A$ is right-isolated so we can write $A=\bigcup_{n \in \N} ]a_n, b[$, where $b$ is the following point to $\max A$. Hence, $A$ is the countable union of open intervals.
			\item If there exists the minimum of $A$ but not its maximum, we can proceed anaologously to claim that $A=\bigcup_{n \in \N}]a,b_n[$ where $a$ is the previous point to $\min A$ and $b_n$ is an increasing sequence in $A$.
			\item If there exists both minimum and maximum of $A$, then $A=]a,b[$ where $a$ is the previous point to $\min A$ and $b$ is the following one to $\max A$.
		\end{enumerate}
		\end{proof}
\end{cor}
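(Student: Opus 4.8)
The plan is to case-split exactly as in Corollary~\ref{cor:convex}, according to whether $A$ possesses a minimum and/or a maximum, and to invoke the openness of $A$ only in order to deal with whichever of these extremal points actually lies in $A$.

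If $A$ has neither a minimum nor a maximum, Corollary~\ref{cor:convex}.4 already expresses $A=\bigcup_{n\in\N}\,]a_n,b_n[$, a countable union of open intervals, and there is nothing left to do. Suppose instead that $A$ has no minimum but has a maximum $m=\max A$, so that $A=\bigcup_{n\in\N}\,]a_n,m]$ by Corollary~\ref{cor:convex}.2. Since $m\in A$ and $A$ is open, some basic open set $]c,d[$ (possibly with $d=\infty$) satisfies $m\in\,]c,d[\,\subseteq A$; as $m=\max A$, the set $]m,d[$ must be empty (or else $(>m)=\emptyset$), i.e.\ $m$ is right-isolated. Writing $b$ for the immediate successor of $m$ (with $b=\infty$ when $(>m)=\emptyset$) we obtain $]a_n,m]=\,]a_n,b[$ for every $n$, hence $A=\bigcup_{n\in\N}\,]a_n,b[$. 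The case ``$A$ has a minimum but no maximum'' is symmetric, using Corollary~\ref{cor:convex}.3 together with the left-isolation of $\min A$. Finally, if $A$ has both a minimum and a maximum, then $A=[\min A,\max A]$ by Corollary~\ref{cor:convex}.1, and the same openness argument applied at each end shows that $\min A$ is left-isolated and $\max A$ is right-isolated, whence $A=\,]a,b[$ with $a$ the predecessor of $\min A$ and $b$ the successor of $\max A$ (interpreting these as $-\infty$, $\infty$ in the degenerate subcases).

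The only ingredient beyond Corollary~\ref{cor:convex} is the remark that openness of $A$ forces an endpoint of $A$ that belongs to $A$ to be isolated on the relevant side; this is immediate from the description of basic open sets in the order topology (Remark~\ref{ob:base}), so I do not anticipate any genuine obstacle. In effect the corollary is simply the cosmetic step of trading the half-open or closed endpoints coming out of Corollary~\ref{cor:convex} for the adjacent points of $X$, once openness has been fed in.
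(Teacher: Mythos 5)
Your proof is correct and follows essentially the same route as the paper: the same four-way case split on the existence of $\min A$ and $\max A$, an appeal to Corollary~\ref{cor:convex} in each case, and the observation that openness forces an endpoint belonging to $A$ to be isolated on the outward side, so the half-open or closed intervals can be rewritten with the adjacent point (or $\pm\infty$) as an open endpoint. You are in fact slightly more careful than the paper in justifying the isolation step via a basic open set and in handling the degenerate subcases where the successor or predecessor must be taken to be $\infty$ or $-\infty$.
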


Next, we prove that a separable LOTS is first countable.

\begin{pro}\label{pro:ANI}
	$\tau$ is first countable.
	\begin{proof}
		Since $X$ is separable with respect to the topology $\tau$, there exists a countable dense subset $D$ of $X$. Now we prove that, given $x \in X$, each of the countable families
		\begin{itemize}
			\item[-] $\mathcal{B}_{x}=\{\{x\}\}$ if $x$ is isolated.
			\item[-] $\mathcal{B}_{x}=\{]a,b[: a<x<b, a, b \in D\}$, if $x$ is not left-isolated nor right-isolated.
			\item[-] $\mathcal{B}_{x}=\{[x, b[: x<b, b \in D\}$ if $x$ is left-isolated but it is not right-isolated.
			\item[-] $\mathcal{B}_{x}=\{]a,x]: a<x, a \in D\}$ if $x$ is right-isolated but it is not left-isolated.
		\end{itemize}
		is a countable neighborhood basis of $x$ with respect to the topology $\tau$. For that purpose we prove the next two items:
		\begin{itemize}
			\item Each element of $\mathcal{B}_{x}$ is a neighborhood of $x$, for each $x \in X$. This is clear if we take into account that each element in $\B_x$ is an open set with respect to the topology $\tau$ (see Remark \ref{ob:base}). Indeed, if $x$ is left-isolated then, given $B \in \mathcal{B}_x$, we can write $B=[x,b[$, for some $b \in D$ with $b>x$. Equivalently, $B=]a,b[$, where $a$ is the previous point to $x$ according to the order. The other cases are similar.
			\item For each neighborhood of $x$, $U$, there exists $B \in \mathcal{B}_x$ such that $B \subseteq U$. Indeed, let $U$ be a neighborhood of $x$, then there exists an open set $G$ such that $x \in G \subseteq U$. Since $G$ is open and $\mathcal{B}=\{]a,b[:a<b\}$ is an open basis, we can consider $a, b$ such that $]a,b[ \subseteq G$ and $a<x<b$. Now we distinguish some cases depending on whether $x$ is isolated or not:
			
			\begin{itemize}
				\item[-] Suppose that $x$ is isolated, then there exist $y, z \in X$ such that $y<x<z$ and $]y,z[=\{x\}$. In this case $\{x\}$ is an element of $\B_x$ which is contained in $U$.
				\item[-] Suppose that $x$ is not left-isolated nor right-isolated. Since $]a,x[$ and $]x,b[$ are both open in $\tau$ and $D$ is dense in $\tau$, we can choose $c \in ]a,x[ \cap D$ and $d \in ]x,b[\cap D$. What is more, it holds that $x \in ]c,d[ \subseteq ]a,b[ \subseteq G \subseteq U$, what finishes the proof.
				\item[-] Suppose that $x$ is left-isolated but it is not right-isolated. Then there exists $y \in X$ such that $]y,x[=\emptyset$ and $]x,z[ \neq \emptyset$ for each $z>x$. Since $]x,b[$ is open in $\tau$ and $D$ is dense in $\tau$, we can choose $d \in ]x,b[\cap D$. What is more, it holds that $x \in [x,d[ \subseteq ]a,b[ \subseteq G \subseteq U$.
				\item[-] Suppose that $x$ is not letf-isolated but it is right-isolated. Then there exists $z \in X$ such that $]x,z[=\emptyset$ and $]y,x[ \neq \emptyset$ for each $y<x$. Since $]a,x[$ is a neighborhood in $\tau$ and $D$ is dense in $\tau$, we can choose $c \in ]a,x[\cap D$. What is more, it holds that $x \in ]c,x] \subseteq ]a,b[ \subseteq G \subseteq U$.
			\end{itemize}
		\end{itemize}
	\end{proof}
\end{pro}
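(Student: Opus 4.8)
The plan is to exhibit, for each point $x \in X$, an explicit countable neighborhood basis built from a fixed countable dense subset, with the construction depending on whether $x$ is isolated on the left, on the right, on both sides, or on neither.

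First I would fix a countable dense subset $D \subseteq X$, which exists since $X$ is separable, and propose as local basis at $x$: the family $\{\{x\}\}$ if $x$ is isolated; the family $\{\,]a,b[\,:\,a,b\in D,\ a<x<b\,\}$ if $x$ is neither left- nor right-isolated; the family $\{\,[x,b[\,:\,b\in D,\ b>x\,\}$ if $x$ is left-isolated but not right-isolated; and symmetrically $\{\,]a,x]\,:\,a\in D,\ a<x\,\}$ in the remaining case. Each is countable because $D$ is.

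Then I would verify two things. \emph{(i) Every member of the proposed family is a neighborhood of $x$.} For the open-interval and singleton cases this is immediate from Remark \ref{ob:base} (an isolated point is open since $]y,z[=\{x\}$ for suitable $y<x<z$). For the half-open cases one invokes the definition of one-sided isolation: if $x$ is left-isolated then either $(<x)=\emptyset$, so $[x,b[\,=\,(<b)$ is subbasic open, or $]z,x[\,=\,\emptyset$ for some $z$, so $[x,b[\,=\,]z,b[$ is basic open; symmetrically on the right. \emph{(ii) Every neighborhood $U$ of $x$ contains some member of the family.} Here I would first shrink $U$: since the sets $]a,b[$ (allowing $a=-\infty$, $b=\infty$) form an open basis, pick $a<x<b$ with $]a,b[\,\subseteq U$. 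If $x$ is isolated, $\{x\}\subseteq U$ and we are done. If $x$ is not left-isolated then $]a,x[$ is a nonempty open set, so density of $D$ yields $c\in D\cap\,]a,x[$; likewise if $x$ is not right-isolated we obtain $d\in D\cap\,]x,b[$. Assembling these endpoints on whichever side(s) $x$ fails to be isolated, and using $a<x$ (resp.\ $x<b$) directly on the isolated side, produces a member of the proposed family sandwiched between $x$ and $]a,b[\,\subseteq U$.

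The bulk of the work is this routine case analysis. The one place that needs care is step (i) in the half-open cases: checking that $[x,b[$ and $]a,x]$ are genuinely open requires the definition of left/right-isolated point, and one must remember to treat separately the subcase where $x$ is an extremum of $X$, so that $(<x)$ or $(>x)$ is empty. Beyond that, the argument is a direct application of the density of $D$ together with the description of the order-topology basis recorded in Remark \ref{ob:base}.
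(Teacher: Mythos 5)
Your proposal is correct and follows essentially the same route as the paper: fix a countable dense subset, propose the same four candidate local bases according to the left/right isolation of $x$, and verify the two neighborhood-basis conditions by the same case analysis (your explicit treatment of the subcase $(<x)=\emptyset$ in step (i) is in fact slightly more careful than the paper's). No substantive difference to report.
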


We can choose a countable neighborhood basis of each point such that its elements are ordered, as next remark shows:

\begin{ob}
	Let $x \in X$, then there exists a  countable neighborhood basis of $x$, $\mathcal{B}_x'=\{]a_n',b_n'[: a_n'<x<b_n'; n \in \N\}$ such that $a_n$ is a non-decreasing sequence and $b_n$ a non-increasing one. 
	\begin{proof}
		
		Indeed, since $\tau$ is first countable, there exists a countable basis of each point. According to the previous proposition, in case that $x$ is not left-isolated nor right-isolated, we have that $\mathcal{B}_x=\{]a_1, b_1[: a_1<x<b_1; a_1 , b_1 \in D\}$ is a countable basis of $x$. Since $D$ is a dense subset in $\tau$ and $]x,b_1[$ and $]a_1,x[$ are nonempty open sets in $\tau$, there exists $d_{a_1} \in D \cap ]a_1,x[$ and $d_{b_1} \in ]x,b_1[ \cap D$. Now define $a_2=d_{a_1}$ and $b_2=d_{b_1}$. Moreover, there exists $d_{a_2} \in D \cap ]a_2, x[$ and $d_{b_2} \in D \cap ]x,b_2[$. Now we define $a_3=d_{a_2}$ and $b_3=d_{b_2}$. Recursively we have that $\mathcal{B}_x'=\{]a_n,b_n[: a_n<x<b_n; n \in \N, a_n, b_n \in D\}$ where $a_n=d_{a_{n-1}}$ and $b_n=d_{b_{n-1}}$. It is clear that $\mathcal{B}_x$ is a neighborhood basis of $x$. Moreover, given $n \in \N$ it holds that $]a_{n+1},b_{n+1}[ \subseteq ]a_n,b_n[$ by the definition of $a_n'$ and $b_n'$. We can proceed analogously to get basis for the right-isolated or left-isolated points. Moreover, note that if $x$ is isolated, the basis given in the previous proposition satisfies the condition given in this remark.
	\end{proof}
\end{ob}

There exists an equivalence between the property of second countable for $\tau$ and the countability of the set of isolated points.

\begin{pro}\label{pro:ANII}
	Let $X$ be a LOTS. $X$ is second countable with respect to the topology $\tau$ if, and only if $X$ is separable and the set of right-isolated or left-isolated points is countable.
	\begin{proof}
		
		Let us define $C_1$ and $C_2$ to be, respectively, the set of left-isolated points and the set of right-isolated points.
		
		$\Leftarrow)$ Let $D$ be a countable dense subset of $X$ and suppose that $C_1$ and $C_2$ are countable subsets. Consider the family $\B=\{\{x\}: x \in C_1 \cap C_2\} \cup \{]a,x]: a<x, x \in C_2, a \in D\}\cup \{[x,b[: x<b, x \in C_1, b \in D\} \cup \{]a,b[: a<b,a, b \in D\}$ and note that it is an open basis of $X$ with respect to $\tau$. What is more, the countability of the set of right-isolated and left-isolated points gives us that $\B$ is countable. Hence $\tau$ is second countable.
		
		$\Rightarrow)$ Suppose that $X$ is second countable with respect to $\tau$, then there exists a countable open basis, $\mathcal{B}=\{U_n: n \in \N\}$. Since second countable spaces are separable, we only have to prove that $C_1$ and $C_2$ are countable subsets, what gives us that $C_1 \cup C_2$ is also countable.
		
		\begin{itemize}
			\item $C_1$ is countable: let $x \in C_1$ and $b_1>x$ with $b_1 \in D$. Since $\mathcal{B}$ is an open basis and $[x,b_1[$ is an open set containing $x$, there exists $n_x \in \N$ such that $x \in U_{n_x} \subseteq [x,b_1[$. Now let $y \in C_1$ with $y \neq x$ and $b_2 \in D$ with $y<b_2$, then there exists $n_y \in \N$ such that $y \in U_{n_y} \subseteq [y, b_2[$ for $b_2>y$. Consequently, $f: C_1 \rightarrow \N$ given by $f(x)=n_x$ is an injective function, what proves the countability of $C_1$.
			
			\item The countability of $C_2$ can be proved similarly to the countability of $C_1$.
			\end{itemize}
	\end{proof}
\end{pro}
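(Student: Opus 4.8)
The plan is to prove the two implications separately, reusing the explicit neighborhood bases produced in Proposition~\ref{pro:ANI}.

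For ($\Leftarrow$), suppose $D$ is a countable dense subset of $X$ and write $C_1$, $C_2$ for the sets of left-isolated and right-isolated points, assumed countable. I would take as candidate basis the family
\[
\B=\{\{x\}: x\in C_1\cap C_2\}\cup\{\,]a,x]: a\in D,\ a<x,\ x\in C_2\,\}\cup\{\,[x,b[\,: b\in D,\ x<b,\ x\in C_1\,\}\cup\{\,]a,b[\,: a,b\in D,\ a<b\,\},
\]
which is countable since $D$, $C_1$, $C_2$ are. Each member of $\B$ is $\tau$-open: for $]a,b[$ this is immediate (Remark~\ref{ob:base}), and for the singletons and half-open members it is precisely the observation used in Proposition~\ref{pro:ANI} that, at a left-isolated point $x$, $[x,b[$ equals $]z,b[$ for the immediate predecessor $z$ (or $(<b)$ when $x=\min X$), and symmetrically for right-isolated points. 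That $\B$ is a basis then follows directly from Proposition~\ref{pro:ANI}: given an open $G$ and $x\in G$, that proposition supplies a neighborhood basis $\mathcal{B}_x$ at $x$ all of whose members are of one of the four listed shapes with endpoints in $D$ (or a singleton), hence $\mathcal{B}_x\subseteq\B$, and one of its members sits inside $G$. So $\tau$ has a countable basis.

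For ($\Rightarrow$), fix a countable basis $\mathcal{B}=\{U_n: n\in\N\}$. Since second countable spaces are separable, it remains only to bound $C_1$ and $C_2$; I would treat $C_1$, the argument for $C_2$ being the mirror image. The points $\min X$ and $\max X$, when they exist, contribute at most two elements and may be set aside, so let $x\in C_1$ with $x\neq\max X$. Then $(>x)$ is a nonempty open set, so density yields $b_x\in D$ with $b_x>x$; since $x$ is left-isolated, $[x,b_x[$ is an open neighborhood of $x$, and therefore there is $n_x\in\N$ with $x\in U_{n_x}\subseteq[x,b_x[$. Every point of $U_{n_x}$ is then $\geq x$, and $x\in U_{n_x}$, so $x=\min U_{n_x}$. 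Hence $x\mapsto n_x$ is injective (if $n_x=n_y$ then $x=\min U_{n_x}=\min U_{n_y}=y$), proving $C_1$ countable; the symmetric tagging by maxima handles $C_2$, and therefore $C_1\cup C_2$ is countable.

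I expect the only delicate points to be bookkeeping rather than conceptual: one must be sure the half-open tags $[x,b_x[$ and $]a_x,x]$ are genuinely $\tau$-open, which is exactly where left-/right-isolation enters, and one must quarantine the global extrema $\min X$ and $\max X$ so that the map into $\N$ is defined on all but at most finitely many isolated points. With those handled, both directions are short consequences of the structure of neighborhood bases already established in Proposition~\ref{pro:ANI}.
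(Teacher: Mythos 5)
Your proposal is correct and follows essentially the same route as the paper: the same countable family $\B$ built from $D$, $C_1$, $C_2$ for the backward direction, and the same tagging of each left-isolated $x$ by a basic open set $U_{n_x}\subseteq[x,b_x[$ for the forward direction. You are in fact slightly more careful than the paper, since you make explicit that $x=\min U_{n_x}$ (which is what forces injectivity) and you set aside $\max X$ before choosing $b_x\in D$ with $b_x>x$, a case the paper's argument glosses over.
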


Now we define the concept of right convergent and left convergent sequence.

\begin{defi}
	Let $x \in \X$ and $\nu$ be a topology defined on $\X$. We say that a sequence $x_n \in \X$ is right $\nu$-convergent (respectively left $\nu$-convergent) to $x$ if $x_n \stackrel{\nu}{\rightarrow} x$ and $x_n \geq x$ (respectively $x_n \leq x$), for each $n \in \N$.
\end{defi}

Now we define the concept of monotonically right convergent and monotonically left convergent sequence.

\begin{defi}
	Let $x \in \X$ and $\nu$ be a topology defined on $\X$. We say that a sequence $x_n \in \X$ is monotonically right $\nu$-convergent (respectively monotonically left $\nu$-convergent) to $x$ if $x_n \stackrel{\nu}{\rightarrow} x$ and $x<x_{n+1} < x_{n}$ (respectively $x_n < x_{n+1} < x$), for each $n \in \N$.
\end{defi}

\begin{pro}\label{pro:isolated}
	Let $x \in X$. Then $x$ is not left-isolated (respectively right-isolated) if, and only if there exists a monotonically left $\tau$-convergent (respectively monotonically right $\tau$-convergent) to $x$ sequence.
	\begin{proof}
		$\Rightarrow)$ Let $x$ be a non left-isolated point, then $x \neq \min X$. Since $\tau$ is first countable (by Proposition \ref{pro:ANI}), we can consider a countable neighborhood basis of $x$, $\mathcal{B}_{x}=\{]a_n, b_n[: n \in \N\}$. Now let $a , b \in X$ be such that $a<x<b$, then there exists $n_1 \in \N$ such that $a \leq a_{n_1}<x$ due to the fact that $\mathcal{B}_x$ is a neighborhood basis of $x$. Since $x$ is not left-isolated, we can choose $z_1 \in ]a_{n_1},x[$. Now we can consider $n_2 \in \N$ such that $z_1 \leq a_{n_2} <x$ due to the fact that $\mathcal{B}_x$ is a neighborhood basis of $x$. Recursively, we can construct a subsequence of $a_n$, $a_{\sigma(n)}$, such that $a_{\sigma(n)}<a_{\sigma(n+1)}<x$ and $a_{\sigma(n)} \rightarrow x$, that is, $a_{\sigma(n)}$ is a monotonically left $\tau$-convergent sequence to $x$.
		
		The proof is analogous in case that $x$ is not right-isolated.
		
		$\Leftarrow)$ Let $x \in \X$ and suppose that it is a left-isolated point. If $x=\min X$ the proof is easy. Suppose that $x \neq \min X$, then there exists $z \in X$ such that $]z,x[= \emptyset$. Suppose that there exists a monotonically left $\tau$-convergent sequence to $x$, $x_n$, then it holds that there exists $n_0 \in \N$ such that $x_n >z$, for each $n \geq n_0$. Moreover, since $x_n<x$, we have that $x_n \in ]z,x[=\emptyset$, what is a contradiction. Hence, $x$ is not left-isolated.
		
		The case in which there exists a monotonically right $\tau$-convergent sequence to $x$ can be proven analogously.
	\end{proof}
\end{pro}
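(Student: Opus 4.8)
The plan is to prove the two implications separately and to treat only the ``not left-isolated'' case; the ``not right-isolated'' case is obtained by reversing the order (or by applying the left case to the reverse LOTS). Recall that, by our convention on the definition, $x$ fails to be left-isolated precisely when $x\neq\min X$ and $]c,x[\neq\emptyset$ for every $c<x$; this nonemptiness is the only combinatorial input the construction will use.

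For the direction ($\Rightarrow$), assume $x$ is not left-isolated. By Proposition \ref{pro:ANI} the space $X$ is first countable, so $x$ has a countable neighbourhood basis; replacing its $n$-th member by the intersection of the first $n$ members, we may fix a \emph{nested} countable neighbourhood basis $\{U_n:n\in\N\}$ of $x$ with $U_{n+1}\subseteq U_n$. Since each $U_n$ is a neighbourhood of $x$ and $\{\,]a,b[\,\}$ is an open basis of $\tau$ (Remark \ref{ob:base}), choose for each $n$ a basic open set with $x\in\,]c_n,d_n[\,\subseteq U_n$, so $c_n<x<d_n$. Now build the sequence recursively: pick $x_1\in\,]c_1,x[$ (nonempty because $c_1<x$ and $x$ is not left-isolated); given $x_n<x$, note $\max\{c_{n+1},x_n\}<x$, hence $]\max\{c_{n+1},x_n\},\,x[$ is nonempty and we may choose $x_{n+1}$ in it. Then $x_n<x_{n+1}<x$ for every $n$, and $x_n\in\,]c_n,x[\,\subseteq\,]c_n,d_n[\,\subseteq U_n$. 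Given any neighbourhood $V$ of $x$, pick $N$ with $U_N\subseteq V$; for $m\geq N$ the nesting gives $x_m\in U_m\subseteq U_N\subseteq V$, so $x_n\stackrel{\tau}{\rightarrow}x$. Thus $(x_n)$ is monotonically left $\tau$-convergent to $x$.

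For the direction ($\Leftarrow$), we argue by contraposition: assume $x$ is left-isolated. If $x=\min X$ there is no element below $x$ at all, hence no sequence with $x_n<x$, so certainly no monotonically left $\tau$-convergent sequence to $x$. Otherwise there is $z\in X$ with $z<x$ and $]z,x[=\emptyset$. If some $(x_n)$ were monotonically left $\tau$-convergent to $x$, then, $(>z)$ being an open neighbourhood of $x$ and $x_n\to x$, there would be $n_0$ with $x_n>z$ for all $n\geq n_0$; combined with $x_n<x$ this gives $x_{n_0}\in\,]z,x[\,=\emptyset$, a contradiction. Hence no such sequence exists, which is what we wanted.

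The one delicate point is the forward construction: a sequence that is merely strictly increasing and bounded above by $x$ need not converge to $x$ (it could accumulate strictly below $x$), so it is essential to pin each $x_n$ inside the $n$-th neighbourhood of a nested countable neighbourhood basis. First countability (Proposition \ref{pro:ANI}) is exactly what makes this possible; the rest is a routine recursion relying on $]c,x[\neq\emptyset$ for every $c<x$.
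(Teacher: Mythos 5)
Your proof is correct and follows essentially the same route as the paper: first countability (Proposition \ref{pro:ANI}) plus a recursion that uses $]c,x[\neq\emptyset$ for all $c<x$ to build a strictly increasing sequence trapped in shrinking basic neighbourhoods, with the same contrapositive argument for the converse. Your use of a nested neighbourhood basis makes the convergence step a bit more explicit than the paper's subsequence extraction, but the idea is the same.
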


\begin{lema}\label{lema:infsup}
	\begin{enumerate}
		\item If $a_n$ is a monotonically left $\tau$-convergent sequence to $a$, then $\cup(<a_n)=(<a)=\cup(\leq a_n)$.
		\item If $a_n$ is a monotonically right $\tau$-convergent sequence to $a$, then $\cap(<a_n)=(\leq a)=\cap(\leq a_n)$.
	\end{enumerate}
	
	\begin{proof}
		
		\begin{enumerate}
			\item Next we prove both equalities:
			
			\begin{itemize}
				\item $\cup (< a_n)=(<a)$. On the one hand, since $a_n < a$, we have that $(<a_n) \subseteq (<a)$. Therefore, $\cup (< a_n) \subseteq (<a)$.
				
				On the other hand, let $x <a$. Since $a_n \stackrel{\tau}{\rightarrow} a$ and $a_n<a$, there exists $n \in \N$ such that $x <a_n<a$ and, hence, $x \in \cup(<a_n)$.
				
				\item $\cup(<a_n)=\cup (\leq a_n)$. On the one hand, let $x \in \cup(< a_n)$, then there exists $n \in \N$ such that $x \in (<a_n)$. It is clear that $x \in (\leq a_n)$ and, hence, $x \in \cup (\leq a_n)$.
				
				On the other hand, let $x \in \cup (\leq a_n)$, then there exists $n \in \N$ such that $x \in (\leq a_n)$. Since $a_n < a$ and $a_n \stackrel{\tau}{\rightarrow} a$, it holds that there exists $m >n$ such that $a_n <a_m<a$, The fact that $x \in (\leq a_n)$ gives us that $x \in (<a_m)$. We conclude that $x \in \cup (<a_n)$.
			\end{itemize}
			\item Next we prove both equalities:
			
			On the one hand, let $x \leq a_n$, for each $n \in \N$ and suppose that $x>a$, then there exists $m \in \N$ such that $a<a_m<x$, what is a contradiction with the fact that $x \leq a_n$, for each $n \in \N$. Hence, $x \leq a$ and $\cap (\leq a_n) \subseteq (\leq a)$. 
			
			Moreover, since $a<a_n$ for each $n \in \N$, we have that $(\leq a) \subseteq (<a_n)$. Therefore $(\leq a) \subseteq \cap(<a_n)$. 
			
			What is more, it is clear that $(<a_n) \subseteq (\leq a_n)$, so we conclude that $\cap (< a_n) \subseteq \cap (\leq a_n)$ and we finish the proof.
		\end{enumerate}
	\end{proof}
\end{lema}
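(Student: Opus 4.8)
The plan is to establish each of the set inclusions making up the two chains of equalities directly from the defining properties of a monotonically one-sided $\tau$-convergent sequence, namely strict monotonicity of $a_n$ together with $a_n \stackrel{\tau}{\rightarrow} a$, and from the fact (Remark \ref{ob:base}) that every ray $(<x)$ and $(>x)$ is $\tau$-open. So the whole argument reduces to a few easy inclusions, each proved either by monotonicity of the rays or by a one-line use of convergence.

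For item (1), I would first record that $\cup(<a_n)\subseteq(<a)$, which is immediate because $a_n<a$ forces $(<a_n)\subseteq(<a)$ for each $n$. For the opposite inclusion, given $x<a$ I would note that $(>x)$ is a $\tau$-open neighbourhood of $a$, so by convergence there is $n$ with $a_n\in(>x)$; then $x<a_n<a$, hence $x\in\cup(<a_n)$. This yields $\cup(<a_n)=(<a)$. For the equality $\cup(<a_n)=\cup(\leq a_n)$, the inclusion $\subseteq$ is trivial, and for $\supseteq$ I would use that if $x\leq a_n$ then strict monotonicity gives $x\leq a_n<a_{n+1}$, so $x\in(<a_{n+1})\subseteq\cup(<a_m)$.

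For item (2) I would run the analogous argument ``from the outside''. The inclusions $(\leq a)\subseteq\cap(<a_n)$ (because $a<a_n$ for all $n$) and $\cap(<a_n)\subseteq\cap(\leq a_n)$ are immediate, so it suffices to prove $\cap(\leq a_n)\subseteq(\leq a)$. For this I would assume $x\leq a_n$ for every $n$ and suppose, for contradiction, that $x>a$; then $(<x)$ is a $\tau$-open neighbourhood of $a$, so $a_n\in(<x)$ for some $n$, that is, $a_n<x$, contradicting $x\leq a_n$. Chaining $(\leq a)\subseteq\cap(<a_n)\subseteq\cap(\leq a_n)\subseteq(\leq a)$ gives the three sets equal.

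I do not anticipate a real obstacle here; the only points requiring care are (i) using $\tau$-convergence through the open rays $(>x)$ and $(<x)$ as neighbourhoods of the limit, rather than the order relation alone, and (ii) exploiting that the monotonicity of $a_n$ is strict, so that a weak inequality $x\leq a_n$ can be upgraded to a strict one $x<a_{n+1}$. Proposition \ref{pro:isolated} is not needed for the proof itself, since the hypotheses already furnish the sequence, but it is what guarantees that such sequences exist exactly when $a$ fails to be one-sided isolated.
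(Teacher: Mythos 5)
Your proof is correct and follows essentially the same route as the paper's: the same easy inclusions, with the non-trivial ones obtained by applying $\tau$-convergence through the open rays $(>x)$ and $(<x)$ as neighbourhoods of $a$. The only (harmless) difference is in showing $\cup(\leq a_n)\subseteq\cup(<a_n)$, where you use the strict monotonicity $a_n<a_{n+1}$ directly instead of invoking convergence again to produce some $a_m$ with $a_n<a_m<a$; both work.
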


\begin{pro}
	Each connected set in $\tau$ is convex. 
	\begin{proof}
		Let $A \subseteq \X$ be a connected set. Suppose that $A$ is not convex, what means that there exist $a, b \in A$ with $a<b$ such that there exists $x \in \X \setminus A$ with $a<x<b$. Note that $(<x)$ and $(>x)$ are both open sets in $\tau$, what implies that $U=(<x) \cap A$ and $V=(>x) \cap A$ are both open in $A$ with the topology induced by $\tau$ in $A$. Note that $U, V \neq \emptyset$ since $a \in U, b \in V$ and $U \cup V=A$ what implies that $A$ is not connected. Hence $A$ is convex.
	\end{proof}
\end{pro}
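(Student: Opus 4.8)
The plan is to argue by contradiction, exploiting the elementary fact that deleting a point from a linearly ordered set splits it into the two open rays lying strictly below and strictly above that point. So I would begin by supposing that $A \subseteq \X$ is connected but \emph{not} convex. Unwinding the definition of convexity, this means there are points $a, b \in A$ with $a < b$ for which some $x \in \X \setminus A$ satisfies $a < x < b$. The goal is to manufacture a separation of $A$ from this witness $x$.

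The key observation is that $(<x)$ and $(>x)$ are open in $\tau$, since they are precisely the two types of subbasic sets in the definition of the order topology. Consequently $U := A \cap (<x)$ and $V := A \cap (>x)$ are open in the subspace topology $A$ inherits from $\tau$. I would then verify the three defining properties of a separation: $U$ and $V$ are disjoint because the rays $(<x)$ and $(>x)$ are disjoint; they are nonempty because $a \in U$ and $b \in V$; and $U \cup V = A$ because, by linearity of the order, every point of $\X$ is comparable with $x$, and the only point that fails to lie in $(<x) \cup (>x)$ is $x$ itself, which by hypothesis does not belong to $A$. Hence $\{U,V\}$ is a genuine partition of $A$ into two nonempty relatively open sets.

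This directly contradicts the connectedness of $A$, so $A$ must be convex. I do not anticipate any real obstacle: the argument is a one-shot separation construction. The only points that merit a moment's care are the explicit appeal to linearity of the order when concluding $A \subseteq (<x) \cup (>x)$, and the fact that this step genuinely needs $x \notin A$ (otherwise $x$ itself would belong to neither ray). As a byproduct of the same reasoning one sees that, conversely, a convex set need not be connected — but that direction is not claimed here, so I would not pursue it.
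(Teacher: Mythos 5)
Your argument is correct and is essentially identical to the paper's proof: both assume non-convexity, take a witness $x \in \X \setminus A$ with $a < x < b$, and separate $A$ by the relatively open sets $A \cap (<x)$ and $A \cap (>x)$ to contradict connectedness. Your write-up is slightly more explicit about disjointness and about why $U \cup V = A$ requires $x \notin A$, but there is no substantive difference.
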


\section{Defining the distribution function}

The definition of the cumulative distribution function of a measure defined on the Borel $\sigma$-algebra of $\X$ is the next one:

\begin{defi}
	The cumulative distribution function (in short, cdf) of a probability measure $\mu$ is a function $F: \X \to [0,1]$ defined by $F(x)=\mu(\leq x)$. 
\end{defi}

\begin{lema}\label{lemaright}
	Let $\tau'$ be a first countable topology on $X$ such that $\tau \subseteq \tau'$. Let $f: X \rightarrow [0,1]$ be a monotonically non-decreasing function and $x \in \X$ and suppose that $f(x_n) \rightarrow f(x)$ for each monotonically right $\tau'$-convergent (respectively monotonically left $\tau'$-convergent) sequence to $x$, then $f$ is right $\tau'$-continuous (respectively $f$ is left $\tau'$-continuous).
	
	\begin{proof}
		Let $x \in \X$ and $x_n \stackrel{\tau'}{\rightarrow}x$ be a right $\tau'$-convergent sequence. If $(x_n)$ is eventually constant (there exists $k \in \N$ such that $x_n=x$ for each $n \geq k$), the proof is easy. Otherwise, using that $\tau \subseteq \tau'$, we can recursively define a decreasing subsequence $x_{\sigma(n)}$ of $x_n$, such that $x < x_{\sigma(n+1)} < x_{\sigma(n)}$, for each $n \in \N$.
		
		It follows that $x_{\sigma(n)}$ is monotonically right $\tau'$-convergent to $x$ and, hence, by hypothesis, $f(x_{\sigma(n)}) \rightarrow f(x)$. 
		
		Given $k \in \N$, we have that $x<x_{\sigma(k)}$. Since $\tau \subseteq \tau'$ it follows that $x_n \stackrel{\tau}{\rightarrow}x$ what gives us that there exists $n_0 \in \N$, such that $x \leq x_n <x_{\sigma(k)}$, for each $n \geq n_0$.

		Now, by the monotonicity of $f$, it follows that $f(x) \leq f(x_n) \leq f(x_{\sigma(k)})$, for each $n \geq n_0$. We conclude that $f(x_n) \rightarrow f(x)$ and, hence, $f$ is right $\tau'$-continuous.
		
		We can proceed analogously to show that $f$ is left $\tau'$-continuous when $x_n$ is left $\tau'$-convergent to $x$.
	\end{proof}
\end{lema}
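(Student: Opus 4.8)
The plan is to establish the sequential form of right $\tau'$-continuity at $x$: since $\tau'$ is first countable it is enough to show that $f(x_n)\to f(x)$ whenever $x_n\stackrel{\tau'}{\rightarrow}x$ with $x_n\geq x$ for all $n$, and the parenthetical left-continuity statement will be obtained by the mirror-image argument.

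First I would dispose of the trivial case in which $x_n$ is eventually equal to $x$, where there is nothing to prove. Otherwise infinitely many terms satisfy $x_n>x$, and the crux is to extract from them a subsequence $x_{\sigma(n)}$ that is strictly decreasing and stays above $x$, i.e.\ $x<x_{\sigma(n+1)}<x_{\sigma(n)}$ for every $n$. Here the assumption $\tau\subseteq\tau'$ is used: it makes $x_n\stackrel{\tau'}{\rightarrow}x$ imply $x_n\stackrel{\tau}{\rightarrow}x$, and once a term $x_{\sigma(k)}>x$ has been chosen, the set $(<x_{\sigma(k)})$ is a $\tau$-open neighbourhood of $x$, so all but finitely many terms of the sequence lie strictly below $x_{\sigma(k)}$; among those one may still pick one lying strictly above $x$ (there are infinitely many such) with a larger index, which continues the recursion. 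The resulting $x_{\sigma(n)}$ is a monotonically right $\tau'$-convergent sequence to $x$, so the hypothesis yields $f(x_{\sigma(n)})\to f(x)$.

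The last step is a squeeze that transfers this back to the full sequence. Fix $k\in\N$. Because $x<x_{\sigma(k)}$ and $x_n\stackrel{\tau}{\rightarrow}x$, there is $n_0$ with $x\leq x_n<x_{\sigma(k)}$ for all $n\geq n_0$, and then monotonicity of $f$ gives $f(x)\leq f(x_n)\leq f(x_{\sigma(k)})$ for all $n\geq n_0$. Since $f(x_{\sigma(k)})\to f(x)$ as $k\to\infty$, every term $f(x_n)$ is eventually trapped between $f(x)$ and a quantity as close to $f(x)$ as we please, so $f(x_n)\to f(x)$, which is what right $\tau'$-continuity at $x$ requires. For the left statement one extracts instead a strictly increasing subsequence lying below $x$, replaces $(<x_{\sigma(k)})$ by $(>x_{\sigma(k)})$, and reverses the monotonicity inequalities.

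I expect the one genuinely delicate point to be the subsequence extraction: one must guarantee that the extracted subsequence is \emph{monotonically} right convergent in the precise sense of the definition (strictly decreasing, all terms $>x$) so that the hypothesis applies without change, and one must check that this can always be arranged starting only from a sequence that $\tau'$-converges to $x$ with $x_n\geq x$. The rest is routine manipulation of the order topology together with the monotonicity of $f$.
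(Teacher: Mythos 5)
Your proposal is correct and follows essentially the same route as the paper's proof: reduce to the eventually-constant case or extract a strictly decreasing subsequence above $x$ (using $\tau\subseteq\tau'$ so that $(<x_{\sigma(k)})$ is a $\tau$-neighbourhood of $x$), apply the hypothesis to that monotone subsequence, and squeeze the full sequence via monotonicity of $f$. In fact you spell out the subsequence extraction in more detail than the paper, which simply asserts it can be done recursively.
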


\begin{cor}\label{lemacont}
	Let $\tau'$ be a first countable topology on $X$ and $f:X \rightarrow [0,1]$ a function. If $f$ is right and left $\tau'$-continuous, then $f$ is $\tau'$-continuous. 
	\begin{proof}
		Let $x \in \X$ and $x_n \stackrel{\tau'}{\rightarrow} x$. Let $\sigma_1,\sigma_2: \N \to \N$ be two increasing functions such that $x_{\sigma_1(n)} \geq x$ and $x_{\sigma_2(n)} \leq x$ with $\sigma_1(\N) \cup \sigma_2(\N)=\N$. If either $\sigma_1(\N)$ or $\sigma_2(\N)$ is finite then the proof is easy. Otherwise, $(x_{\sigma_1(n)})$ is a right subsequence of $(x_n)$ and $(x_{\sigma_2(n)})$ is a left subsequence of $(x_n)$. By hypothesis, it holds that $f(x_{\sigma_1(n)}) \rightarrow f(x)$ and $f(x_{\sigma_2(n)}) \rightarrow f(x)$. It easily follows that $f(x_n) \rightarrow f(x)$, what means that $f$ is continuous with respect to the topology $\tau'$.
	\end{proof}
\end{cor}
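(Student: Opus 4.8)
The plan is to use that $\tau'$ is first countable, so that continuity of $f$ is equivalent to sequential continuity: it suffices to fix $x \in \X$ and a sequence $x_n \stackrel{\tau'}{\rightarrow} x$ and prove $f(x_n) \rightarrow f(x)$. The idea is to split $(x_n)$ into the part lying (weakly) to the right of $x$ and the part lying strictly to the left, apply the two one-sided hypotheses to these pieces, and then glue the conclusions back together.

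Concretely, I would partition $\N$ as $I = \{n \in \N : x_n \geq x\}$ and $J = \{n \in \N : x_n < x\}$, and take $\sigma_1, \sigma_2 \colon \N \to \N$ to be the increasing enumerations of $I$ and $J$ (so $\sigma_1(\N) \cup \sigma_2(\N) = \N$). At least one of $I$, $J$ is infinite. If $J$ is finite, then $x_n \geq x$ for all large $n$, hence a tail of $(x_n)$ is a right $\tau'$-convergent sequence to $x$ and right $\tau'$-continuity of $f$ gives $f(x_n) \rightarrow f(x)$ at once; symmetrically if $I$ is finite, using left $\tau'$-continuity. So the only substantive case is that in which both $I$ and $J$ are infinite.

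In that case $(x_{\sigma_1(n)})_n$ and $(x_{\sigma_2(n)})_n$ are subsequences of $(x_n)$, hence both $\tau'$-converge to $x$; moreover every term of the first is $\geq x$ and every term of the second is $\leq x$, so the first is right $\tau'$-convergent to $x$ and the second is left $\tau'$-convergent to $x$. By hypothesis $f(x_{\sigma_1(n)}) \rightarrow f(x)$ and $f(x_{\sigma_2(n)}) \rightarrow f(x)$. Since $\sigma_1(\N) \cup \sigma_2(\N) = \N$, I would conclude with the elementary fact that a real sequence whose indices split into finitely many blocks, along each of which it converges to the same limit $\ell$, converges to $\ell$: given $\varepsilon > 0$, choose thresholds past which $f(x_{\sigma_1(\cdot)})$ and $f(x_{\sigma_2(\cdot)})$ stay within $\varepsilon$ of $f(x)$, and pull them back to a single index threshold for $(f(x_n))$.

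I do not expect a genuine obstacle: the whole argument is the sequential reformulation of continuity (legitimate because $\tau'$ is first countable) together with routine bookkeeping. The only points deserving a word are the degenerate case in which one of the two index sets is finite — dealt with above without the gluing step — and the remark that a subsequence of a $\tau'$-convergent sequence again $\tau'$-converges to the same point. Nothing from the LOTS structure of $\X$ or the order topology $\tau$ is needed here beyond what is already encapsulated in the one-sided continuity hypotheses.
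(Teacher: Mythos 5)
Your proposal is correct and follows essentially the same route as the paper: partition the index set according to whether $x_n$ lies on the right or left of $x$, dispose of the case where one part is finite, apply the two one-sided continuity hypotheses to the resulting subsequences, and glue the two convergent pieces back into convergence of $f(x_n)$. The only difference is that you spell out the gluing step and the degenerate case in a bit more detail than the paper does.
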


\begin{ob}\label{ob:right}
	Note that Lemma \ref{lemaright} and Corollary \ref{lemacont} can be both applied to topology $\tau$.
\end{ob}

\begin{cor}\label{lemaconttauo}
	Let $\tau'$ be a first countable topology on $X$ with $\tau \subseteq \tau'$ and let $f:X \rightarrow [0,1]$ be a monotonically non-decreasing function. Suppose that $f(x_n) \rightarrow f(x)$ for each monotonically right $\tau'$-convergent sequence to $x$ and each monotonically left $\tau'$-convergent sequence, $x_n$, to $x$, then $f$ is continuous (with respect to the topology $\tau'$).
	\begin{proof}
		It follows from Lemma \ref{lemaright} and Corollary \ref{lemacont}.
	\end{proof}
\end{cor}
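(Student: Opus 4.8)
The plan is to combine the two immediately preceding results, since all the real work has already been done there. First I would fix an arbitrary point $x \in X$ and check that the standing hypotheses of Lemma \ref{lemaright} are in force: $\tau'$ is first countable, $\tau \subseteq \tau'$, and $f$ is monotonically non-decreasing. By assumption $f(x_n) \to f(x)$ holds for every monotonically right $\tau'$-convergent sequence to $x$, so Lemma \ref{lemaright} gives that $f$ is right $\tau'$-continuous. Applying the same lemma to the monotonically left $\tau'$-convergent sequences yields that $f$ is left $\tau'$-continuous.

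Second, having established that $f$ is both right and left $\tau'$-continuous, I would invoke Corollary \ref{lemacont}, whose only hypothesis is that $\tau'$ be first countable, to conclude that $f$ is $\tau'$-continuous. This is exactly the assertion of the corollary, so the proof is complete.

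There is essentially no obstacle beyond bookkeeping: the recursive extraction of a monotone decreasing (resp. increasing) subsequence is carried out inside Lemma \ref{lemaright}, and the splicing of a right subsequence with a left subsequence is carried out inside Corollary \ref{lemacont}. The only point that deserves a moment's care is to make sure the hypothesis ``$f(x_n)\to f(x)$ for each monotonically right $\tau'$-convergent (resp. left $\tau'$-convergent) sequence to $x$'' is being used at every $x \in X$, so that right- and left-$\tau'$-continuity are global properties before Corollary \ref{lemacont} is applied; once that is noted, the argument is a one-line deduction, which is why I would state the proof simply as ``It follows from Lemma \ref{lemaright} and Corollary \ref{lemacont}.''
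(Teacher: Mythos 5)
Your proposal is correct and is exactly the argument the paper intends: apply Lemma \ref{lemaright} twice (to the right- and left-convergent cases) to get right and left $\tau'$-continuity, then invoke Corollary \ref{lemacont}. The paper's proof is the same one-line deduction, so there is nothing further to add.
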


\begin{pro}\label{propcdf}
	Let $F$ be a cdf. Then:
	\begin{enumerate}
		\item $F$ is monotonically non-decreasing.
		\item $F$ is right $\tau$-continuous.
		\item If there does not exist $\min X$, then $\inf F(X)=0$.
		\item $\sup F(X)=1$. 
	\end{enumerate}
	
	\begin{proof}
		\begin{enumerate}
			\item This is obvious if we take into account the monotonicity of $\mu$ that follows from the fact that $\mu$ is a measure.
			
			\item For the purpose of proving that $F$ is right $\tau$-continuous, let $x_n$ be a monotonically right $\tau$-convergent sequence to $x$. Let us see that $F(x_n) \rightarrow F(x)$. 
			
			First of all, note that the fact that $x_n$ is a monotonically right $\tau$-convergent sequence to $x$ implies, by Lemma \ref{lema:infsup}.2, that $\bigcap_n(\leq x_n)=(\leq x)$. Moreover, $(\leq x_n)$ is a monotonically non-increasing sequence so $(\leq x_n) \rightarrow \bigcap_n (\leq x_n)=(\leq x)$. Thus, from the continuity of the measure $\mu$, it follows that $\mu(\leq x_n) \rightarrow \mu(\leq x)$, that is, $F(x_n) \rightarrow F(x)$. Therefore, by Lemma \ref{lemaright} and Remark \ref{ob:right}, we have that $F$ is right $\tau$-continuous. 
			
			\item Suppose that there does not exist $\min X$. By Proposition \ref{pro:leftboundseq}, there exists a sequence $x_n$ in $X$ such that $x_{n+1}<x_n$, for each $n \in \N$ and $\{x_n: n \in \N\}^l=X^l=\emptyset$. Then we have $\bigcap (\leq x_n)=\emptyset$. Now, note that $(\leq x_n)$ is a monotonically non-increasing sequence, what implies that $(\leq x_n) \rightarrow \bigcap (\leq x_n)=\emptyset$. By the continuity of the measure $\mu$ it holds that $\mu(\leq x_n)=F(x_n) \rightarrow \mu(\emptyset)=0$. Hence $\inf \{F(x_n): n \in \N\}=0$. Finally, if we join the previous equality with the fact that $0 \leq \inf F(X) \leq \inf \{F(x_n): n \in \N\}$, we conclude that $\inf F(X)=0$. 
			
			\item We distinguish two cases depending on whether there exists the maximum of $X$ or not:
			
			\begin{enumerate}
				\item Suppose that there exists $\max \X$. In this case $\sup F(X)=F(\max X)=\mu(\X)=1$.
				
				\item Suppose that there does not exist $\max \X$. By Proposition \ref{pro:leftboundseq}, there exists a sequence $x_n$ in $X$ such that $x_{n+1}>x_n$, for each $n \in \N$ and $\{x_n: n \in \N\}^u=X^u=\emptyset$. Then we have $\bigcup (\leq x_n)=X$. Now, note that $(\leq x_n)$ is a monotonically non-decreasing sequence, what implies that $(\leq x_n) \rightarrow \bigcup (\leq x_n)=\X$. By the continuity of the measure $\mu$ it holds that $\mu(\leq x_n)=F(x_n) \rightarrow \mu(\X)=1$. Hence $\sup \{F(x_n): n \in \N\}=1$. Finally, if we join the previous equality with the fact that $\sup \{F(x_n): n \in \N\} \leq \sup F(X) \leq 1$, we conclude that $\sup F(X)=1$.
			\end{enumerate}
		\end{enumerate}
	\end{proof}
\end{pro}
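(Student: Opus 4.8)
The plan is to treat the four assertions in order, each building on the order-theoretic and measure-theoretic tools already established. For (1), I would note that $x<y$ forces $(\leq x)\subseteq(\leq y)$, so monotonicity of $\mu$ gives $F(x)=\mu(\leq x)\leq\mu(\leq y)=F(y)$; in particular $F$ is a monotonically non-decreasing function, which makes it eligible for Lemma \ref{lemaright}. For (2), the key move is to reduce right $\tau$-continuity, via Lemma \ref{lemaright} and Remark \ref{ob:right}, to checking that $F(x_n)\to F(x)$ along every monotonically right $\tau$-convergent sequence $x_n$ to $x$. For such a sequence, Lemma \ref{lema:infsup}.2 yields $\bigcap_n(\leq x_n)=(\leq x)$, and since $(\leq x_n)$ is a monotonically non-increasing sequence of Borel sets, continuity of $\mu$ from above gives $\mu(\leq x_n)\to\mu(\leq x)$, i.e.\ $F(x_n)\to F(x)$.

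For (3), assume $\min X$ does not exist and apply Proposition \ref{pro:leftboundseq} to $A=X$ to obtain a strictly decreasing sequence $x_n$ in $X$ with $\{x_n:n\in\N\}^l=X^l=\emptyset$. Then any point of $\bigcap_n(\leq x_n)$ would be a lower bound of $\{x_n:n\in\N\}$, so $\bigcap_n(\leq x_n)=\emptyset$; as $(\leq x_n)$ is non-increasing, continuity of $\mu$ gives $F(x_n)=\mu(\leq x_n)\to\mu(\emptyset)=0$, and from $0\leq\inf F(X)\leq\inf_n F(x_n)=0$ we conclude $\inf F(X)=0$. For (4), I would split into cases. If $\max X$ exists, then $F(\max X)=\mu(X)=1$ while $F\leq 1$ everywhere, so $\sup F(X)=1$. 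If not, the ``respectively'' half of Proposition \ref{pro:leftboundseq}, applied to $A=X$ (which then has no maximum), produces a strictly increasing sequence $x_n$ with $\{x_n:n\in\N\}^u=X^u=\emptyset$, hence $\bigcup_n(\leq x_n)=X$; since $(\leq x_n)$ is non-decreasing, continuity of $\mu$ from below gives $F(x_n)\to\mu(X)=1$, and $\sup_n F(x_n)\leq\sup F(X)\leq 1$ closes the argument.

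The genuinely delicate step is (2): rather than attacking $\tau$-continuity directly, one must route through the monotone-function framework of Lemma \ref{lemaright} (which is precisely why (1) is proved first), and one must be careful to identify $\bigcap_n(\leq x_n)$ with $(\leq x)$ rather than with $(<x)$ — this is where Lemma \ref{lema:infsup}.2, together with the monotonicity of the approximating sequence, is indispensable. Parts (3) and (4) are then essentially bookkeeping built on Proposition \ref{pro:leftboundseq} and the sequential continuity of $\mu$, the only subtlety being the case split in (4) according to whether $X$ has a largest element.
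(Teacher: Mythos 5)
Your proposal is correct and follows essentially the same route as the paper: monotonicity of $\mu$ for (1), reduction to monotonically right $\tau$-convergent sequences via Lemma \ref{lemaright} together with Lemma \ref{lema:infsup}.2 and continuity of $\mu$ for (2), and Proposition \ref{pro:leftboundseq} applied to $A=X$ with the same case split on the existence of $\max X$ for (3) and (4). The only (welcome) addition is your explicit remark that a point of $\bigcap_n(\leq x_n)$ would be a lower bound of $\{x_n:n\in\N\}$, which the paper leaves implicit.
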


The previous proposition makes us wonder the next

\begin{que}
	Let $F: X \rightarrow [0,1]$ be a function satisfying the properties collected in Proposition \ref{propcdf}, does there exist a probability measure $\mu$ on $X$ such that its cdf, $F_\mu$, is $F$?
\end{que}

According to the previous results we can conclude that

\begin{cor}\label{cor:continuity}
	Let $F$ be a cdf and $x \in X$. Then $F$ is $\tau$-continuous at $x$ if, and only if $F$ is left $\tau$-continuous at $x$.
\end{cor}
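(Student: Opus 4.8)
The plan is to read off this corollary directly from the machinery already assembled, specifically Proposition \ref{propcdf}, Lemma \ref{lemaright}, and Corollary \ref{lemacont}, all of which by Remark \ref{ob:right} apply to the order topology $\tau$. The statement is a local-at-$x$ version, so I would first think about whether the tools are genuinely local; Lemma \ref{lemaright} is stated pointwise (it fixes $x$ and concludes right $\tau'$-continuity, which really is a statement about behavior at $x$ once one inspects the proof), and Corollary \ref{lemacont}'s proof also works at a single point, so there is no obstruction there.

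First I would observe the trivial direction: if $F$ is $\tau$-continuous at $x$, then in particular $F(x_n) \to F(x)$ for every sequence $x_n \stackrel{\tau}{\to} x$, hence for every left $\tau$-convergent sequence to $x$, so $F$ is left $\tau$-continuous at $x$. That requires nothing.

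For the nontrivial direction, suppose $F$ is left $\tau$-continuous at $x$. By Proposition \ref{propcdf}.1, $F$ is monotonically non-decreasing, and by Proposition \ref{propcdf}.2, $F$ is right $\tau$-continuous (everywhere, in particular at $x$). Now I would invoke Corollary \ref{lemacont} with $\tau' = \tau$ (legitimate by Remark \ref{ob:right}, since $\tau$ is first countable by Proposition \ref{pro:ANI}): $F$ is right $\tau$-continuous at $x$ and left $\tau$-continuous at $x$, therefore $F$ is $\tau$-continuous at $x$. One has to check that the proof of Corollary \ref{lemacont} is indeed local at $x$ — it splits an arbitrary sequence converging to $x$ into its right part and its left part and uses continuity from each side at $x$ — which it is.

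The only mild subtlety, and the step I expect to need the most care in writing up, is making sure the "at $x$" localization is honestly supported: Proposition \ref{propcdf}.2 gives right $\tau$-continuity globally so there is no issue on that side, and left $\tau$-continuity at $x$ is the hypothesis, so feeding both into (the pointwise reading of) Corollary \ref{lemacont} closes the argument. No separate handling of isolated versus non-isolated $x$ is needed, since at an isolated point every convergent sequence is eventually constant and continuity is automatic.
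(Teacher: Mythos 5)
Your proposal is correct and follows exactly the route the paper intends (the paper gives no explicit proof, merely asserting the corollary follows "according to the previous results"): the trivial direction plus Proposition \ref{propcdf}.2 for right $\tau$-continuity and the pointwise reading of Corollary \ref{lemacont} to combine the two one-sided continuities at $x$. Your check that the proof of Corollary \ref{lemacont} is genuinely local at $x$ is the right thing to verify and it does hold.
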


\begin{pro}\label{pro:isolated-cont}
	Let $x \in \X$ and $f$ be a monotonically non-decreasing function. If $x$ is left-isolated (respectively right-isolated), then $f$ is left $\tau'$-continuous (respectively right $\tau'$-continuous) where $\tau'$ is a first countable topology such that $\tau \subseteq \tau'$.
	\begin{proof}
		Let $x \in \X$ and suppose that it is left-isolated. The case in which $x=\min X$ is obvious. Suppose that $x \neq \min X$, then there exists $z \in X$ such that $]z,x[=\emptyset$. Hence $(>z)$ is open in $\tau$ and, consequently, a neighbourhood of $x$. Let $x_n$ be a left $\tau'$-convergent sequence to $x$, then it is also left $\tau$-convergent to $x$. Hence, there exists $n_0 \in \N$ such that $x_n \in (>z)$, for each $n \geq n_0$. Since $x_n \leq x$, we have that $x_n=x$, for each $n \geq n_0$. Consequently, $f(x_n) \rightarrow f(x)$ and $f$ is left $\tau'$-continuous.

		The case in which $x$ is right-isolated can proved analogously.
	\end{proof}
\end{pro}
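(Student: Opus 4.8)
The plan is to check left $\tau'$-continuity of $f$ at $x$ directly from the sequential formulation used throughout the paper: I must show that $f(x_n)\to f(x)$ for every left $\tau'$-convergent sequence $x_n\to x$ (that is, $x_n\le x$ for all $n$ and $x_n\stackrel{\tau'}{\rightarrow}x$). The whole idea is that left-isolation of $x$ forces any such sequence to be eventually constant equal to $x$, after which $f(x_n)\to f(x)$ is immediate; in particular, monotonicity of $f$ plays no real role, and the first-countability of $\tau'$ is only there so that this sequential notion agrees with topological left-continuity — the argument itself will use only $\tau\subseteq\tau'$.

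First I would dispose of the degenerate case. If $(<x)=\emptyset$, equivalently $x=\min X$, then any sequence with $x_n\le x$ must satisfy $x_n=x$ for every $n$, so $f(x_n)=f(x)$ and there is nothing to prove.

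In the main case $x$ is left-isolated with $x\neq\min X$, so by the definition of left-isolation there is a point $z<x$ with $]z,x[=\emptyset$ (an immediate predecessor of $x$). The key observation is that $(>z)$ is a subbasic $\tau$-open set containing $x$, hence — since $\tau\subseteq\tau'$ — a $\tau'$-neighbourhood of $x$. Given a left $\tau'$-convergent sequence $x_n\to x$, convergence yields $n_0\in\N$ with $x_n\in(>z)$ for all $n\ge n_0$; combined with $x_n\le x$ this places $x_n$ in $]z,x]=\,]z,x[\,\cup\{x\}=\{x\}$, so $x_n=x$ for all $n\ge n_0$. Hence $f(x_n)\to f(x)$, and since the sequence was arbitrary, $f$ is left $\tau'$-continuous at $x$. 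The right-isolated case runs symmetrically, using $(<z)$ in place of $(>z)$ and a successor $z>x$ with $]x,z[=\emptyset$.

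I do not anticipate a genuine obstacle here: there is no deep step, only bookkeeping. The one point deserving a moment's care is extracting from the definition of ``left-isolated'' an actual predecessor $z<x$, rather than a vacuously empty interval $]z,x[$ with $z\ge x$; this is unproblematic once we are in the case $x\neq\min X$, and it mirrors exactly what is done in the proof of Proposition \ref{pro:isolated}.
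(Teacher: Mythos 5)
Your proposal is correct and follows essentially the same route as the paper: reduce to the case $x\neq\min X$, extract a predecessor $z$ with $]z,x[=\emptyset$, use that $(>z)$ is a $\tau$-open (hence $\tau'$-) neighbourhood of $x$ to force any left $\tau'$-convergent sequence to be eventually equal to $x$, and conclude. Your side remarks — that monotonicity of $f$ is not actually used and that first countability only serves the sequential formulation — are accurate observations about the paper's own argument as well.
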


\begin{cor}\label{isolated-cont}
	Let $\mu$ be a probability measure on $\X$ and $F$ its cdf. Let $x \in \X$. If $x$ is left-isolated, then $F$ is $\tau$-continuous at $x$.
	\begin{proof}
		It immediately follows from Proposition \ref{pro:isolated-cont}, Corollary \ref{cor:continuity} and Proposition \ref{propcdf}.
	\end{proof}
\end{cor}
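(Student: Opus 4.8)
The plan is to assemble the corollary from three facts established earlier, with no genuinely new argument needed. First I would record the standing properties of $F$: since $F$ is the cumulative distribution function of a probability measure, Proposition \ref{propcdf}.1 gives that $F$ is monotonically non-decreasing, and Proposition \ref{pro:ANI} gives that the order topology $\tau$ is first countable. In particular $\tau$ is a first countable topology with $\tau \subseteq \tau$, so $\tau$ itself is a legitimate choice for the auxiliary topology $\tau'$ appearing in the hypotheses of the monotonicity results.

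The core step is then to apply Proposition \ref{pro:isolated-cont} with $\tau' = \tau$: because $x$ is left-isolated and $F$ is monotonically non-decreasing, that proposition yields that $F$ is left $\tau$-continuous at $x$. Conceptually this is where the work happens — a monotone function cannot have a left-hand jump at a point having an immediate predecessor (or at $\min X$), since every left $\tau$-convergent sequence to such a point is eventually constant equal to $x$ — but for the purposes of this corollary it is simply a citation.

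Finally I would invoke Corollary \ref{cor:continuity}, which asserts that for a cdf, $\tau$-continuity at a point is equivalent to left $\tau$-continuity at that point (right $\tau$-continuity being automatic by Proposition \ref{propcdf}.2). Combining the previous step with this equivalence gives that $F$ is $\tau$-continuous at $x$, as desired. I do not expect a real obstacle here; the only point requiring a moment's care is verifying the side conditions of Proposition \ref{pro:isolated-cont} — that $\tau$ is first countable and contains itself — which is immediate from Proposition \ref{pro:ANI}.
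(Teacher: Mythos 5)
Your proposal is correct and follows exactly the paper's own route: the paper likewise derives the corollary by combining Proposition \ref{pro:isolated-cont} (applied with $\tau'=\tau$, legitimate since $\tau$ is first countable by Proposition \ref{pro:ANI}), the monotonicity and right $\tau$-continuity of $F$ from Proposition \ref{propcdf}, and the equivalence of continuity with left continuity from Corollary \ref{cor:continuity}. Nothing is missing.
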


\begin{defi}
	Let $\mu$ be a probability measure on $\X$ and $F$ its cdf. We define $F_-: \X \rightarrow [0,1]$, by $F_-(x)=\mu(<x)$, for each $x \in \X$.
\end{defi}

Note that $F_-$ is monotonically non-decreasing by the monotonicity of the measure.

Next we introduce two results which relate $F_-$ with $F$.
\begin{pro}\label{pro:sup}
	Let $\mu$ be a probability measure on $\X$ and $F$ its cdf. Then $\sup F(<x)=F_-(x)$, for each $x \in \X$ with $x \neq \min X$.
	\begin{proof}
		$\geq)$ Let $x \in \X$ with $x \neq \min X$. We distinguish two cases depending on whether $x$ is left-isolated or not:
		\begin{enumerate}
			\item Suppose that $x$ is not left-isolated, then by Proposition \ref{pro:isolated}, there exists a monotonically left $\tau$-convergent sequence, $a_n$, to $x$. This implies that $(\leq a_n) \rightarrow \cup (\leq a_n)$. Moreover, Lemma \ref{lema:infsup}.3 gives us that $\cup (\leq a_n)=(<x)=\cup (< a_n)$. Hence, $(\leq a_n) \rightarrow (<x)$ and, consequently, $F(a_n) \rightarrow \mu(<x)$. Now, since $a_n<x$, $F(a_n) \leq \sup F(<x)$. If we take limits, we have that $\mu(<x)=F_-(x) \leq \sup F(<x)$.
			
			\item Suppose that $x$ is left-isolated, then there exists $z \in \X$ such that $z<x$ and $]z,x[=\emptyset$, what implies that $F(z)\leq \sup F(<x)$. Moreover, note that $(<x)=(\leq z)$ what means that $\mu(<x)=F(z)$. We conclude that $\mu(<x)=F_-(x) \leq \sup F(<x)$.
		\end{enumerate}

		$\leq)$ Let $y \in \X$ with $y<x$, then $F(y) \leq \mu(<x)$ and hence $\sup F(<x) \leq \mu(<x)=F_-(x)$.

	\end{proof}
\end{pro}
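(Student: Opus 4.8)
The plan is to establish the identity by proving the two inequalities separately, observing that one direction is essentially free and the other needs a limiting argument that splits on whether $x$ is left-isolated.

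First I would dispatch $\sup F(<x)\leq F_-(x)$. For every $y\in\X$ with $y<x$ we have $(\leq y)\subseteq(<x)$, so monotonicity of $\mu$ gives $F(y)=\mu(\leq y)\leq\mu(<x)=F_-(x)$; taking the supremum over all such $y$ yields the bound. This uses nothing beyond the definitions of $F$ and $F_-$ and the monotonicity of a measure.

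For the reverse inequality $F_-(x)\leq\sup F(<x)$ I would distinguish two cases. If $x$ is left-isolated (recall $x\neq\min X$ by hypothesis), there is $z\in\X$ with $z<x$ and $]z,x[=\emptyset$; then $(<x)=(\leq z)$, so $F_-(x)=\mu(<x)=\mu(\leq z)=F(z)$, and since $z<x$ we get $F(z)\in F(<x)$, hence $F_-(x)\leq\sup F(<x)$. If $x$ is not left-isolated, Proposition \ref{pro:isolated} provides a monotonically left $\tau$-convergent sequence $a_n\to x$ with $a_n<a_{n+1}<x$ for all $n$. Then $\{(\leq a_n)\}_n$ is a non-decreasing sequence of Borel sets, so $(\leq a_n)\to\bigcup_n(\leq a_n)$, and Lemma \ref{lema:infsup} identifies $\bigcup_n(\leq a_n)=(<x)$. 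Continuity of $\mu$ then gives $F(a_n)=\mu(\leq a_n)\to\mu(<x)=F_-(x)$; since every $F(a_n)$ lies in $F(<x)$, passing to the limit yields $F_-(x)\leq\sup F(<x)$. Combining the two inequalities finishes the proof.

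I do not expect a genuine obstacle here; the only point requiring care is the case split on left-isolation. In the non-isolated case the argument rests on producing the approximating sequence via Proposition \ref{pro:isolated} and then invoking the set identity in Lemma \ref{lema:infsup} together with continuity of the measure, whereas in the left-isolated case no such sequence exists and one must instead use the single point $z$ immediately below $x$. This is the same bookkeeping already seen in Proposition \ref{propcdf} and Corollary \ref{isolated-cont}.
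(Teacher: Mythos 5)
Your proof is correct and follows essentially the same route as the paper: the easy inequality $\sup F(<x)\leq F_-(x)$ via monotonicity of $\mu$, and the reverse inequality by splitting on whether $x$ is left-isolated, using the point immediately below $x$ in the isolated case and a monotonically left $\tau$-convergent sequence from Proposition \ref{pro:isolated} together with Lemma \ref{lema:infsup} and continuity of $\mu$ in the other. No gaps.
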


We can recover the cdf $F$ from $F_-$.

\begin{pro}
	Let $F$ be a cdf, then $F(x)=\inf F_-(>x)$, for each $x \in X$ with $x \neq \max X$.
	\begin{proof}
		$\leq)$ Let $x \in X$ with $x \neq \max X$ and $y \in X$ be such that $y>x$, then $\mu(<y) \geq \mu(\leq x)$, that is, $F(x) \leq F_-(y)$, what gives us that $F(x) \leq \inf F_-(>x)$.
		
		$\geq)$ Let $x \in X$ with $x \neq \max X$. We distinguish two cases depending on whether $x$ is right-isolated or not.
		
		\begin{enumerate}
			\item Suppose that $x$ is right-isolated, then there exists $z \in X$ such that $z>x$ and $]x,z[=\emptyset$, what implies that $\inf F_-(>x) \leq F_-(z)$. Moreover, note that $(>x)=(\geq z)$ what means that $\mu(>x)=\mu(\geq z)$ or, equivalently, $\mu(\leq x)=\mu(<z)$. Hence, $F(x)=F_-(z)$. We conclude that $\inf F_-(>x)\leq F(x)$.
			
			\item Suppose that $x$ is not right-isolated, then by Proposition \ref{pro:isolated}, there exists a monotonically right $\tau$-convergent sequence, $a_n$, to $x$. Since $F$ is right $\tau$-continuous, we have that $F(a_n) \rightarrow F(x)$. Now, the fact that $a_n>x$ gives us that $\inf F_-(>x) \leq F_-(a_n) \leq F(a_n)$. Finally, if we take limits, we have that $\inf F_-(>x) \leq F(x)$.
			
		\end{enumerate}
	\end{proof}
\end{pro}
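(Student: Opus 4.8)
The plan is to prove the two inequalities $F(x)\le \inf F_-(>x)$ and $F(x)\ge \inf F_-(>x)$ separately, mirroring the structure of the proof of Proposition \ref{pro:sup}. Note first that the hypothesis $x\neq \max X$ guarantees $(>x)\neq\emptyset$, so $\inf F_-(>x)$ is taken over a nonempty set.

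For the inequality $F(x)\le \inf F_-(>x)$, I would argue directly from the monotonicity of $\mu$: for any $y\in X$ with $y>x$ we have $(\leq x)\subseteq (<y)$ (since $z\leq x<y$ forces $z<y$), hence $F(x)=\mu(\leq x)\le \mu(<y)=F_-(y)$; taking the infimum over all such $y$ gives $F(x)\le\inf F_-(>x)$. This is the routine half.

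For the reverse inequality $F(x)\ge \inf F_-(>x)$ I would split into two cases according to whether $x$ is right-isolated. If $x$ is right-isolated, pick $z\in X$ with $z>x$ and $]x,z[=\emptyset$; then $(>x)=(\geq z)$, and passing to complements $(\leq x)=(<z)$, so $F(x)=\mu(\leq x)=\mu(<z)=F_-(z)\ge \inf F_-(>x)$. If $x$ is not right-isolated, then by Proposition \ref{pro:isolated} there is a monotonically right $\tau$-convergent sequence $a_n$ to $x$; since $F$ is right $\tau$-continuous by Proposition \ref{propcdf}.2, $F(a_n)\to F(x)$. Now for each $n$ we have $a_n>x$, so $\inf F_-(>x)\le F_-(a_n)=\mu(<a_n)\le \mu(\leq a_n)=F(a_n)$, and letting $n\to\infty$ yields $\inf F_-(>x)\le F(x)$.

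I expect the only real point requiring care to be the non-right-isolated case: it is where the two genuine inputs of the theory enter, namely the existence of a monotonically right $\tau$-convergent sequence at a non-right-isolated point (Proposition \ref{pro:isolated}) and the right $\tau$-continuity of the cdf (Proposition \ref{propcdf}.2). The right-isolated case is purely a bookkeeping check that $(>x)=(\geq z)$ and hence $(\leq x)=(<z)$, and the first inequality is immediate from monotonicity, so no further obstacle is anticipated.
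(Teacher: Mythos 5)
Your proposal is correct and follows essentially the same route as the paper's proof: the first inequality from monotonicity of $\mu$ via $(\leq x)\subseteq(<y)$, and the reverse inequality by splitting on whether $x$ is right-isolated, using $(\leq x)=(<z)$ in the isolated case and Proposition \ref{pro:isolated} together with the right $\tau$-continuity of $F$ in the other. No gaps.
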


\begin{lema}\label{lema:medidapunto}
	Let $\mu$ be a probability measure on $\X$ and $F$ its cdf. Given $x \in \X$, it holds that $F(x)=F_-(x)+\mu(\{x\})$.
	\begin{proof}
		Indeed, given $x \in \X$, by the definition of cdf, we have that $F(x)=\mu(\leq x)$. Now, since $\mu$ is $\sigma$-additive, $F(x)=\mu(<x)+\mu(\{x\})$. We conclude that $F(x)=F_-(x)+\mu(\{x\})$.
	\end{proof}
\end{lema}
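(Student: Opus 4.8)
The plan is to decompose the set $(\leq x)$ into two disjoint Borel pieces and invoke additivity of $\mu$. For any $x \in \X$ one has the set-theoretic identity
\[
(\leq x) = (<x) \cup \{x\},
\]
and this union is disjoint, since no point $y$ can satisfy simultaneously $y<x$ and $y=x$.

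First I would note that all three sets appearing here lie in the Borel $\sigma$-algebra, so that applying $\mu$ to them makes sense: $(\leq x)$ is the complement of the subbasic open set $(>x)$ and hence closed, $(<x)$ is a subbasic open set, and $\{x\}$ is closed because a LOTS is Hausdorff (indeed $T_1$ suffices). Thus $\{x\} = (\leq x) \cap (\geq x)$ is an intersection of two closed sets, hence Borel.

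Then, using that $\mu$ is $\sigma$-additive — applied to the countable disjoint family whose first two members are $(<x)$ and $\{x\}$ and all of whose remaining members are empty — we obtain $\mu(\leq x) = \mu(<x) + \mu(\{x\})$. Unwinding the definitions $F(x) = \mu(\leq x)$ and $F_-(x) = \mu(<x)$ gives exactly $F(x) = F_-(x) + \mu(\{x\})$.

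There is no genuine obstacle in this argument; it is a direct unwinding of the definitions together with finite additivity of a measure. The only point that deserves an explicit word is the measurability of the singleton $\{x\}$, which is immediate from the Hausdorff property of $\X$ as a LOTS.
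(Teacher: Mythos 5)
Your proof is correct and follows the same route as the paper: decompose $(\leq x)$ as the disjoint union $(<x)\cup\{x\}$ and apply additivity of $\mu$. The extra remarks on Borel measurability of the three sets are fine (and harmless), but the argument is essentially identical to the one in the text.
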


A cdf let us calculate the measure of $]a,b]$ for each $a \leq b$ according to the next proposition and Lemma \ref{lema:medidapunto}.

\begin{pro}\label{pro:measureab}
	Let $\mu$ be a probability measure on $\X$ and $F$ its cdf, then $\mu(]a,b])=F(b)-F(a)$ for each $a, b \in \X$ with $a < b$.
	\begin{proof}
		Note that we can write $(\leq b)= (\leq a) \cup ]a,b]$. Now, since $\mu$ is a measure (and hence $\sigma$-additive) it holds that  $\mu(\leq b)= \mu(\leq a) + \mu(]a,b])$, that is, $\mu(]a,b])=F(b)-F(a)$.
	\end{proof}
\end{pro}

\begin{cor}\label{cor:measure} Let $\mu$ be a probability measure on $\X$ and $F$ its cdf, then:
	\begin{enumerate} 
		\item $\mu([a, b])=F(b)-F_-(a)$.
		\item $\mu(]a,b[)=F_-(b)-F(a)$.
		\item $\mu([a,b[)=F_-(b)-F_-(a)$.
	\end{enumerate}
	
	\begin{proof}
		The proof is immediate if we take into account the previous proposition and Lemma \ref{lema:medidapunto}.
		
	\end{proof}
\end{cor}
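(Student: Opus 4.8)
The plan is to reduce all three identities to Proposition~\ref{pro:measureab} (which computes $\mu(]a,b])=F(b)-F(a)$) and Lemma~\ref{lema:medidapunto} (which says $\mu(\{x\})=F(x)-F_-(x)$), by writing each interval as a disjoint union of a half-open interval of the form $]a,b]$ with one or two singletons and then invoking finite additivity of $\mu$.

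For item (1), I would observe that, since $a<b$, we have the disjoint decomposition $[a,b]=\{a\}\cup\, ]a,b]$. Then $\sigma$-additivity of $\mu$ gives $\mu([a,b])=\mu(\{a\})+\mu(]a,b])$; substituting $\mu(]a,b])=F(b)-F(a)$ from Proposition~\ref{pro:measureab} and $\mu(\{a\})=F(a)-F_-(a)$ from Lemma~\ref{lema:medidapunto} yields $\mu([a,b])=F(a)-F_-(a)+F(b)-F(a)=F(b)-F_-(a)$. For item (2), since $a<b$ we have $\{b\}\subseteq\,]a,b]$ and $]a,b[\,=\,]a,b]\setminus\{b\}$, so $\mu(]a,b[)=\mu(]a,b])-\mu(\{b\})=(F(b)-F(a))-(F(b)-F_-(b))=F_-(b)-F(a)$. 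For item (3), combining the two previous observations, $[a,b[\,=\,\{a\}\cup\,]a,b[$ is a disjoint union, hence $\mu([a,b[)=\mu(\{a\})+\mu(]a,b[)=(F(a)-F_-(a))+(F_-(b)-F(a))=F_-(b)-F_-(a)$.

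I do not expect any real obstacle here; the statement is essentially bookkeeping. The only point requiring a little care is to make sure each decomposition is genuinely into pairwise disjoint measurable sets (in particular that $\{b\}\subseteq\,]a,b]$ and $\{a\}\notin\,]a,b]$, both of which follow from $a<b$), so that finite additivity and the resulting subtraction identities are legitimate.
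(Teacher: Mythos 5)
Your proof is correct and follows exactly the route the paper intends: reduce each interval to $]a,b]$ plus/minus singletons and apply Proposition \ref{pro:measureab} together with Lemma \ref{lema:medidapunto}. The paper leaves this as "immediate," and your write-up simply makes that bookkeeping explicit.
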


\begin{pro}\label{pro:FconvergesF-}
	Let $\mu$ be a probability measure on $\X$ and $F$ its cdf. Let $x \in \X$ and $x_n$ be a monotonically left $\tau$-convergent sequence to $x$ then $F(x_n) \rightarrow F_-(x)$.
	
	\begin{proof}
		Let $x \in \X$ and $x_n$ be a monotonically left $\tau$-convergent sequence to $x$. Lemma \ref{lema:infsup}.5 gives us that $\bigcup_n(\leq x_n)=(<x)$. Note that $(\leq x_n)$ is a monotonically non-decreasing sequence, what means that $(\leq x_n) \rightarrow \bigcup_n(\leq x_n)=(<x)$. Finally, by the continuity of $\mu$ it follows that $\mu(\leq x_n) \rightarrow \mu(<x)=F_-(x)$, that is, $F(x_n) \rightarrow F_-(x)$.
	\end{proof}
\end{pro}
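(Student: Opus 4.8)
The plan is to reduce everything to the continuity of the measure $\mu$ along a monotone sequence of sets, exactly as in the proof of Proposition \ref{propcdf}(2), but now using the \emph{left} convergence so that the limit set is $(<x)$ rather than $(\leq x)$.

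First I would record the set-theoretic input. Since $x_n$ is monotonically left $\tau$-convergent to $x$, we have $x_n < x_{n+1} < x$ for each $n$, hence $(\leq x_n) \subseteq (\leq x_{n+1})$, so $\{(\leq x_n)\}_{n\in\N}$ is a monotonically non-decreasing sequence of Borel sets. Its union can be identified using Lemma \ref{lema:infsup}: the clause about a monotonically left $\tau$-convergent sequence gives $\bigcup_n (\leq x_n) = (<x)$. (Intuitively: every $y \leq x_n$ for some $n$ certainly satisfies $y < x$; conversely, if $y < x$ then, because $x_n \to x$ from the left, some $x_n$ lies strictly between $y$ and $x$, so $y \in (\leq x_n)$.)

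Then I would invoke the continuity of $\mu$ from below: for a monotonically non-decreasing sequence of sets one has $\mu((\leq x_n)) \to \mu\big(\bigcup_n (\leq x_n)\big)$. Combining this with the computation of the union, $\mu((\leq x_n)) \to \mu((<x))$. Finally, unwinding the definitions, $F(x_n) = \mu((\leq x_n))$ and $F_-(x) = \mu((<x))$, so $F(x_n) \to F_-(x)$, which is the claim.

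I do not expect any genuine obstacle here: the proof is a two-line application of monotone continuity of $\mu$ once the union of the sets $(\leq x_n)$ has been computed, and that computation is precisely what Lemma \ref{lema:infsup} supplies. The only point requiring a little care is making sure one uses the statement for \emph{left}-convergent sequences (so that the union is the open ray $(<x)$, producing $F_-$) rather than the right-convergent version used earlier for the right-continuity of $F$.
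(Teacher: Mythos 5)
Your proof is correct and follows essentially the same route as the paper's: identify $\bigcup_n(\leq x_n)=(<x)$ via Lemma \ref{lema:infsup} (the clause for monotonically left $\tau$-convergent sequences), then apply continuity of $\mu$ from below to the non-decreasing sequence of sets $(\leq x_n)$. No differences worth noting.
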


Next we collect the properties of $F_-$:

\begin{pro}\label{contizq-tauo}
	
	Let $\mu$ be a probability measure on $\X$ and $F$ its cdf, then:
	\begin{enumerate}
		\item $F_-$ is monotonically non-decreasing.
		\item $F_-$ is left $\tau$-continuous. 
		\item $\inf F_-(X)=0$.
		\item If there does not exist the maximum of $X$, then $\sup F_-(X)=1$. Otherwise, $F_-(\max X)=1-\mu(\{\max X\})$.
	\end{enumerate}
	
	\begin{proof}
		\begin{enumerate}
			\item This is obvious if we take into account the monotonicity of $\mu$ that follows from the fact that it is a measure.
			
			\item Let $x_n$ be a monotonically left $\tau$-convergent sequence to $x$, then by Proposition \ref{pro:FconvergesF-}, it holds that $F(x_n) \rightarrow F_-(x)$. Since $x_n$ is monotonically left $\tau$-convergent, it holds that $x_n<x_{n+1} <x$, so the fact that $F_-$ is monotonically non-decreasing implies that $F(x_n) \leq F_-(x_{n+1})\leq F_-(x)$. By taking limits, we conclude that $F_-(x_n) \rightarrow F_-(x)$, and by Lemma \ref{lemaright} and Remark \ref{ob:right}, $F_-$ is left $\tau$-continuous. 
			
			\item By Proposition \ref{pro:leftboundseq}, there exists a sequence $x_n$ in $X$ such that $x_{n+1}<x_n$, for each $n \in \N$ and $\{x_n: n \in \N\}^l=X^l=\emptyset$. Then we have $\bigcap (< x_n)=\emptyset$. Now, note that $(< x_n)$ is a monotonically non-increasing sequence, what implies that $(< x_n) \rightarrow \bigcap (< x_n)=\emptyset$. By the continuity of the measure $\mu$ it holds that $\mu(< x_n)=F_-(x_n) \rightarrow \mu(\emptyset)=0$. Hence $\inf \{F_-(x_n): n \in \N\}=0$. Finally, if we join the previous equality with the fact that $0 \leq \inf F_-(X) \leq \inf \{F_-(x_n): n \in \N\}$, we conclude that $\inf F_-(X)=0$.
			
			\item We distinguish two cases depending on whether there exists the maximum of $X$ or not:
			
			\begin{enumerate}
				\item Suppose that there does not exist $\max \X$. By Proposition \ref{pro:leftboundseq}, there exists a sequence $x_n$ in $X$ such that $x_{n+1}>x_n$, for each $n \in \N$ and $\{x_n: n \in \N\}^u=X^u=\emptyset$. Then we have $\bigcup (< x_n)=X$. Now, note that $(< x_n)$ is a monotonically non-decreasing sequence, what implies that $(< x_n) \rightarrow \bigcup (< x_n)=\X$. By the continuity of the measure $\mu$ it holds that $\mu(< x_n)=F_-(x_n) \rightarrow \mu(\X)=1$. Hence $\sup \{F_-(x_n): n \in \N\}=1$. Finally, if we join the previous equality with the fact that $\sup \{F_-(x_n): n \in \N\} \leq \sup F_-(X) \leq 1$, we conclude that $\sup F_-(X)=1$.
				\item Now suppose that there exists $\max X$, then Lemma \ref{lema:medidapunto} let us claim that $F_-(\max X)=F(\max X)-\mu(\{\max X\})=1-\mu(\{\max X\})$.
			\end{enumerate}

		\end{enumerate}
	\end{proof}
\end{pro}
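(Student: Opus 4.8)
The plan is to establish the four items in order, drawing on the tools already assembled: monotonicity and continuity (under monotone set limits) of $\mu$, the one-sided sequential continuity criterion of Lemma~\ref{lemaright} together with Remark~\ref{ob:right}, the cofinal/coinitial monotone sequences of Proposition~\ref{pro:leftboundseq}, the convergence $F(x_n)\to F_-(x)$ of Proposition~\ref{pro:FconvergesF-}, and the identity $F(x)=F_-(x)+\mu(\{x\})$ of Lemma~\ref{lema:medidapunto}. Item~(1) is immediate: for $x<y$ one has $(<x)\subseteq(<y)$, so monotonicity of $\mu$ gives $F_-(x)=\mu(<x)\le\mu(<y)=F_-(y)$.

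For item~(2), since $F_-$ is monotonically non-decreasing, Lemma~\ref{lemaright} (applicable to $\tau$ by Remark~\ref{ob:right}) reduces left $\tau$-continuity to showing $F_-(x_n)\to F_-(x)$ whenever $x_n$ is monotonically left $\tau$-convergent to $x$. Proposition~\ref{pro:FconvergesF-} already supplies $F(x_n)\to F_-(x)$, so it remains to trap $F_-$ between these quantities. From $x_n<x_{n+1}<x$ we get the inclusions $(<x_n)\subseteq(\le x_n)\subseteq(<x_{n+1})\subseteq(<x)$, hence $F_-(x_n)\le F(x_n)\le F_-(x_{n+1})\le F_-(x)$; since $F(x_n)\to F_-(x)$ and $F(x_n)\le F_-(x_{n+1})\le F_-(x)$, the squeeze forces $F_-(x_{n+1})\to F_-(x)$, i.e.\ $F_-(x_n)\to F_-(x)$, and left $\tau$-continuity follows.

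Items~(3) and~(4) both split according to the existence of $\min\X$ and $\max\X$. For (3): if $\min\X$ exists then $(<\min\X)=\emptyset$, so $F_-(\min\X)=0$ and, as $F_-\ge 0$, $\inf F_-(\X)=0$. If $\min\X$ does not exist, Proposition~\ref{pro:leftboundseq} produces a strictly decreasing sequence $x_n\in\X$ with $\{x_n:n\in\N\}^l=\X^l=\emptyset$; then $\bigcap_n(<x_n)=\emptyset$ (a point of the intersection would be a lower bound of $\{x_n\}$), and since $(<x_n)$ is monotonically non-increasing, continuity of $\mu$ yields $F_-(x_n)=\mu(<x_n)\to\mu(\emptyset)=0$, whence $0\le\inf F_-(\X)\le\inf_n F_-(x_n)=0$. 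For (4): if $\max\X$ exists, Lemma~\ref{lema:medidapunto} gives $F_-(\max\X)=F(\max\X)-\mu(\{\max\X\})=1-\mu(\{\max\X\})$, using $F(\max\X)=\mu(\X)=1$. If $\max\X$ does not exist, Proposition~\ref{pro:leftboundseq} produces a strictly increasing $x_n\in\X$ with $\{x_n:n\in\N\}^u=\X^u=\emptyset$; then $\bigcup_n(<x_n)=\X$ (a point lying in no $(<x_n)$ would be an upper bound of $\{x_n\}$), and since $(<x_n)$ is monotonically non-decreasing, continuity of $\mu$ yields $F_-(x_n)=\mu(<x_n)\to\mu(\X)=1$, so $1=\sup_n F_-(x_n)\le\sup F_-(\X)\le 1$.

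I do not anticipate a genuine obstacle here; the only points needing care are the sandwich in item~(2) — making sure $F(x_n)$, not $F_-(x_n)$, is the quantity already known to converge, and that $F_-$ is indeed trapped between consecutive terms — and invoking the correct monotone convergence statement for the set sequences $(<x_n)$ in items~(3) and~(4), together with not forgetting the $\min$/$\max$ case distinctions the statement itself signals.
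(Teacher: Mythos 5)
Your proof is correct and follows essentially the same route as the paper: item (2) via Proposition~\ref{pro:FconvergesF-} and the sandwich $F(x_n)\le F_-(x_{n+1})\le F_-(x)$ followed by Lemma~\ref{lemaright}, and items (3)--(4) via the monotone sequences of Proposition~\ref{pro:leftboundseq} and continuity of $\mu$. Your explicit treatment of the case where $\min X$ exists in item (3) (giving $F_-(\min X)=0$ directly) is a small point the paper's proof glosses over, since Proposition~\ref{pro:leftboundseq} only applies when no minimum exists.
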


\section{Discontinuities  of a cdf}

In this section we prove some results which are analogous to those proven in Chapter 1 of \cite{chung} and which are related to the discontinuities of a cdf.

First of all, we give a sufficient condition to ensure that a cdf is continuous at a point.

\begin{pro}\label{pro:cont-munula}
	Let $x \in \X$, $\mu$ be a probability measure on $\X$ and $F$ its cdf. If $\mu(\{x\})=0$ then $F$ is $\tau$-continuous at $x$.
	
	\begin{proof}
		Let $x_n$ be a monotonically left $\tau$-convergent sequence to $x$, then by Proposition \ref{pro:FconvergesF-}, it holds that $F(x_n) \rightarrow F_-(x)$. By Lemma \ref{lema:medidapunto}, it holds that $F(x)=F_-(x)$, so $F(x_n) \rightarrow F(x)$, and by Lemma \ref{lemaright} and Remark \ref{ob:right}, $F$ is left $\tau$-continuous. Finally, by Corollary \ref{cor:continuity}, $F$ is $\tau$-continuous.
		
	\end{proof}
\end{pro}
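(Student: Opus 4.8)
The plan is to reduce $\tau$-continuity at $x$ to left $\tau$-continuity, and then to exploit the identity $F(x)=F_-(x)$ that the hypothesis $\mu(\{x\})=0$ forces. First I would recall that every cdf is right $\tau$-continuous by Proposition \ref{propcdf}, so by Corollary \ref{cor:continuity} it suffices to show that $F$ is left $\tau$-continuous at $x$.

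The arithmetic heart of the argument is Lemma \ref{lema:medidapunto}, which gives $F(x)=F_-(x)+\mu(\{x\})$; since $\mu(\{x\})=0$, this reads $F(x)=F_-(x)$. Now I would take an arbitrary monotonically left $\tau$-convergent sequence $x_n$ to $x$. By Proposition \ref{pro:FconvergesF-} we have $F(x_n)\to F_-(x)$, hence $F(x_n)\to F(x)$. Since $F$ is monotonically non-decreasing (Proposition \ref{propcdf}) and this convergence holds along \emph{every} monotonically left $\tau$-convergent sequence to $x$, Lemma \ref{lemaright} together with Remark \ref{ob:right} yields that $F$ is left $\tau$-continuous at $x$. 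Combining this with Corollary \ref{cor:continuity} gives $\tau$-continuity of $F$ at $x$.

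One point to keep in mind: when $x$ is left-isolated there is no monotonically left $\tau$-convergent sequence to $x$, so the hypothesis of Lemma \ref{lemaright} is met vacuously and the argument above still applies; alternatively, in that case one may simply invoke Corollary \ref{isolated-cont}, which already gives $\tau$-continuity at every left-isolated point irrespective of $\mu(\{x\})$. I do not expect a genuine obstacle here, since all the needed ingredients---right-continuity of cdf's, the $F$–$F_-$ relation, convergence of $F$ along monotone left-convergent sequences, and the monotone criterion for one-sided continuity---have been established in the preceding sections; the only care required is to apply Lemma \ref{lemaright} to the topology $\tau$ itself via Remark \ref{ob:right}.
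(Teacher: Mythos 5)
Your proposal is correct and follows essentially the same route as the paper: reduce to left $\tau$-continuity via Corollary \ref{cor:continuity}, use Lemma \ref{lema:medidapunto} to get $F(x)=F_-(x)$, apply Proposition \ref{pro:FconvergesF-} along monotonically left $\tau$-convergent sequences, and conclude with Lemma \ref{lemaright} and Remark \ref{ob:right}. Your extra remark about the left-isolated case being handled vacuously is a sensible clarification but does not change the argument.
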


Next we introduce a lemma that will be crucial to show that the set of discontinuity points of a cdf is at most countable.

\begin{lema}
	Let $\mu$ be a probability measure on $\X$ and $F$ its cdf. Then $\{x \in \X: \mu(\{x\})>0\}$ is countable.
	\begin{proof}
		For every integer $N$, the number of points satisfying $\mu(\{x\})>\frac{1}{N}$ is, at most, $N$. Hence, there are no more than a countable number of points with positive measure.
	\end{proof}
\end{lema}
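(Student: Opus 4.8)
The plan is to exploit the finite additivity of $\mu$ together with the fact that $\mu$ is a probability measure (total mass $1$), via a pigeonhole argument. First I would record that every singleton $\{x\}$ is Borel: since $(<x)$ and $(>x)$ are open in $\tau$, the set $\{x\}=X\setminus\big((<x)\cup(>x)\big)$ is closed, so $\mu(\{x\})$ is well defined (as is already implicitly used in Lemma~\ref{lema:medidapunto}).

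Next, for a fixed $N\in\N$ I would consider the set $A_N=\{x\in X:\mu(\{x\})>1/N\}$ and show that $\abs{A_N}\le N$. Indeed, if $x_1,\dots,x_{N+1}$ were $N+1$ pairwise distinct points of $A_N$, then $\{x_1\},\dots,\{x_{N+1}\}$ are pairwise disjoint Borel sets, so by ($\sigma$-)additivity and monotonicity of $\mu$,
\[
1=\mu(X)\ge\mu\Big(\bigcup_{i=1}^{N+1}\{x_i\}\Big)=\sum_{i=1}^{N+1}\mu(\{x_i\})>\frac{N+1}{N}>1,
\]
a contradiction. Hence each $A_N$ is finite.

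Finally I would observe that $\{x\in X:\mu(\{x\})>0\}=\bigcup_{N\in\N}A_N$, since $\mu(\{x\})>0$ forces $\mu(\{x\})>1/N$ for some $N\in\N$. A countable union of finite sets is countable, which completes the argument. I do not expect any real obstacle here: this is the classical proof, and the only point needing a remark specific to the present setting is the measurability of singletons in a LOTS, noted above.
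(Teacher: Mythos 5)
Your proposal is correct and is essentially the same argument as the paper's: the paper's one-line proof is exactly your pigeonhole step (at most $N$ points with $\mu(\{x\})>\frac{1}{N}$) followed by the countable union over $N$. You simply spell out the details the paper leaves implicit, including the worthwhile observation that singletons are Borel in a LOTS.
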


Next we collect two properties of a cdf $F_\mu$.

\begin{pro} Let $\mu$ be a probability measure on $\X$, then
	\begin{enumerate}
		\item $F_\mu$ is determined by a dense set, $D$, in $\X$ (with respect to the topology $\tau$) in its points with null measure,  that is, if for each $x \in D$ it holds that $F_\mu(x)=F_\delta(x)$, then $F_\mu(x)=F_\delta(x)$, for each $x \in \X$ with $\mu(\{x\})=0$ and $\delta(\{x\})=0$, where $F_\delta$ is the cdf of a probability measure, $\delta$, on $X$.
		
		\item The set of discontinuity points of $F_\mu$ with respect to the topology $\tau$ is countable.
	\end{enumerate}
	
	\begin{proof}
		\begin{enumerate}
			\item Let $x \in \X$ with $\mu(\{x\})=0$ and $\delta(\{x\})=0$. We distinguish two cases:
			
			\begin{itemize}
				\item Suppose that $x$ is left-isolated and right-isolated, then there exist $y, z \in X$ such that $]y,z[=\{x\}$, what implies that $x \in D$ due to the fact that $D$ is dense. Consequently, $F_\mu(x)=F_\delta(x)$.
				\item $x$ is not left-isolated or it is not right-isolated. If $x$ is not left-isolated, by Proposition \ref{pro:isolated}, there exists a sequence $x_n \stackrel{\tau}{\rightarrow} x$ such that $x_n< x_{n+1}<x$. Now, since $D$ is dense, it follows that there exists $d_n \in D$ such that $x_n<d_n<x_{n+1}$ and hence $d_n < d_{n+1}$, for each $n \in \N$. Hence, $d_n \rightarrow x$ in $\tau$. By hypothesis, we have that $F_\mu(d_n)=F_\delta(d_n)$. By Proposition \ref{pro:FconvergesF-}, $F_\mu(d_n) \rightarrow F_{\mu-}(x)$. But $F_{\mu-}(x)=F_{\mu}(x)$ since $\mu(\{x\})=0$ by Lemma \ref{lema:medidapunto}. Analogously, $F_\delta(d_n) \rightarrow F_\delta(x)$. Consequently, $F_{\mu}(x)=F_{\delta}(x)$.
				
				The case in which $x$ is not right-isolated can be proved analogously.
			\end{itemize}

			\item Let $x \in \X$. By Proposition \ref{pro:cont-munula}, we know that the fact that $F_\mu$ is not continuous at $x$ means that $\mu(\{x\})>0$. Since, by previous lemma, we have that $\{x \in \X: \mu(\{x\})>0\}$ is countable, we conclude that the set of discontinuity points is at most countable too.
		\end{enumerate}
	\end{proof}
\end{pro}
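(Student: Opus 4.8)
The plan is to dispatch the two items separately. Item (2) is essentially a corollary of what precedes: by Proposition \ref{pro:cont-munula} every point $x \in \X$ with $\mu(\{x\}) = 0$ is a $\tau$-continuity point of $F_\mu$, so the set of discontinuity points of $F_\mu$ is contained in $\{x \in \X : \mu(\{x\}) > 0\}$, and the lemma just proved shows this latter set is countable; hence so is the set of discontinuity points.

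For item (1), fix $x \in \X$ with $\mu(\{x\}) = 0 = \delta(\{x\})$ and assume $F_\mu(d) = F_\delta(d)$ for every $d$ in the dense set $D$. I would argue by cases on whether $x$ is isolated. If $x$ is both left- and right-isolated, then $\{x\}$ is a nonempty $\tau$-open set (as in the proof of Proposition \ref{pro:ANI}), so density of $D$ forces $x \in D$ and hence $F_\mu(x) = F_\delta(x)$ outright. If $x$ is not left-isolated (the not-right-isolated case being symmetric), Proposition \ref{pro:isolated} supplies a monotonically left $\tau$-convergent sequence $x_n$ with $x_n < x_{n+1} < x$ and $x_n \to x$. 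Since $x$ is not left-isolated, each interval $]x_n,x[$ is nonempty and $\tau$-open, so density lets me pick recursively $d_n \in D$ with $\max\{x_n, d_{n-1}\} < d_n < x$ (and $d_1 \in D \cap\, ]x_1,x[$); then $d_n$ is strictly increasing and, being squeezed between $x_n$ and $x$, converges to $x$, so it is monotonically left $\tau$-convergent to $x$. Applying Proposition \ref{pro:FconvergesF-} to both measures gives $F_\mu(d_n) \to F_{\mu-}(x)$ and $F_\delta(d_n) \to F_{\delta-}(x)$; by Lemma \ref{lema:medidapunto} together with $\mu(\{x\}) = 0 = \delta(\{x\})$ these limits are exactly $F_\mu(x)$ and $F_\delta(x)$. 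Since $d_n \in D$ we have $F_\mu(d_n) = F_\delta(d_n)$ for every $n$, and letting $n \to \infty$ yields $F_\mu(x) = F_\delta(x)$. In the not-right-isolated case one repeats the argument with a monotonically right $\tau$-convergent sequence $d_n \downarrow x$ in $D$, using right $\tau$-continuity of $F_\mu$ and $F_\delta$ (Proposition \ref{propcdf}) in place of Proposition \ref{pro:FconvergesF-} — which is even more direct, as no passage through $F_-$ is needed.

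The measure-theoretic limit passages are routine; the one place calling for care is the construction of the approximating sequence $d_n$ inside $D$ — it must be taken strictly monotone and strictly on the correct side of $x$ so that Proposition \ref{pro:FconvergesF-} (resp. right $\tau$-continuity) applies — together with the preliminary separation of the isolated-point case, where no nontrivial one-sided sequence through $x$ exists and the conclusion instead comes from $x$ itself lying in $D$. I expect this sequence construction to be the main, if modest, obstacle.
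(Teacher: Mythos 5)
Your proposal is correct and follows essentially the same route as the paper: the isolated case via $x\in D$, the non-left-isolated case via a strictly increasing sequence in $D$ converging to $x$ combined with Proposition \ref{pro:FconvergesF-} and Lemma \ref{lema:medidapunto}, and item (2) as a direct consequence of Proposition \ref{pro:cont-munula} and the countability lemma. Your observation that the not-right-isolated case can be handled directly by right $\tau$-continuity (without passing through $F_-$) is a small simplification of what the paper leaves as ``analogous,'' but it does not change the substance of the argument.
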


\section{The inverse of a cdf}

In this section, we see how to define the pseudo-inverse of a cdf $F$ defined on $\X$ and we gather some properties which relate this function to both $F$ and $F_-$. Its properties are similar to those which characterizes the pseudo-inverse in the classical case (see, for example, \cite[Th. 1.2.5]{Baz}). Moreover, we see that it is measurable.

Now, we recall the definition of this function in the classical case (see Section 1) so as to give a similar one in the context of a linearly ordered topological space. However, there exists a problem when we mention the infimum of a set, since it is not true that every set has infimum. Hence, we restric that definition to those points which let us talk about the infimum of a set as next definition shows.

\begin{defi}
	Let $F$ be a cdf. We define the pseudo-inverse of $F$ as $G:[0,1] \rightarrow X$ given by $G(x)=\inf \{y \in \X: F(y) \geq x\}$ for each $x \in [0,1]$ such that there exists the infimum of $\{y \in \X: F(y) \geq x\}$. 
\end{defi}

According to the previous definition, it is clear that

\begin{pro}\label{pro:mon}
	$G$ is monotonically non-decreasing.
	
	\begin{proof}
		Let $x, y \in [0,1]$ with $x<y$. Note that $\{z \in \X: F(z) \geq y\} \subseteq \{z \in \X: F(z) \geq x\}$ and it follows that $\inf\{z \in \X: F(z) \geq x\} \leq \inf\{z \in \X: F(z) \geq y\}$, that is, $G(x) \leq G(y)$, what means that $G$ is monotonically non-decreasing.
	\end{proof}
\end{pro}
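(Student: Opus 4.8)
The plan is to use the elementary order-theoretic fact that passing to a subset can only raise the infimum. First I would fix $x,y\in[0,1]$ both lying in the domain of $G$ (recall that $G$ is only defined at those arguments for which the set $\{z\in\X:F(z)\geq\cdot\}$ admits an infimum) with $x<y$. The decisive observation is the set inclusion
\[
\{z\in\X: F(z)\geq y\}\subseteq\{z\in\X:F(z)\geq x\},
\]
which holds simply because $F(z)\geq y$ together with $y>x$ forces $F(z)\geq x$.

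Next I would translate this inclusion into a comparison of infima using Lemma-style reasoning about lower bounds: if $A\subseteq B$ and both $\inf A$ and $\inf B$ exist, then every lower bound of $B$ is a lower bound of $A$, i.e.\ $B^l\subseteq A^l$, and since $\inf A=\max A^l$ while $\inf B=\max B^l$, it follows that $\inf B\leq\inf A$. Applying this with $A=\{z\in\X:F(z)\geq y\}$ and $B=\{z\in\X:F(z)\geq x\}$ gives $G(x)=\inf B\leq\inf A=G(y)$, which is exactly the asserted monotonicity.

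There is no genuine obstacle here; the statement is essentially a one-line consequence of antitonicity of the infimum with respect to set inclusion. The only point that deserves a moment's care is that $G$ is a partial function, so the claim is to be understood for arguments at which $G$ is defined, and the displayed inclusion already makes transparent that the set defining $G(x)$ contains the set defining $G(y)$.
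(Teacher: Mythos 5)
Your proof is correct and follows essentially the same route as the paper: the inclusion $\{z\in X: F(z)\geq y\}\subseteq\{z\in X: F(z)\geq x\}$ plus antitonicity of the infimum under set inclusion. Your extra remark about $G$ being a partial function is a reasonable precaution that the paper handles by its standing convention that $G$ is only applied where it is defined.
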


\begin{lema}\label{lema:sucdec}
	Let $a=\inf \{a_n: n \in \N\}$ (respectively $a=\sup \{a_n: n\in \N\}$) where $a_n$ is a sequence such that $a_{n+1}<a_n$ (respectively $a_{n+1}>a_n$), for each $n \in \N$. Then $a_n \stackrel{\tau}{\rightarrow} a$.
	\begin{proof}
		Let $a_n$ be a sequence in $X$ such that $a_{n+1}<a_n$, for each $n \in \N$ and suppose that there exists $a=\inf \{a_n:n \in \N\}$. Let $b, c \in X \cup \{-\infty,\infty\}$ be such that $b<a<c$. Suppose that $a_n \geq c$, for each $n \in \N$, then $\inf \{a_n: n \in \N\}\geq c>a$, a contradiction with the fact that $a=\inf\{a_n: n\in \N\}$. Hence, there exists $n_0 \in \N$ such that $a_{n_0}<c$. What is more, $a_n <c$, for each $n \geq n_0$ since $a_{n+1} <a_n$, for each $n \in \N$. Consequently, $a_n \stackrel{\tau}{\rightarrow} a$. 
		
		The case in which $a=\sup \{a_n: n\in \N\}$ and $a_{n+1}>a_n$ can be proven analogously.
	\end{proof}
\end{lema}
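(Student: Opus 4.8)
The plan is to verify convergence directly against the basic open neighborhoods of $a$. By Remark \ref{ob:base}, the intervals $]b,c[$ with $b,c \in \X \cup \{-\infty,\infty\}$ and $b < a < c$ form a neighborhood basis of $a$ in $(\X,\tau)$, so it suffices to show that for every such pair $b,c$ there exists $n_0 \in \N$ with $a_n \in \,]b,c[$ for all $n \geq n_0$. I will treat the decreasing/infimum case; the increasing/supremum case follows by reversing the order.

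First I would dispose of the lower endpoint: since $a = \inf\{a_n : n\in\N\}$, in particular $a \leq a_n$ for every $n$, whence $b < a \leq a_n$ and so $a_n > b$ for all $n$ with no restriction needed. For the upper endpoint I would argue by contradiction: if $a_n \geq c$ held for every $n$, then $c$ would be a lower bound of $\{a_n : n\in\N\}$, so $c \leq \inf\{a_n : n\in\N\} = a$, contradicting $a < c$. Hence there is some $n_0$ with $a_{n_0} < c$, and since the sequence is strictly decreasing ($a_{n+1} < a_n$ for each $n$), we get $a_n \leq a_{n_0} < c$ for all $n \geq n_0$. Combining the two endpoints yields $a_n \in \,]b,c[$ for all $n \geq n_0$, which is precisely $a_n \stackrel{\tau}{\rightarrow} a$.

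The only delicate point—and the one I would be most careful about—is the endpoint bookkeeping: the argument must remain uniform when $a$ happens to be $\min \X$ or $\max \X$, so that $]b,c[$ is really a neighborhood of $a$. This is handled cleanly by the convention of allowing $b = -\infty$ or $c = \infty$ among the basic neighborhoods, exactly as in Remark \ref{ob:base}; with that in hand, nothing else is needed beyond the definition of infimum and the monotonicity hypothesis, so I do not expect any substantive obstacle.
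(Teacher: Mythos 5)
Your proposal is correct and follows essentially the same argument as the paper: check convergence against the basic neighborhoods $]b,c[$, handle the upper endpoint by contradiction using the definition of the infimum, and use the strict monotonicity of the sequence to get the tail bound. The only difference is that you make explicit the (trivial) lower-endpoint bound $a_n \geq a > b$, which the paper leaves implicit.
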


Hereinafter, when we apply $G$ to a point, we assume that $G$ is defined in that point.

\begin{pro}\label{pro:inverse}
	Let $F$ be a cdf. Then:
	\begin{enumerate}
		\item $G(F(x)) \leq x$, for each $x \in X$.
		\item $F(G(r)) \geq r$, for each $r \in [0,1]$.
	\end{enumerate}
	\begin{proof}
		\begin{enumerate}
			\item Indeed, $x \in \{z \in \X: F(z) \geq F(x)\}$, and hence $\inf\{z \in \X: F(z) \geq F(x)\} \leq x$, which is equivalent to $G(F(x)) \leq x$. This proves the first item.
			\item Now let $y = G(r)=\inf\{z \in \X: F(z) \geq r\}$. If $y=\min\{z \in \X: F(z) \geq r\}$, it is clear that $F(y) \geq r$. Suppose that $y \neq \min \{z \in \X: F(z) \geq r\}$, then by Proposition \ref{pro:leftboundseq} there exists a sequence $y_n \in \{z \in \X: F(z) \geq r\}$ such that $y_{n+1}<y_n$ and $\{y_n: n \in \N\}^l=\{z \in \X: F(z) \geq r\}^l$. What is more, by Lemma \ref{lema:infA}, it holds that $y=\inf\{y_n: n \in \N\}$. Hence, Lemma \ref{lema:sucdec} let us claim that $y_n \stackrel{\tau}{\rightarrow} y$. Consequently, the right $\tau$-continuity of $F$ gives us that $F(y_n) \rightarrow F(y)$. Moreover, $F(y_n) \geq r$ since $y_n \in \{z \in \X: F(z) \geq r\}$. If we join this fact with the fact that $F(y_n) \rightarrow F(y)$, we conclude that $F(y) \geq r$. This proves the second item.
		\end{enumerate}
	
	\end{proof}
\end{pro}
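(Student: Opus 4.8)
The plan is to handle the two items separately, noting that item (1) is essentially immediate from the definition while item (2) requires a limiting argument using the right $\tau$-continuity of $F$ established in Proposition \ref{propcdf}.

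For item (1), I would simply observe that $x$ itself belongs to the set $\{z \in \X : F(z) \geq F(x)\}$, since trivially $F(x) \geq F(x)$. Therefore the infimum of that set (which we are assuming exists, per the standing convention that $G$ is evaluated only where defined) is at most $x$, and by definition this infimum is $G(F(x))$. Hence $G(F(x)) \leq x$. This disposes of (1) in one line.

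For item (2), write $y = G(r) = \inf\{z \in \X : F(z) \geq r\}$ and set $A = \{z \in \X : F(z) \geq r\}$. The goal is $F(y) \geq r$. I would split into two cases according to whether the infimum $y$ is attained as a minimum of $A$. If $y = \min A$, then $y \in A$ and $F(y) \geq r$ directly. If $y \neq \min A$, i.e. $A$ has no minimum, then by Proposition \ref{pro:leftboundseq} there is a strictly decreasing sequence $y_n \in A$ with $\{y_n : n \in \N\}^l = A^l$; by Lemma \ref{lema:infA} this gives $y = \inf\{y_n : n \in \N\}$, and then Lemma \ref{lema:sucdec} yields $y_n \stackrel{\tau}{\to} y$. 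Since the $y_n$ are eventually (in fact always) $>y$ — or at least the convergence is from above in the relevant sense — the right $\tau$-continuity of $F$ gives $F(y_n) \to F(y)$. Each $F(y_n) \geq r$ because $y_n \in A$, so passing to the limit gives $F(y) \geq r$.

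The main subtlety I anticipate is ensuring that the convergence $y_n \to y$ actually triggers right $\tau$-continuity of $F$: right $\tau$-continuity was phrased in terms of right $\tau$-convergent (or monotonically right $\tau$-convergent) sequences, so one needs that $y_n \geq y$ for all $n$, which holds since $y$ is a lower bound of $\{y_n\}$, and that the sequence is not eventually constant (handled by the $y_n$ being strictly decreasing, so they are never equal to their infimum $y$). One should also double-check the degenerate possibility that $\{z : F(z) \geq r\}$ is empty, but this cannot occur when $G(r)$ is defined, since the infimum of the empty set would not exist in $X$ in general; under the standing assumption that $G$ is applied only where defined, the set $A$ is nonempty. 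With those points checked, both halves go through cleanly.
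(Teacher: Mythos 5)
Your proposal is correct and follows essentially the same route as the paper: item (1) by the trivial membership $x \in \{z : F(z) \geq F(x)\}$, and item (2) by splitting on whether the infimum is a minimum and otherwise invoking Proposition \ref{pro:leftboundseq}, Lemma \ref{lema:infA}, Lemma \ref{lema:sucdec}, and the right $\tau$-continuity of $F$. Your additional remarks on why the sequence is genuinely right $\tau$-convergent and why the set is nonempty are sound and only make explicit what the paper leaves implicit.
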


We get, as an immediate corollary, that

\begin{cor}\label{cor:inverse}
	$G(r) \leq x$ if, and only if $r \leq F(x)$, for each $x \in X$ and each $r \in [0,1]$.
\end{cor}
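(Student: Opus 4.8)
The plan is to derive both implications directly from Proposition \ref{pro:inverse} together with the monotonicity of $F$ and of $G$, with no further machinery needed; the statement is essentially the ``Galois connection'' packaging of the two inequalities $G(F(x))\le x$ and $F(G(r))\ge r$.

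First I would treat the easy direction. Assume $r\le F(x)$. Then $x$ belongs to the set $\{z\in\X : F(z)\ge r\}$, so it is one of the elements over which the infimum defining $G(r)$ is taken; hence $G(r)=\inf\{z\in\X:F(z)\ge r\}\le x$. (Here I am using the standing convention that $G(r)$ is assumed defined whenever we write it, so the infimum exists.)

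For the converse, assume $G(r)\le x$. Apply $F$, which is monotonically non-decreasing by Proposition \ref{propcdf}.1, to get $F(G(r))\le F(x)$. Now invoke Proposition \ref{pro:inverse}.2, which gives $F(G(r))\ge r$. Chaining these, $r\le F(G(r))\le F(x)$, which is the desired inequality $r\le F(x)$.

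I do not anticipate a genuine obstacle here: the only points to be careful about are (i) making sure $G(r)$ is defined, which is covered by the convention stated just before Proposition \ref{pro:inverse}, and (ii) citing the monotonicity of $F$ (Proposition \ref{propcdf}.1) rather than that of $G$, since it is $F$ that we compose with in the second implication. One could equally phrase the forward direction using $G(F(x))\le x$ (Proposition \ref{pro:inverse}.1) and monotonicity of $G$ (Proposition \ref{pro:mon}), but the route through $F$ is marginally shorter, so that is the one I would write up.
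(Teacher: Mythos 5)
Your proof is correct and matches the paper's intent exactly: the paper states this as an immediate corollary of Proposition \ref{pro:inverse} with no written proof, and your two-line derivation (membership of $x$ in the defining set for one direction, monotonicity of $F$ plus $F(G(r))\ge r$ for the other) is precisely the argument being left to the reader.
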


Next result collects some properties of $G$ which arise from some relationships between $F$ and $F_-$ and some conditions on them.

\begin{pro}\label{pro:inverse:F}
	Let $F$ be a cdf and let $x \in \X$ and $r \in [0,1]$. Then:
	\begin{enumerate}
		\item $F(x)<r$ if, and only if $G(r)>x$.
		\item If $F_-(x)<r$, then $x \leq G(r)$.
		\item If $F_-(x) <r \leq F(x)$, then $G$ is defined in $r$ and $G(r)=x$.
		\item If $r < F_-(x)$, then $G(r)<x$.
		\item If $r=F_-(x)$, then $G(r) \leq x$.
	\end{enumerate}
	\begin{proof}
		\begin{enumerate}
			\item Note that it is an immediate consequence of Corollary \ref{cor:inverse}.
			\item Suppose that $G(r)<x$, then $\mu(<x) \geq \mu(\leq G(r))$ or, equivalently, $F_-(x) \geq F(G(r)) \geq r$, that is, $F_-(x) \geq r$.
			\item Let $x \in \X$ and $r \in [0,1]$ be such that $F_-(x)<r \leq F(x)$. First, note that if $y<x$, then $F(y) \leq \sup F(<x)=F_-(x)<r$ and hence $x=\inf\{y \in X:F(y) \geq r\}$. It follows that $G$ is defined in $r$ and $x=G(r)$.
			\item Let $x \in \X$ and $r \in [0,1]$. Suppose that $r<F_-(x)$. Since $F_-(x)=\sup F(<x)$, there exists $y <x$ such that $r<F(y)\leq F_-(x)$. Since $F(y)>r$, then $y \geq \inf\{z \in \X: F(z) \geq r\} = G(r)$. We conclude that $G(r)<x$. 
			
			\item Suppose that $r=F_-(x)$. The fact that $F_-(x) \leq F(x)$, for each $x \in \X$ gives us that $F(x) \geq r$, which is equivalent, by Corollary \ref{cor:inverse}, to $G(r) \leq x$.
		\end{enumerate}
	\end{proof}
\end{pro}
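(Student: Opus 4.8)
The plan is to treat the five items in turn, each reducing to the basic relation between $F$ and $G$ recorded in Corollary~\ref{cor:inverse} and Proposition~\ref{pro:inverse}, together with the identity $\sup F(<x)=F_-(x)$ of Proposition~\ref{pro:sup} and the decomposition $F(x)=F_-(x)+\mu(\{x\})$ of Lemma~\ref{lema:medidapunto}, which in particular gives $F_-(x)\le F(x)$.

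Items~(1) and~(5) are immediate. For~(1), Corollary~\ref{cor:inverse} says that $G(r)\le x$ if and only if $r\le F(x)$; since both $X$ and $[0,1]$ are totally ordered, negating each side turns this into the statement that $G(r)>x$ if and only if $F(x)<r$. For~(5), the hypothesis gives $r=F_-(x)\le F(x)$, and Corollary~\ref{cor:inverse} then yields $G(r)\le x$. Item~(2) I would argue by contradiction: if $G(r)<x$, then $(\le G(r))\subseteq(<x)$, so monotonicity of $\mu$ gives $F(G(r))=\mu(\le G(r))\le\mu(<x)=F_-(x)$; combined with $F(G(r))\ge r$ from Proposition~\ref{pro:inverse}.2 this forces $r\le F_-(x)$, contradicting $F_-(x)<r$, so $x\le G(r)$.

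For~(4), first observe that $F_-(x)>r\ge 0$ forces $\mu(<x)>0$, hence $(<x)\neq\emptyset$ and $x\neq\min X$, so Proposition~\ref{pro:sup} applies and $r<F_-(x)=\sup F(<x)$; by the definition of supremum there is then some $y<x$ with $F(y)>r$, whence $y\in\{z\in X:F(z)\ge r\}$ and $G(r)=\inf\{z\in X:F(z)\ge r\}\le y<x$. For~(3), set $S:=\{y\in X:F(y)\ge r\}$ and show that $x=\min S$; this simultaneously proves that $\inf S$ exists (so that $G$ is defined at $r$) and that $G(r)=x$. Indeed $F(x)\ge r$ gives $x\in S$, while for any $y<x$ we have $(\le y)\subseteq(<x)$, hence $F(y)=\mu(\le y)\le\mu(<x)=F_-(x)<r$ and $y\notin S$; thus $x$ is a lower bound of $S$ lying in $S$, i.e.\ $x=\min S$, and therefore $G(r)=x$.

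I expect the only delicate point to be the claim in~(3) that $G$ is actually \emph{defined} at $r$: because $G$ is a partial function, defined precisely at those values where the relevant infimum exists, one cannot simply manipulate $G(r)$ but must first exhibit the infimum — and the hypothesis $F_-(x)<r\le F(x)$ is exactly what forces $S$ to attain its minimum at $x$. The remaining items are routine once Corollary~\ref{cor:inverse} and Proposition~\ref{pro:inverse} are available.
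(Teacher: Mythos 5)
Your proof is correct and follows essentially the same route as the paper's: items (1) and (5) via Corollary~\ref{cor:inverse}, item (2) by the same contrapositive using $F(G(r))\ge r$, and items (3) and (4) via the identification of $x$ as a lower bound of (resp.\ a witness below) the set $\{y:F(y)\ge r\}$. Your two small refinements — verifying $x\neq\min X$ before invoking Proposition~\ref{pro:sup} in item (4), and bypassing Proposition~\ref{pro:sup} in item (3) by using monotonicity of $\mu$ directly — are welcome extra care but do not change the argument.
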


We prove another property of $G$.
\begin{pro}\label{inverse}
	$G$ is left $\tau$-continuous.
	\begin{proof}
	
			Let $(r_n)$ be a sequence in $[0,1[$ which is left convergent to $r \in [0,1[$ with $r_n \neq r$. Since $r_n \leq r$, by the monotonicity of $G$ (see Proposition \ref{pro:mon}) we have that $G(r_n) \leq G(r)$. Now we prove that $G(r)=\sup \{G(r_n): n \in \N\}$. For this purpose, let $x \in \{G(r_n): n \in \N\}^u$ and suppose that $x<G(r)$. By Proposition \ref{pro:inverse:F}.1, it holds that $F(x)<r$, so there exists $n \in \N$ such that $F(x)<r_n$. On the other hand, since $x \in \{G(r_n): n \in \N\}^u$ then $G(r_n) \leq x$, for each $n \in \N$. By the monotonicity of $F$ we have that $F(G(r_n)) \leq F(x)$ and, hence, by Proposition \ref{pro:inverse}.1, $r_n \leq F(x)$ since $F(G(r_n)) \geq r_n$. If we join this fact with the fact that $F(x)<r_n$, for some $n \in \N$, we conclude that $r_n<r_n$, a contradiction.
			
			It follows, by Lemma \ref{lema:sucdec}, that $(G(r_n))$ $\tau$-converges to $G(r)$.

	\end{proof}
\end{pro}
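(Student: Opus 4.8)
The plan is to verify left $\tau$-continuity of $G$ directly at the level of sequences, which is what the notion asks for here. I would fix $r \in [0,1[$ at which $G$ is defined and a left $\tau$-convergent sequence $(r_n)$ to $r$ in $[0,1[$ (so $r_n \le r$ and $r_n \to r$), and, after discarding a possibly eventually-constant tail, assume $r_n < r$ for all $n$; the goal is $G(r_n) \stackrel{\tau}{\rightarrow} G(r)$. By Remark \ref{ob:base}, a neighbourhood basis of $G(r)$ in $\tau$ consists of the sets $]a,b[$ with $a,b \in \X \cup \{-\infty,\infty\}$ and $a < G(r) < b$, so it is enough to show that, for every such pair $a,b$, one has $G(r_n) \in\, ]a,b[$ for all sufficiently large $n$.

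The inequality $G(r_n) < b$ I would get for free from the monotonicity of $G$ (Proposition \ref{pro:mon}): from $r_n \le r$ we get $G(r_n) \le G(r) < b$ for every $n$. For the inequality $G(r_n) > a$, there is nothing to prove when $a = -\infty$; otherwise $a \in \X$ with $a < G(r)$, and here I would invoke Proposition \ref{pro:inverse:F}.1 to turn $G(r) > a$ into $F(a) < r$. Since $r_n \to r$ in the order topology of $[0,1]$ and $(>F(a))$ is an open neighbourhood of $r$ there, some tail satisfies $r_n > F(a)$; then the contrapositive of Corollary \ref{cor:inverse}, namely $G(s) > a \Leftrightarrow s > F(a)$, gives $G(r_n) > a$ on that tail. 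Putting the two estimates together finishes the verification. As a byproduct the lower estimate shows $G(r) = \sup\{G(r_n): n \in \N\}$, so one could instead close the argument, after passing to a strictly increasing subsequence, by Lemma \ref{lema:sucdec}; but the direct route above avoids needing any subsequence.

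The only step with real content is the lower bound $G(r_n) > a$ for large $n$, and it runs entirely on the single equivalence $G(s) > a \Leftrightarrow F(a) < s$ (Proposition \ref{pro:inverse:F}.1 / Corollary \ref{cor:inverse}) together with the bare topological convergence $r_n \to r$ in $[0,1]$. So the main obstacle is not analytic but bookkeeping: one must run the argument against the order-neighbourhoods of $G(r)$ from Remark \ref{ob:base} --- which already absorb the isolated-point cases and the case $G(r) = \min \X$, where $-\infty$ is the only admissible left endpoint --- and must resist the temptation to assume the sequence $(r_n)$ is monotone, since monotonicity of $(r_n)$ is never used.
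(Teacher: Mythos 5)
Your proof is correct, and it closes the argument by a genuinely different mechanism than the paper, even though both hinge on the same key fact (Proposition \ref{pro:inverse:F}.1, equivalently Corollary \ref{cor:inverse}). The paper first establishes $G(r)=\sup\{G(r_n):n\in\N\}$ by contradiction with a hypothetical upper bound $x<G(r)$, and then appeals to Lemma \ref{lema:sucdec} to convert that supremum into $\tau$-convergence; you instead test convergence directly against the basic order-neighbourhoods $]a,b[$ of $G(r)$, getting $G(r_n)<b$ from monotonicity and $G(r_n)>a$ eventually from $F(a)<r$ plus $r_n\to r$. What your route buys is that it bypasses Lemma \ref{lema:sucdec} altogether: that lemma is stated for \emph{strictly} monotone sequences, whereas $(G(r_n))$ is only non-decreasing and may repeat values or be eventually constant, so the paper's final step really needs a small extra remark (pass to a strictly increasing subsequence, or treat the eventually-constant case separately) that your direct neighbourhood argument renders unnecessary. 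One cosmetic point: the preliminary step of ``discarding an eventually-constant tail'' to assume $r_n<r$ is not needed, since both of your estimates work verbatim for $r_n\le r$ (when $r_n=r$ the lower bound $G(r_n)=G(r)>a$ is immediate); and, as throughout this section of the paper, the argument tacitly uses the standing convention that $G$ is defined at $r$ and at each $r_n$.
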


Next proposition collects some properties of $F$ and $F_-$ which arise from considering some conditions on $G$.

\begin{pro}\label{pro:inverse:G}
	Let $F$ be a cdf and let $x \in \X$ and $r \in [0,1]$. Then:
	\begin{enumerate}
		\item $G(r)>x$ if, and only if $F(x)<r$.
		\item If $G(r)=x$, then $F_-(x) \leq r \leq F(x)$.
		\item If $G(r)<x$, then $r \leq F_-(x)$.
	\end{enumerate}
	
	\begin{proof}
		\begin{enumerate}
			\item Note that this item is the same as the first item of Proposition \ref{pro:inverse:F}.
			\item Suppose that $G(r)=x$ and that $r >F(x)$, by item 1 it follows that $G(r)>x$, what is a contradiction with the fact that $G(r)=x$.
			
			Now suppose that $r <F_-(x)$, then item 4 of Proposition \ref{pro:inverse:F} gives us that $G(r)<x$, what is a contradiction with the fact that $G(r)=x$. 
			
			We conclude that $F_-(x) \leq r \leq F(x)$.
			\item It is equivalent to Proposition \ref{pro:inverse:F}.2.
		\end{enumerate}
	\end{proof}
\end{pro}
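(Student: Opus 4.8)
The plan is to obtain all three items with essentially no new work, by reading off consequences of the results already proved about $G$ — namely Corollary \ref{cor:inverse} and Proposition \ref{pro:inverse:F} — and using repeatedly that the order on $\X$ is linear, so that the negation of ``$y \leq z$'' is exactly ``$y > z$''. Throughout I keep the standing convention that whenever $G(r)$ is written, $G$ is assumed to be defined at $r$.

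For item 1, I would note that the statement is literally the same biconditional as Proposition \ref{pro:inverse:F}.1, just with the two sides interchanged; equivalently, it is the contrapositive of Corollary \ref{cor:inverse}, since negating $G(r)\leq x$ gives $G(r)>x$ and negating $r\leq F(x)$ gives $F(x)<r$. Hence nothing needs to be computed here.

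For item 2, I would assume $G(r)=x$ and prove the two inequalities separately by contradiction. If $r>F(x)$, then item 1 yields $G(r)>x$, contradicting $G(r)=x$, so $r\leq F(x)$. If instead $r<F_-(x)$, then Proposition \ref{pro:inverse:F}.4 yields $G(r)<x$, again contradicting $G(r)=x$, so $F_-(x)\leq r$; together these give $F_-(x)\leq r\leq F(x)$. For item 3 I would take the contrapositive of Proposition \ref{pro:inverse:F}.2: that proposition asserts that $F_-(x)<r$ implies $x\leq G(r)$, so under the hypothesis $G(r)<x$ (which is the negation of $x\leq G(r)$ by linearity of $\leq$) we cannot have $F_-(x)<r$, i.e. $r\leq F_-(x)$.

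There is no genuine obstacle here; the only point requiring a little care is the repeated silent passage between ``$\neg(a\leq b)$'' and ``$a>b$'', which is legitimate precisely because $\X$ is a LOTS, and the fact that items 4 and 2 of Proposition \ref{pro:inverse:F} are stated with the hypotheses in a form amenable to this negation. In short, the proposition is a bookkeeping corollary packaging the earlier structural facts about $G$ into the direction ``information about $G(r)$ versus $x$ $\Rightarrow$ information about $F(x)$, $F_-(x)$ versus $r$''.
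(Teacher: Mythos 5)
Your proposal is correct and follows exactly the paper's own route: item 1 is Proposition \ref{pro:inverse:F}.1 restated, item 2 is the same two contradictions via item 1 and Proposition \ref{pro:inverse:F}.4, and item 3 is the contrapositive of Proposition \ref{pro:inverse:F}.2. Nothing further is needed.
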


Some consequences that arise from the previous propositions are collected next.

\begin{cor}
	Let $F$ be a cdf and $r \in [0,1]$. Then:
	\begin{enumerate}
		\item $F_-(G(r))\leq r  \leq F(G(r))$.
		\item If $F(G(r)) >r$, then $\mu(\{G(r)\})>0$.
	\end{enumerate}
	
	\begin{proof}
		\begin{enumerate}
			\item Let $r \in [0,1]$. On the one hand, suppose that $F_-(G(r))>r$, then, by item 4 of Proposition \ref{pro:inverse:F}, it holds that $G(r)<G(r)$, what is a contradiction. Hence, $F_-(G(r)) \leq r$.
			
			On the other hand, the inequality $r \leq F(G(r))$ is clear if we take into account Proposition \ref{pro:inverse}.
			
			\item By Lemma \ref{lema:medidapunto} $F(x)=F_-(x)+\mu(\{x\})$, for each $x \in \X$, so we have that $F(G(r))=F_-(G(r))+\mu(\{G(r)\})$. If $F(G(r))>r$, it holds that $F_-(G(r))+\mu(\{G(r)\})>r$. Moreover if we join this fact with the previous item, we conclude that $\mu(\{G(r)\})>0$. 
		\end{enumerate}
	\end{proof}
\end{cor}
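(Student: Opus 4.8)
The plan is to obtain both items as short consequences of results already proved about $G$, $F$, and $F_-$. Throughout I would keep in mind the standing convention that whenever $G(r)$ is written it is assumed that the set $\{y \in \X : F(y) \geq r\}$ possesses an infimum, so that $G(r)$ is defined; the corollary is to be understood under this proviso.

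For item~1, the right-hand inequality $r \leq F(G(r))$ is exactly the content of Proposition~\ref{pro:inverse}.2 applied to $r$, so nothing further is required there. For the left-hand inequality $F_-(G(r)) \leq r$, I would argue by contradiction: if $F_-(G(r)) > r$, then applying Proposition~\ref{pro:inverse:F}.4 with the point $x := G(r)$ (legitimate, since that proposition holds for every $x \in \X$ and here $r < F_-(x)$) yields $G(r) < G(r)$, which is absurd. Hence $F_-(G(r)) \leq r$, and item~1 follows. Equivalently, one could simply quote Proposition~\ref{pro:inverse:G}.2 with $x = G(r)$, which already packages both inequalities at once; this is not circular, since that proposition was itself derived from parts~1 and~4 of Proposition~\ref{pro:inverse:F}.

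For item~2, I would invoke Lemma~\ref{lema:medidapunto}, which gives $F(y) = F_-(y) + \mu(\{y\})$ for every $y \in \X$; evaluating at $y = G(r)$ produces the identity $F(G(r)) = F_-(G(r)) + \mu(\{G(r)\})$. Combining this with the bound $F_-(G(r)) \leq r$ from item~1, the hypothesis $F(G(r)) > r$ forces $\mu(\{G(r)\}) = F(G(r)) - F_-(G(r)) > r - F_-(G(r)) \geq 0$, which is precisely the assertion.

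There is essentially no obstacle here: the corollary is a repackaging of Proposition~\ref{pro:inverse}, Proposition~\ref{pro:inverse:F}, and Lemma~\ref{lema:medidapunto}. The only minor points to watch are keeping track of which of $F$ and $F_-$ appears at each stage, the mildly self-referential substitution $x = G(r)$ (which is harmless, as the cited propositions are universally quantified in $x$), and the reminder that the whole statement is vacuous at those $r$ for which $G(r)$ is not defined.
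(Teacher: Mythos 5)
Your proposal is correct and follows essentially the same route as the paper: the left inequality of item~1 by contradiction via Proposition~\ref{pro:inverse:F}.4, the right inequality from Proposition~\ref{pro:inverse}, and item~2 by combining Lemma~\ref{lema:medidapunto} with item~1. The alternative remark about quoting Proposition~\ref{pro:inverse:G}.2 directly is a harmless shortcut, not a different argument.
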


\begin{cor} \label{cor:fgrr}
	Let $r \in [0,1]$. If $\mu(\{G(r)\})=0$, then $F(G(r))=r$.
\end{cor}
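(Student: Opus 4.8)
The plan is to obtain this as a one-line consequence of the preceding corollary. Recall that its second item asserts that $F(G(r)) > r$ implies $\mu(\{G(r)\}) > 0$; reading this contrapositively, the hypothesis $\mu(\{G(r)\}) = 0$ forces $F(G(r)) \leq r$. On the other hand, Proposition \ref{pro:inverse}.2 (equivalently, the first item of that same corollary) gives $r \leq F(G(r))$. Combining the two inequalities yields $F(G(r)) = r$, as desired.

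An alternative route, which I would mention only if a self-contained argument is preferred, is to work through $F_-$ directly: by the first item of the preceding corollary we have $F_-(G(r)) \leq r \leq F(G(r))$, while Lemma \ref{lema:medidapunto} gives $F(G(r)) = F_-(G(r)) + \mu(\{G(r)\})$, which equals $F_-(G(r))$ under the hypothesis $\mu(\{G(r)\}) = 0$. Hence the displayed chain of inequalities collapses to a chain of equalities, and in particular $F(G(r)) = r$.

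There is essentially no obstacle: the statement is an immediate corollary of material already in hand. The only point requiring a small amount of care is the standing convention, recorded just before Proposition \ref{pro:inverse}, that writing $G(r)$ presupposes that $G$ is defined at $r$, i.e. that $\{y \in X : F(y) \geq r\}$ admits an infimum; under this convention every result invoked above applies verbatim, so the proof goes through without further comment.
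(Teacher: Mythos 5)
Your proof is correct and matches the paper's evident intent: the paper states this corollary without a proof precisely because it follows immediately from the two items of the preceding corollary, which is exactly the argument you give (contrapositive of item 2 plus the inequality $r \leq F(G(r))$ from item 1). Your alternative route via $F_-$ and Lemma \ref{lema:medidapunto} is just the same argument unwound, so there is nothing further to add.
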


Now, we introduce some results in order to characterize the injectivity of $G$ and $F$.

\begin{pro} \label{pro:giny}
	$\mu(\{x\})=0$, for each $x \in \X$ if, and only if $G$ is injective.
	\begin{proof}
		$\Rightarrow)$ It immediately follows from the second item of Proposition \ref{pro:inverse:G}. Indeed, this proposition gives us that if $G(r)=x$, then $F_-(x) \leq r \leq F(x)$. Suppose that there exists $r, s \in X$ such that $r \neq s$ with $G(r)=G(s)=x$, then $F_-(G(r)) \leq r \leq F(G(r))$ and  $F_-(G(r)) \leq s \leq F(G(r))$. Since $\mu(\{G(r)\})=0$, it holds that $F_-(G(r))=F(G(r))=r=s$, an hence $G$ is injective.
		
		$\Leftarrow)$ Suppose that there exists $x \in \X$ such that $\mu(\{x\})>0$, then $F_-(x)<F(x)$. Now let $r \in [0,1]$ be such that $F_-(x)<r<F(x)$. By Proposition \ref{pro:inverse:F}.3 we have that $G$ is defined in $r$ and $G(r)=G(F(x))=x$, for each $r \in ]F_-(x), F(x)[$, what is a contradiction with the fact that $G$ is injective.
	\end{proof}
\end{pro}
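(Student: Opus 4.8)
The plan is to prove the two implications separately, in each case just feeding the earlier structural results about $G$, $F$ and $F_-$ into a short sandwich argument.

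For the direct implication, I would assume $\mu(\{x\})=0$ for every $x \in \X$ and take $r,s \in [0,1]$ with $G(r)=G(s)=x$ (so, in particular, $G$ is defined at both, as per the standing convention). By Proposition \ref{pro:inverse:G}.2 applied to $r$ and to $s$, both values lie in the interval $[F_-(x),F(x)]$. But Lemma \ref{lema:medidapunto} gives $F(x)=F_-(x)+\mu(\{x\})=F_-(x)$, so that interval is a single point; hence $r=s$, and $G$ is injective.

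For the converse I would argue by contraposition: suppose some $x \in \X$ has $\mu(\{x\})>0$. Then Lemma \ref{lema:medidapunto} yields $F_-(x)<F(x)$, so the half-open interval $]F_-(x),F(x)]$ contains more than one real number (for instance $F(x)$ and $\tfrac{F_-(x)+F(x)}{2}$). By Proposition \ref{pro:inverse:F}.3, for every $r$ in this interval $G$ is defined at $r$ and $G(r)=x$. Thus $G$ is constant on a nondegenerate subinterval of $[0,1]$, so it fails to be injective.

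The only delicate point — really bookkeeping rather than a genuine obstacle — is to be sure $G$ is defined at all the points used: in the forward direction this is built into writing ``$G(r)=x$'', and in the converse it is exactly the content of Proposition \ref{pro:inverse:F}.3, which asserts definedness of $G$ on $]F_-(x),F(x)]$. No topological or convexity input beyond Lemma \ref{lema:medidapunto} and the already-established relations between $F$, $F_-$ and $G$ is required.
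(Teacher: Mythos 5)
Your proposal is correct and follows essentially the same route as the paper: the forward direction via Proposition \ref{pro:inverse:G}.2 and Lemma \ref{lema:medidapunto} to collapse $[F_-(x),F(x)]$ to a point, and the converse by contraposition via Proposition \ref{pro:inverse:F}.3 to show $G$ is constant on a nondegenerate interval. The only (immaterial) difference is that you use the half-open interval $]F_-(x),F(x)]$ where the paper uses the open one.
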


\begin{pro} \label{pro:finy}
	Let $F$ be a cdf, then $F$ is injective if, and only if $\mu(]a,b])>0$, for each $a<b$.
	
	\begin{proof}
		Let $a, b \in X$ be such that $a<b$. Note that, by Proposition \ref{pro:measureab}, $\mu(]a,b])=0$ is equivalent to $F(b)-F(a)=0$, that is, $F(b)=F(a)$ if, and only if $F$ is not injective.
	\end{proof}
\end{pro}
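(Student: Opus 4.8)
The plan is to reduce everything to Proposition~\ref{pro:measureab}, which already expresses $\mu(]a,b])$ in terms of the cdf as $\mu(]a,b])=F(b)-F(a)$ for $a<b$. Once this identity is in hand, the equivalence is almost immediate, the only subtlety being that one must exploit the monotonicity of $F$ (item~1 of Proposition~\ref{propcdf}) to upgrade a mere inequality of values into the strict inequality that injectivity demands.

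For the forward implication, I would assume $F$ is injective and fix arbitrary $a,b\in X$ with $a<b$. Injectivity gives $F(a)\neq F(b)$, while monotonicity gives $F(a)\leq F(b)$; combining these yields $F(b)-F(a)>0$, and Proposition~\ref{pro:measureab} then gives $\mu(]a,b])=F(b)-F(a)>0$, as desired.

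For the converse, I would argue by contradiction: suppose $\mu(]a,b])>0$ for all $a<b$ but $F$ is not injective, so there are $x\neq y$ with $F(x)=F(y)$; after relabelling we may assume $x<y$. Then Proposition~\ref{pro:measureab} forces $\mu(]x,y])=F(y)-F(x)=0$, contradicting the hypothesis applied to the pair $x<y$. Hence $F$ must be injective.

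I do not expect any real obstacle here; the proposition is essentially a restatement of Proposition~\ref{pro:measureab} together with the elementary fact that a monotone function is injective exactly when it is strictly monotone. The only point worth stating explicitly in the write-up is that, because $F$ is non-decreasing, "$F$ not injective" is equivalent to "there exist $a<b$ with $F(a)=F(b)$", which is what makes the two-sided equivalence line up cleanly with the condition $\mu(]a,b])>0$ for all $a<b$.
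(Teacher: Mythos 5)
Your proposal is correct and follows exactly the paper's route: both reduce the statement to Proposition \ref{pro:measureab} and the monotonicity of $F$, with your version merely spelling out the two implications that the paper compresses into one line. No gap and no genuinely different idea.
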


And we get, as immediate corollary, the next one

\begin{cor}
	Let $F$ be a cdf of a probability measure $\mu$, and let $A \subseteq [0,1]$ be the subset of points where $G$ is defined. The following statements are equivalent:
	\begin{enumerate}
		\item $F \circ G(r)=r$ for each $r \in A$, $F(X) \subseteq A$ and  $G \circ F(x)=x$ for each $x \in X$.
		\item $F$ is injective and $F(X)=A$.
		\item $G:A \to X$ is bijective.
		\item $\mu(]a,b])>0$, for each $a<b$ and $\mu(\{a\})=0$, for each $a \in X$.
	\end{enumerate}
\end{cor}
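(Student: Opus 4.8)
The plan is to prove the chain of equivalences cyclically, say $(1)\Rightarrow(2)\Rightarrow(3)\Rightarrow(4)\Rightarrow(1)$, leaning heavily on the two characterization propositions just established: Proposition \ref{pro:giny} ($G$ injective $\iff$ $\mu(\{x\})=0$ for all $x$) and Proposition \ref{pro:finy} ($F$ injective $\iff$ $\mu(]a,b])>0$ for all $a<b$). These immediately handle the "atomic" content of each statement, so the real work is bookkeeping about the domain $A$ of $G$ and the image $F(X)$, together with the two composition identities. Throughout I would use Corollary \ref{cor:inverse} ($G(r)\le x \iff r\le F(x)$), Proposition \ref{pro:inverse} ($G(F(x))\le x$ and $F(G(r))\ge r$), Proposition \ref{pro:inverse:F}.3 (if $F_-(x)<r\le F(x)$ then $G(r)=x$), and Corollary \ref{cor:fgrr} ($\mu(\{G(r)\})=0 \Rightarrow F(G(r))=r$).

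For $(4)\Rightarrow(1)$: assume $\mu(]a,b])>0$ for all $a<b$ and $\mu(\{a\})=0$ for all $a$. Then $F_-=F$ everywhere by Lemma \ref{lema:medidapunto}, and $F$ is injective by Proposition \ref{pro:finy}. Given $x\in X$, I want $G(F(x))=x$: we already have $G(F(x))\le x$ from Proposition \ref{pro:inverse}.1; if $G(F(x))<x$ then Proposition \ref{pro:inverse:G}.3 gives $F(x)\le F_-(x)=F(x)$, which is not yet a contradiction, so instead I would argue that $G(F(x))<x$ forces, via density and Proposition \ref{pro:finy}, some $y$ with $G(F(x))<y<x$ and $F(y)<F(x)$ while also $F(y)\ge F(x)$ by monotonicity applied through $F(G(F(x)))\ge F(x)$ — more cleanly, use Proposition \ref{pro:inverse:F}.4 contrapositive. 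This shows $F(X)\subseteq A$ and $G\circ F=\mathrm{id}$. For $F\circ G=\mathrm{id}$ on $A$: for $r\in A$, $\mu(\{G(r)\})=0$ by hypothesis, so Corollary \ref{cor:fgrr} gives $F(G(r))=r$.

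For $(1)\Rightarrow(2)$: from $G\circ F=\mathrm{id}_X$ we get $F$ injective (a left inverse exists). From $F(X)\subseteq A$ and $F\circ G=\mathrm{id}_A$ we get $A\subseteq F(X)$ (every $r\in A$ equals $F(G(r))$), hence $F(X)=A$. For $(2)\Rightarrow(3)$: $F$ injective gives, via Proposition \ref{pro:finy} and Proposition \ref{pro:giny} (noting $F$ injective implies $\mu(\{x\})=0$ for all $x$, since an atom at $x$ makes $F_-(x)<F(x)$ and $F$ constant on no interval but... actually an atom does not contradict injectivity directly; instead observe $\mu(]a,b])>0$ for all $a<b$ combined with $F(X)=A$ is what we use), that $G$ is injective by Proposition \ref{pro:giny}, and surjectivity of $G:A\to X$ follows because for any $x\in X$, taking $r=F(x)\in F(X)=A$ and using $F_-(x)<r\le F(x)$ (valid when $\mu(\{x\})=0$) Proposition \ref{pro:inverse:F}.3 yields $G(r)=x$. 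For $(3)\Rightarrow(4)$: $G$ bijective gives $G$ injective, so $\mu(\{a\})=0$ for all $a$ by Proposition \ref{pro:giny}; and if $\mu(]a,b])=0$ for some $a<b$ then $F(a)=F(b)$, so $F$ is not injective, and one checks this makes $G$ fail to be surjective onto $]a,b]$ (no $r$ has $G(r)$ strictly between $a$ and $b$), contradicting surjectivity — this uses Proposition \ref{pro:inverse:G}.2, which pins $G(r)$ down by $F_-(G(r))\le r\le F(G(r))$.

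The main obstacle I anticipate is the careful handling of the domain $A$ and the interaction between "$\mu$ atomless" and "$\mu$ charges every nondegenerate interval": neither condition alone gives a global bijection, and the delicate point is showing surjectivity of $G$ and the identity $G\circ F=\mathrm{id}$ simultaneously require \emph{both} hypotheses (atomlessness makes $F=F_-$ so the half-open interval $]F_-(x),F(x)]$ in Proposition \ref{pro:inverse:F}.3 degenerates, and then positivity on intervals is what prevents $F$ from being locally constant and hence what makes $x$ genuinely equal to $\inf\{y:F(y)\ge F(x)\}$). I would therefore be most careful in the steps $(4)\Rightarrow(1)$ and $(3)\Rightarrow(4)$, making sure each use of Proposition \ref{pro:inverse:F} has its hypothesis $F_-(x)<r$ genuinely satisfied, which under atomlessness means $r>F_-(x)=F(x)$ is impossible, so one must instead exploit that $F$ strictly increases to the right of $x$ to find witnesses $y<x$ with $F(y)<r$.
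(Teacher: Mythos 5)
Your cyclic architecture $(1)\Rightarrow(2)\Rightarrow(3)\Rightarrow(4)\Rightarrow(1)$ differs from the paper, which proves $(1)\Leftrightarrow(k)$ for $k=2,3,4$ separately after first isolating a small Claim: \emph{if $F$ is injective then $F(X)\subseteq A$ and $G(F(x))=x$ for all $x$}, proved by noting that injectivity plus monotonicity makes $F$ strictly increasing, so every $y<x$ has $F(y)<F(x)$ and therefore $x=\min\{y:F(y)\geq F(x)\}$. That Claim is exactly the missing piece in the two places where your argument does not close. In $(4)\Rightarrow(1)$ your in-line attempts to show $G(F(x))=x$ all fail for the reason you yourself notice: Proposition \ref{pro:inverse:G}.3 only yields $F(x)\leq F_-(x)$, which is no contradiction when $\mu$ is atomless, and Proposition \ref{pro:inverse:F}.3/.4 are vacuous there because $F_-=F$ collapses the interval $]F_-(x),F(x)]$. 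You also need to establish that $F(x)\in A$ at all before writing $G(F(x))$. The same defect reappears in your $(2)\Rightarrow(3)$ surjectivity step, where you invoke ``$F_-(x)<r\leq F(x)$ (valid when $\mu(\{x\})=0$)''--- that hypothesis is precisely \emph{invalid} when $\mu(\{x\})=0$. You do sketch the correct repair in your closing paragraph (strict increase of $F$ forces $x=\inf\{y:F(y)\geq F(x)\}$), but it never gets deployed where it is needed; as written, both steps would fail.

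The second concrete gap is in $(2)\Rightarrow(3)$: you correctly retract the claim that $F$ injective alone gives $\mu(\{x\})=0$, but you leave the replacement argument as a vague aside. The actual argument uses $F(X)=A$: an atom at $x$ gives $F_-(x)<F(x)$, and by Proposition \ref{pro:inverse:F}.3 every $r\in\mbox{}]F_-(x),F(x)[$ lies in $A$; yet no such $r$ can lie in $F(X)$, since $y<x$ forces $F(y)\leq F_-(x)$ and $y\geq x$ forces $F(y)\geq F(x)$. This contradiction with $F(X)=A$ is what kills the atoms and lets you apply Proposition \ref{pro:giny}. (The paper avoids this entirely by routing $(2)\Rightarrow(1)$ through the Claim and then getting $(4)$ from $(1)$.) Your steps $(1)\Rightarrow(2)$ and $(3)\Rightarrow(4)$ are fine, and the latter is a nice self-contained argument via Proposition \ref{pro:inverse:G}.2 and Corollary \ref{cor:inverse}. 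With the Claim stated and proved once up front, your cycle would go through.
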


\begin{proof}
First, we prove the following

\textbf{Claim}. \textit{If $F$ is injective, then $F(X) \subseteq A$ and $G(F(x))=x$ for each $x \in X$.}

Suppose that there exists $x \in X$ such that $G$ is not defined in $F(x)$, that is, there does not exists the infimum of $\{y \in X:F(y) \geq F(x)\}$. It follows that $x$ is not the infimum of the latter set, so there exists $y<x$ with $F(y) \geq F(x)$. By monotonicity of $F$ it follows that $F(y)=F(x)$, and since $F$ is injective $y=x$, a contradiction. We conclude that $F(X) \subseteq A$.

Finally, let $x \in X$, then $F(x) \in A$ and $G(F(x))=\inf \{y \in X:F(y) \geq F(x)\}$. On the other hand, if $y<x$ then $F(y) \leq F(x)$, and since $F$ is injective $F(y)<F(x)$. Therefore $G(F(x))=x$.

$(1) \implies (2)$. Since $F(X) \subseteq A$ and $G(F(x))=x$ for each $x \in X$, it follows that $F$ is injective. Now, we prove that $A \subseteq F(X)$. Indeed, let $r \in A$, then $F(G(r))=r$, so $r \in F(X)$.

$(1) \implies (3)$. Since $G(F(x))=x$ for each $x \in X$, it follows that $G$ is surjective. Since $F(G(r))=r$ for each $r \in A$, it follows that $G$ is injective.

$(1) \implies (4)$. Since $(1)$ implies $(2)$ and $(3)$, we have that $F$ and $G$ are both injective, so $(4)$ follows from Propositions \ref{pro:giny} and \ref{pro:finy}.

$(2) \implies (1)$. Let $r \in A$. Since $F(X)=A$ and $F$ is injective, there is only one $x \in X$ such that $F(x)=r$. It follows by definition of $G$ that $G(r)=x$ and hence $F(G(r))=F(x)=r$. By the Claim we have the rest of item $(1)$.

$(3) \implies (1)$. Let $r \in A$, then $F(G(r)) \geq r$ by Proposition \ref{pro:inverse}. Suppose that $F(G(r))>r$. It easily follows that $]r,F(G(r))[ \subseteq A$ and $G(]r,F(G(r))[)=G(r)$, but this is a contradiction, since $G$ is injective. We conclude that $F(G(r))=r$.

Now, let $x \in X$. Since $G$ is bijective, there exists $r \in A$ such that $x=G(r)$. It follows that $F(x)=F(G(r))=r$ and hence $F(x) \in A$. Therefore $F(X) \subseteq A$.

Finally, let $x \in X$, then $F(x) \in A$ and $G(F(x))=\inf \{y \in X:F(y) \geq F(x) \}$. Suppose that there exists $y<x$ such that $F(y) \geq F(x)$. By monotonicity of $F$ it follows that $F(y)=F(x)$. Since $G$ is biyective, there exists $r,s \in [0,1]$ such that $G(r)=y$ and $G(s)=x$. Note that $r<s$ by monotonicity of $G$. It follows that $r=F(G(r))=F(y)=F(x)=F(G(s))=s$, a contradiction. We conclude that $x=\inf \{y \in X:F(y) \geq F(x) \}=G(F(x))$.

$(4) \implies (1)$. By Corollary \ref{cor:fgrr} it follows that $F(G(r))=r$ for each $r \in A$. By Proposition \ref{pro:finy}, $F$ is injective and by the Claim it follows that $F(X) \subseteq A$ and $G(F(x))=x$ for each $x \in X$.

\end{proof}

\begin{pro}\label{pro:G^{-1}}
	Let $a, b \in X$ be such that $a<b$, then $G^{-1}(]a,b[)=]F(a), F_-(b)| \cap A$, where $|$ means $]$ or $[$ and $A$ is the subset of $[0,1]$ where $G$ is defined.
	\begin{proof}
		First of all, we show that $G^{-1}(]a,b[) \subseteq ]F(a), F_-(b)] \cap A$. For that purpose, let $r \in G^{-1}(]a,b[)$ and suppose that $r \notin ]F(a), F_-(b)]$, then it can happen:
		\begin{itemize}
			\item $r \leq F(a)$ what implies, by Corollary \ref{cor:inverse}, that $G(r) \leq a$ what is a contradiction with the fact that $r \in G^{-1}(]a,b[)$.
			\item $r>F_-(b)$ what gives us, by Proposition \ref{pro:inverse:F}.2, that $b \leq G(r)$ what implies that $r \notin G^{-1}(]a,b[)$ since $b \notin ]a,b[$, a contradiction.
			\end{itemize}
			
			Now we prove that $]F(a), F_-(b)[ \cap A \subseteq G^{-1}(]a,b[)$. For that purpose, let $r \in ]F(a), F_-(b)[$ where $G$ is defined, and suppose that $r \notin G^{-1}(]a,b[)$, then it can happen:
				\begin{itemize}
					\item $G(r) \leq a$ what implies, by Corollary \ref{cor:inverse}, that $r \leq F(a)$, a contradiction with the fact that $r>F(a)$.
					\item $G(r) \geq b$ what gives us, by Proposition \ref{pro:inverse:F}.4, that $r \geq F_-(b)$, a contradiction with the fact that $r<F_-(b)$.
				\end{itemize}
				
	\end{proof}
\end{pro}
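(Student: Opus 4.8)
The plan is to prove the two inclusions $G^{-1}(]a,b[)\subseteq\,]F(a),F_-(b)]\cap A$ and $]F(a),F_-(b)[\cap A\subseteq G^{-1}(]a,b[)$; chaining them gives $]F(a),F_-(b)[\cap A\subseteq G^{-1}(]a,b[)\subseteq\,]F(a),F_-(b)]\cap A$, which is exactly the asserted equality in the sense encoded by the ambiguous symbol $|$, namely that the right endpoint $F_-(b)$ may or may not lie in the preimage. The whole argument rests on the order dictionary between $F$, $F_-$ and $G$ already established, principally Corollary \ref{cor:inverse} ($G(r)\le x\iff r\le F(x)$, equivalently Proposition \ref{pro:inverse:F}.1) together with items 2 and 4 of Proposition \ref{pro:inverse:F}; all evaluations of $G$ are tacitly at points of $A$.

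For the first inclusion I would fix $r\in G^{-1}(]a,b[)$, so $r\in A$ and $a<G(r)<b$, and argue by contradiction that $r\in\,]F(a),F_-(b)]$. If $r\le F(a)$, then Corollary \ref{cor:inverse} forces $G(r)\le a$, contradicting $a<G(r)$. If instead $r>F_-(b)$, then Proposition \ref{pro:inverse:F}.2 applied with $x=b$ forces $b\le G(r)$, contradicting $G(r)<b$ (and $b\notin\,]a,b[$). Hence $F(a)<r\le F_-(b)$.

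For the second inclusion I would fix $r\in\,]F(a),F_-(b)[\cap A$ and again argue by contradiction that $G(r)\in\,]a,b[$. If $G(r)\le a$, then Corollary \ref{cor:inverse} gives $r\le F(a)$, contradicting $r>F(a)$. If $G(r)\ge b$, then Proposition \ref{pro:inverse:F}.4 (in contrapositive form, with $x=b$) gives $r\ge F_-(b)$, contradicting $r<F_-(b)$. Therefore $a<G(r)<b$, i.e. $r\in G^{-1}(]a,b[)$.

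I expect no genuine obstacle: the proof is a short bookkeeping exercise with the equivalences already in hand, and mirrors the style of the preceding propositions. The only point worth spelling out is why the endpoints behave asymmetrically. The left endpoint is strictly excluded because $r=F(a)$ always yields $G(r)\le a$; the right endpoint cannot be decided in general, since depending on $\mu$ one may have $G(F_-(b))=b\notin\,]a,b[$ or $G(F_-(b))\in\,]a,b[$, and only the latter places $F_-(b)$ inside $G^{-1}(]a,b[)$. This is precisely why the statement writes the right endpoint with $|$ instead of committing to $]$ or $[$.
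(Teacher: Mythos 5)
Your proposal is correct and follows essentially the same route as the paper: both inclusions are proved by contradiction using exactly Corollary \ref{cor:inverse} for the left endpoint and items 2 and 4 of Proposition \ref{pro:inverse:F} for the right endpoint. Your closing remark explaining the asymmetry of the endpoints (why the left is strictly excluded while the right is genuinely ambiguous) is a useful clarification that the paper leaves implicit.
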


According to the previous proposition, it is clear the next

\begin{cor}\label{cor:measurable}
	Suppose that $G$ is defined on $[0,1]$. Let $a, b \in X$ be such that $a<b$. Then $G^{-1}(]a,b[) \in \sigma([0,1])$, where $\sigma([0,1])$ denotes de Borel $\sigma$-algebra with respect to the euclidean topology.
	
	\begin{proof}
		Since $G^{-1}(]a,b[)=]F(a), F_-(b)|$, it is an open set or the intersection of an open and a closed set. Consequently, $G^{-1}(]a,b[) \in \sigma([0,1])$.
	\end{proof}
\end{cor}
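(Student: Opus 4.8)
The plan is to read the answer directly off Proposition \ref{pro:G^{-1}}, which already computes the preimage of a basic open interval under $G$. Under the standing hypothesis that $G$ is defined on all of $[0,1]$, the subset $A \subseteq [0,1]$ on which $G$ is defined equals the whole of $[0,1]$, so the factor $\cap A$ in that proposition is redundant and we obtain $G^{-1}(]a,b[) = ]F(a), F_-(b)|$, where $|$ denotes either $[$ or $]$ at the right endpoint.

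It then remains to observe that each possible form of this set belongs to $\sigma([0,1])$. If $G^{-1}(]a,b[) = ]F(a), F_-(b)[$, then (with the convention that it is empty when $F_-(b) \leq F(a)$) it is an open subinterval of $[0,1]$, hence open in the Euclidean topology and thus Borel. If instead $G^{-1}(]a,b[) = ]F(a), F_-(b)]$, then, since $F(a), F_-(b) \in [0,1]$, it is the intersection of the open set $(F(a), \infty)$ with the closed set $(-\infty, F_-(b)]$ (relative to $[0,1]$), hence the intersection of two Borel sets and so again Borel. In either case $G^{-1}(]a,b[) \in \sigma([0,1])$, which is what we want.

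Since Proposition \ref{pro:G^{-1}} carries all of the content, there is no genuine obstacle here; the only point requiring a moment's care is that we need not decide which of the two endpoint conventions actually occurs, because both alternatives produce Borel sets and the conclusion is the same regardless. (One could note in passing that this is the key step toward the measurability of $G$, since the intervals $]a,b[$ form a basis for $\tau$.)
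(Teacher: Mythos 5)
Your proof is correct and follows essentially the same route as the paper: invoke Proposition \ref{pro:G^{-1}} (with $A=[0,1]$ making the intersection redundant) and observe that both possible forms of the interval $]F(a),F_-(b)|$ are Borel, being either open or the intersection of an open set with a closed set. Your write-up merely spells out these two cases more explicitly than the paper does.
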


\begin{pro}\label{pro:opens}
	Each open set in $\tau$ is the countable union of open intervals.
	\begin{proof}
		Let $G \subseteq X$ be an open set in $\tau$. If $G=\emptyset$, the result is clear since it can be written as $G=]a,a[$. Now suppose that $G$ is nonempty, then $G=\bigcup_{i \in I} G_i$, where $G_i$ is a convex component of $G$ for each $i \in I$ (see Proposition \ref{pro:convexcomp}). Now we prove that $G_i$ is open for each $i \in I$. Let $i \in I$ and $x \in G_i$. Since $G$ is an open set and $\{]a,b[: a, b \in X, a<b\}$ is an open basis of $X$ with respect to $\tau$ there exist $a, b \in X$ such that $x \in ]a,b[ \subseteq G$. Note that $G_i \cup ]a,b[$ is a convex set contained in $G$, what implies that $G_i \cup ]a,b[=G_i$ since $G_i$ is a convex component of $G$. Consequently, $]a,b[ \subseteq G_i$, what means that $G_i$ is an open set. Now, let $D$ be a countable dense subset of $X$, then we can choose $d_i \in D \cap G_i$ for each $i \in I$ what gives us the countability of $I$, since the familily $\{G_i: i \in I\}$ is pairwise disjoint.

Since $G_i$ is convex and open, by Corollary \ref{cor:intervals}, $G_i$ can be written as a countable union of open intervals. Thus, $G$ is the countable union of open intervals.
		\end{proof}
\end{pro}

Next result will be essential to show that $G$ is measurable with respect to the Borel $\sigma$-algebra.

\begin{teo}(\cite[Th. 1.7.2]{Bauer}) \label{Bauer}
	Let $(\Omega, \mathfrak{A})$ and $(\Omega', \mathfrak{A}')$ be measurable spaces; further let $\mathfrak{B}'$ be a generator of $\mathfrak{A}'$. A mapping $T: \Omega \rightarrow \Omega'$ is measurable if, and only if $T^{-1}(A') \in \mathfrak{A}$, for each $A' \in \mathfrak{B}'$.
\end{teo}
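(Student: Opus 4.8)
The plan is to prove the stated equivalence by the standard \emph{good sets principle}, treating the two implications separately; neither direction involves anything beyond the definition of measurability and the elementary behaviour of preimages.

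For the forward implication, assume $T$ is measurable, so that $T^{-1}(A') \in \mathfrak{A}$ for \emph{every} $A' \in \mathfrak{A}'$. Since a generator $\mathfrak{B}'$ of $\mathfrak{A}'$ is in particular a subcollection of $\mathfrak{A}'$, the required conclusion $T^{-1}(A') \in \mathfrak{A}$ for each $A' \in \mathfrak{B}'$ is immediate.

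For the reverse implication, assume $T^{-1}(A') \in \mathfrak{A}$ for each $A' \in \mathfrak{B}'$, and put
$$\mathcal{D} = \{\, A' \subseteq \Omega' : T^{-1}(A') \in \mathfrak{A} \,\}.$$
First I would verify that $\mathcal{D}$ is a $\sigma$-algebra on $\Omega'$. This rests on the elementary identities that preimages commute with the Boolean and countable set operations, namely $T^{-1}(\Omega') = \Omega$, $T^{-1}(\Omega' \setminus A') = \Omega \setminus T^{-1}(A')$, and $T^{-1}\big(\bigcup_{n \in \N} A'_n\big) = \bigcup_{n \in \N} T^{-1}(A'_n)$, combined with the fact that $\mathfrak{A}$ itself contains $\Omega$ and is closed under complementation and countable unions. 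By hypothesis $\mathfrak{B}' \subseteq \mathcal{D}$; since $\mathcal{D}$ is a $\sigma$-algebra containing the generator $\mathfrak{B}'$, it must contain the smallest $\sigma$-algebra containing $\mathfrak{B}'$, that is, $\mathfrak{A}' = \sigma(\mathfrak{B}') \subseteq \mathcal{D}$. Unwinding the definition of $\mathcal{D}$, this says exactly that $T^{-1}(A') \in \mathfrak{A}$ for every $A' \in \mathfrak{A}'$, i.e.\ $T$ is measurable.

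The only step requiring any attention is the verification that $\mathcal{D}$ is closed under complements and countable unions, and this is a routine bookkeeping exercise with the three preimage identities above; there is no genuine obstacle. Since this is the classical characterization of measurable maps, it is quoted here from \cite[Th. 1.7.2]{Bauer} rather than reproduced in full.
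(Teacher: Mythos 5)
Your proof is correct and complete: the forward direction is immediate from $\mathfrak{B}' \subseteq \mathfrak{A}'$, and the reverse direction is the standard good-sets argument, using that $\mathcal{D} = \{A' \subseteq \Omega' : T^{-1}(A') \in \mathfrak{A}\}$ is a $\sigma$-algebra containing the generator and hence contains $\sigma(\mathfrak{B}') = \mathfrak{A}'$. The paper itself gives no proof of this statement --- it is imported verbatim as \cite[Th.~1.7.2]{Bauer} --- so there is nothing to compare against; your argument is exactly the classical one that the cited source uses.
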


Since $G^{-1}(]a,b[) \in \sigma([0,1])$, for each $a, b \in X$ with $a<b$ and by taking into account Proposition \ref{pro:opens}, we conclude that

\begin{cor}\label{cor:Gmeasurable}
	Suppose that $G$ is defined on $[0,1]$. Then $G$ is measurable with respect to the Borel $\sigma$-algebras.
	
	\begin{proof}
		
		To show that $G$ is measurable we just have to use Corollary \ref{cor:measurable}, Theorem \ref{Bauer} and the fact that each open set in $\tau$ can be written as countable union of open intervals (see Proposition \ref{pro:opens}).
	\end{proof}
\end{cor}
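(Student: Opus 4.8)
The plan is to invoke the standard measurability criterion, Theorem~\ref{Bauer}, with $\Omega = [0,1]$ equipped with its Borel $\sigma$-algebra $\sigma([0,1])$ (generated by the euclidean topology), $\Omega' = X$ equipped with its Borel $\sigma$-algebra $\mathcal{B} = \sigma(\tau)$, and $T = G$. To apply the theorem we need a generator $\mathfrak{B}'$ of $\mathcal{B}$ for which we can verify that $G^{-1}(B') \in \sigma([0,1])$ for every $B' \in \mathfrak{B}'$.

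The natural choice of generator is the family of open sets of $\tau$, which generates $\mathcal{B}$ by definition of the Borel $\sigma$-algebra. However, checking the preimage condition directly on arbitrary open sets is awkward, so first I would reduce to open intervals. By Proposition~\ref{pro:opens}, every open set in $\tau$ is a countable union of open intervals $]a,b[$ with $a,b \in X \cup \{-\infty,\infty\}$. Hence the collection $\mathfrak{B}' = \{\,]a,b[\, : a < b\,\}$ (allowing $\pm\infty$ as endpoints) also generates $\mathcal{B}$: every open set is a countable union of members of $\mathfrak{B}'$, and $\mathcal{B}$ is generated by the open sets, so $\sigma(\mathfrak{B}') = \sigma(\tau) = \mathcal{B}$.

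Next I would verify the preimage condition on this generator. For $a, b \in X$ with $a<b$, Corollary~\ref{cor:measurable} (which rests on Proposition~\ref{pro:G^{-1}}, applicable because $G$ is assumed defined on all of $[0,1]$) gives $G^{-1}(]a,b[) = \,]F(a), F_-(b)|$, an interval that is either open or the intersection of an open and a closed set, hence a member of $\sigma([0,1])$. The only remaining cases are intervals with an infinite endpoint: $G^{-1}(]{-\infty},b[) = G^{-1}(<b)$, $G^{-1}(]a,\infty[) = G^{-1}(>a)$, and $G^{-1}(]{-\infty},\infty[) = G^{-1}(X) = [0,1]$. These can be handled the same way using Corollary~\ref{cor:inverse} and Proposition~\ref{pro:inverse:F}: for instance $G^{-1}(>a) = \{r : G(r) > a\} = \{r : F(a) < r\} = \,]F(a), 1]$ by Proposition~\ref{pro:inverse:F}.1, which is Borel in $[0,1]$; the $(<b)$ case is analogous (it equals a set of the form $[0, F_-(b)|$), and $[0,1]$ is trivially Borel.

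With the preimage condition checked on a generator of $\mathcal{B}$, Theorem~\ref{Bauer} immediately yields that $G \colon [0,1] \to X$ is measurable with respect to the two Borel $\sigma$-algebras. I do not anticipate a real obstacle here: the substantive work (the computation of $G^{-1}(]a,b[)$ and the structure theorem for open sets) has already been done in Propositions~\ref{pro:G^{-1}} and~\ref{pro:opens} and Corollary~\ref{cor:measurable}; the only mild subtlety is remembering to treat the intervals with infinite endpoints, which are not literally covered by Corollary~\ref{cor:measurable} as stated but are handled identically.
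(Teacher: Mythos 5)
Your proof is correct and follows essentially the same route as the paper: apply Theorem~\ref{Bauer} to the generator consisting of open intervals, using Corollary~\ref{cor:measurable} for the preimages and Proposition~\ref{pro:opens} to see that these intervals generate the Borel $\sigma$-algebra. Your explicit treatment of the intervals with infinite endpoints (via Proposition~\ref{pro:inverse:F}.1 and Corollary~\ref{cor:inverse}) is a small but welcome addition that the paper's one-line proof glosses over.
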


\section{Generating samples}

\begin{lema}\label{le:alg}
	The family $\mathfrak{A}=\{\bigcup_{i=1}^n |a_i, b_i|: a_1 \leq b_1<a_2 \leq b_2<\ldots<a_n \leq b_n, a_1 \in X \cup \{-\infty\}, b_n \in X \cup \{\infty\}\}$ is an algebra and the $\sigma$-algebra generated by it is the Borel $\sigma$-algebra.
\end{lema}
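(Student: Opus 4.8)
The plan is to prove the two assertions in turn: first that $\mathfrak{A}$ is an algebra, and then that $\sigma(\mathfrak{A})=\B$. The key preliminary observation for the first part is that $\mathfrak{A}$ is exactly the family of all finite unions of intervals of $\X$ (with endpoints allowed in $\X\cup\{-\infty,\infty\}$). Indeed, given an arbitrary finite union of intervals, Proposition \ref{pro:convexcomp} lets us rewrite it as the disjoint union of its convex components, which are finitely many. Each such component $C$ is convex, and since $C$ is contained in the given finite union, $\inf C$ and $\sup C$ agree with the infimum, respectively supremum, of one of the finitely many intervals forming the union, so they lie in $\X\cup\{-\infty,\infty\}$ and are attained or not accordingly; hence $C$ is itself an interval. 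Finally, consecutive components $C_i$, $C_{i+1}$ satisfy $\sup C_i\le\inf C_{i+1}$, with equality only when the common value belongs to neither component (forcing a $\,]a_i,b_i[\,$/$\,]a_{i+1},b_{i+1}[\,$ pattern there). This normalization is the slightly delicate point, and it is also where one sees that the blocks must be allowed to abut at an excluded endpoint.

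Once $\mathfrak{A}$ is identified with the finite unions of intervals, the algebra axioms follow from three elementary facts about the class of intervals: it contains $\emptyset=\,]a,a[\,$ and $\X=\,]-\infty,\infty[\,$; the intersection of two intervals is convex with endpoints among the original four, hence is again an interval or empty; and the complement $\X\setminus|a,b|$ of an interval is the disjoint union of at most two intervals, one of the form $\,]-\infty,a|\,$ and one of the form $\,|b,\infty[\,$, with the inclusion of $a$ and $b$ flipped. From these, closure of $\mathfrak{A}$ under finite unions is immediate, closure under finite intersections follows by distributing the intersection over the unions, and closure under complement follows from $\X\setminus\bigcup_i I_i=\bigcap_i(\X\setminus I_i)$, a finite intersection of finite unions of intervals. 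Since $\X\in\mathfrak{A}$, this makes $\mathfrak{A}$ an algebra.

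For the identification of the generated $\sigma$-algebra I would prove both inclusions. For $\sigma(\mathfrak{A})\subseteq\B$ it suffices to check that every interval is Borel: open intervals are open (Remark \ref{ob:base}); the sets $(\ge a)=\X\setminus(<a)$ and $(\le b)=\X\setminus(>b)$ are closed, so $[a,b]=(\ge a)\cap(\le b)$ is closed and the two half-open intervals are intersections of an open set with a closed set; a finite union of Borel sets is Borel, so $\mathfrak{A}\subseteq\B$ and hence $\sigma(\mathfrak{A})\subseteq\B$. For $\B\subseteq\sigma(\mathfrak{A})$, recall $\B=\sigma(\tau)$, so it is enough that $\tau\subseteq\sigma(\mathfrak{A})$; by Proposition \ref{pro:opens} every open set is a countable union of open intervals, and open intervals belong to $\mathfrak{A}$, so every open set lies in $\sigma(\mathfrak{A})$. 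Combining the two inclusions gives $\sigma(\mathfrak{A})=\B$.

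The step I expect to be the main obstacle is the normalization in the first paragraph: verifying carefully that the convex components of a finite union of intervals are themselves intervals with endpoints in $\X\cup\{-\infty,\infty\}$ and can be listed in increasing, pairwise disjoint order, together with the bookkeeping of endpoint inclusion when taking complements. Everything after that is routine once the intervals are recognized as a generating semialgebra and Proposition \ref{pro:opens} is available.
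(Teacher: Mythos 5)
Your argument is correct in substance, and for the identification $\sigma(\mathfrak{A})=\B$ it follows exactly the paper's route: intervals are Borel (open, closed, or the intersection of an open and a closed set), giving $\sigma(\mathfrak{A})\subseteq\B$, and Proposition \ref{pro:opens} gives the reverse inclusion. Where you differ is on the algebra part, and your version is the more careful one. The paper checks closure under union only for a pair of single intervals, with the one-line remark that the union is ``two disjoint intervals or a new interval''; you instead identify $\mathfrak{A}$ with the class of all finite unions of intervals via the convex-component decomposition of Proposition \ref{pro:convexcomp}, after which closure under union is trivial and closure under intersection and complement reduces to elementary facts about single intervals. This buys an argument that actually covers general elements of $\mathfrak{A}$ rather than single intervals, and it surfaces the one genuine subtlety, which you correctly flag: two nonempty convex components can abut at a point excluded from both, as in $]a,b[\,\cup\,]b,c[$ when $b$ has no immediate predecessor or successor. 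Be aware, though, that this case is not merely delicate --- it is incompatible with the displayed definition of $\mathfrak{A}$, which demands $b_i<a_{i+1}$ strictly: in a densely ordered $X$ the set $]a,b[\,\cup\,]b,c[$ admits no representation with separated endpoints, so $\mathfrak{A}$ as literally written is not closed under unions. To make your normalization (and the lemma) literally true one must relax the condition to allow $b_i=a_{i+1}$ when that common endpoint belongs to neither block, i.e.\ take $\mathfrak{A}$ to be all finite unions of intervals --- which is exactly what your argument proves to be an algebra generating $\B$. The paper's own proof passes over this point silently, so your proposal is, if anything, a repair of the original rather than a gap of its own.
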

\begin{proof}
	 Now we prove that $\mathfrak{A}$ is an algebra.
	\begin{enumerate}
		\item $A \cup B \in \mathfrak{A}$, for each $A, B \in \mathfrak{A}$. Indeed, this is true due to the fact that the union of two intervals consists on two disjoint intervals in case $A \cap B =\emptyset$ or it is a new interval otherwise. 
		\item $A \cap B \in \mathfrak{A}$, for each $A, B \in \mathfrak{A}$. Indeed, this is true due to the fact that the intersection of two intervals is $\emptyset$ or a new interval. Hence, $A \cap B$ is finite union of disjoint intervals, what means that $A \cap B \in \mathfrak{A}$.
		\item $\X \backslash A \in \mathfrak{A}$, for each $A \in\mathfrak{A}$. Indeed, this is true due to the fact that $\X \backslash A=]-\infty,a_{1}| \cup |b_1, a_2| \cup \ldots \cup |b_{n-1}, a_n| \cup |b_{n},\infty[  \in \mathfrak{A}$.
	
	\end{enumerate}
	
	Note that each element in $\mathfrak{A}$ belongs to $(X,\tau)$. Indeed, this is true due to the fact that, given $A \in \mathfrak{A}$, it consists of the finite union of open intervals, semi-open intervals (which are the intersection of an open and a closed set) or closed intervals (which are closed). Hence, $\mathfrak{S}$ is contained in the Borel $\sigma$-algebra of $(X,\tau)$, where $\mathfrak{S}=\sigma(\mathfrak{A})$.
	Finally, if $G$ is an open set in $(\X,\tau)$, by Proposition \ref{pro:opens}, it can be written as the countable union of open intervals. Thus $G$ can be written as the countable union of elements in $\mathfrak{A}$, what means that $G \in \mathfrak{S}$.  We conclude that $\mathfrak{S}$ is the Borel $\sigma$-algebra of $(X,\tau)$.
	
\end{proof}

Now, we want to prove the uniqueness of the measure with respect to its cdf.

First, we recall from \cite{Halmos} a theorem about the uniqueness of a measure. As a consequence of the next theorem we have that two measures that coincide in an algebra also coincide in its generated $\sigma$-algebra.

\begin{teo}(\cite[Chapter III, Th. A]{Halmos}) \label{teo:algebra}
	If $\mu$ is a $\sigma$-finite measure on a ring $R$, then there is a unique measure $\overline{\mu}$ on the $\sigma$-ring $S(R)$ such that, for $E$ in $R$, $\overline{\mu}(E)=\mu(E)$; the measure $\overline{\mu}$ is $\sigma$-finite.
\end{teo}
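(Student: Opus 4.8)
The plan is to follow the classical Carathéodory extension procedure, carried out in the ring/$\sigma$-ring setting of \cite{Halmos}. First I would pass to the \emph{hereditary $\sigma$-ring} $\mathcal{H}$ consisting of all subsets of $X$ that can be covered by countably many members of $R$, and on it define
\[
\mu^*(A)=\inf\Bigl\{\sum_{n=1}^\infty \mu(E_n): A\subseteq \bigcup_{n=1}^\infty E_n,\ E_n\in R\Bigr\}.
\]
The first block of (routine) verifications is that $\mu^*$ is an outer measure on $\mathcal{H}$: $\mu^*(\emptyset)=0$, monotonicity, and countable subadditivity, the last via the usual $\varepsilon/2^n$ argument applied to near-optimal covers.

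Next I would check that $\mu^*$ genuinely extends $\mu$, i.e. $\mu^*(E)=\mu(E)$ for $E\in R$: the inequality $\le$ comes from covering $E$ by itself, and the inequality $\ge$ uses that a measure on a ring is countably subadditive there, applied to any covering of $E$ by members of $R$. Then comes the heart of the existence argument: showing that each $E\in R$ is $\mu^*$-measurable in the Carathéodory sense, that is, $\mu^*(A)=\mu^*(A\cap E)+\mu^*(A\setminus E)$ for every $A\in\mathcal{H}$. By the standard Carathéodory lemma the family $\mathfrak{M}$ of $\mu^*$-measurable sets is a $\sigma$-ring on which $\mu^*$ restricts to a measure; since $R\subseteq\mathfrak{M}$ we get $S(R)\subseteq\mathfrak{M}$, and $\overline{\mu}:=\mu^*|_{S(R)}$ is the required extension. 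Its $\sigma$-finiteness is inherited: every member of $S(R)$ sits inside a countable union of sets of $R$ of finite $\mu$-measure.

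For uniqueness, let $\nu$ be a second measure on $S(R)$ with $\nu=\mu$ on $R$. Fix $E\in R$ with $\mu(E)<\infty$ and consider $\mathcal{C}_E=\{A\in S(R):\overline{\mu}(A\cap E)=\nu(A\cap E)\}$. This class contains $R$ and is closed under monotone limits (continuity from below for increasing sequences, and continuity from above for decreasing ones, the latter licit because $\mu(E)<\infty$), so the monotone class theorem gives $\mathcal{C}_E=S(R)$. By $\sigma$-finiteness any $A\in S(R)$ lies in $\bigcup_k E_k$ with $E_k\in R$, $\mu(E_k)<\infty$; disjointifying to pairwise disjoint $E_k'\in R$ (possible since $R$ is a ring) and summing the identities $\overline{\mu}(A\cap E_k')=\nu(A\cap E_k')$ over $k$ yields $\overline{\mu}(A)=\nu(A)$, hence $\overline{\mu}=\nu$.

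The step I expect to be the main obstacle is the verification that members of $R$ are $\mu^*$-measurable: the $\le$ half is just subadditivity, but the $\ge$ half requires choosing a near-optimal countable cover of $A$ by sets of $R$, splitting each piece along $E$, and controlling the error — and it is precisely here that the countable additivity of $\mu$ on the ring is genuinely used. A secondary subtlety, specific to the Halmos formulation, is to keep every argument inside the ring/$\sigma$-ring framework and never invoke $X\in R$; in the uniqueness part this is what forces the disjointification-and-summation step instead of a one-line reduction to a finite (probability) setting.
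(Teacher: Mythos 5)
This theorem is quoted verbatim from Halmos (Chapter III, Theorem A) and the paper gives no proof of its own, so there is nothing internal to compare against. Your sketch is a correct outline of the standard Carath\'eodory extension argument in the ring/$\sigma$-ring setting --- outer measure on the hereditary $\sigma$-ring, measurability of members of $R$, restriction to $S(R)$, and uniqueness via the monotone class theorem together with disjointification under $\sigma$-finiteness --- which is precisely the route taken in the cited source.
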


\begin{pro}
	Let $F_\mu$ and $F_\delta$ be the cdf's of the measures $\mu$ and $\delta$ satisfying $F_\mu=F_\delta$, then $\mu=\delta$ on the Borel $\sigma$-algebra of $(X, \tau)$.
	\begin{proof}
		Let $a, b \in X$ be such that $a \leq b$, then a cdf let us determine the measure of the set $|a,b|$. Indeed, we distinguish four cases depending on whether $a$ and $b$ belongs to $|a, b|$ or not.
		\begin{enumerate}
			\item $\mu(]a,b])=F_\mu(b)-F_\mu(a)=F_\delta(b)-F_\delta(a)=\delta(]a,b])$.
			\item $\mu([a,b])=F_{\mu}(b)-F_{\mu-}(a)=F_{\mu}(b)-\sup F_\mu(<a)=F_{\delta}(b)-\sup F_\delta(<a)=F_{\delta}(b)-F_{\delta-}(a)=\delta([a,b])$, where we have taken into account that $F_-(x)=\sup F(<x)$, for each $x \in X$ (see Proposition \ref{pro:sup}).
			\item $\mu(]a,b[)=F_{\mu-}(b)-F_{\mu}(a)=\sup F_{\mu}(<b)-F_\mu(a)=\sup F_{\delta}(<b)-F_\delta(a)=F_{\delta-}(b)-F_{\delta}(a)=\delta(]a,b[)$.
			\item $\mu([a,b[)=F_{\mu-}(b)-F_{\mu-}(a)=\sup F_{\mu}(<b)-\sup F_\mu(<a)=\sup F_{\delta}(<b)-\sup F_\delta(<a)=F_{\delta-}(b)-F_{\delta-}(a)=\delta([a,b[)$.
		\end{enumerate}
		
		Since $\mu(|a,b|)=\delta(|a,b|)$, for each $a, b \in X$ with $a \leq b$, it follows that $\mu(A)=\delta(A)$, for each $A \in \mathfrak{A}$, due to the $\sigma$-additivity of $\mu$ and $\delta$ as measures. Since $\mu=\delta$ on $\mathfrak{A}$, we conclude that $\mu=\delta$ on $\sigma(\mathfrak{A})$, that is, they coincide on the Borel $\sigma$-algebra of $(X, \tau)$ by the previous results.
	\end{proof}
\end{pro}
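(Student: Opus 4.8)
The plan is to show that equality of the cumulative distribution functions forces $\mu$ and $\delta$ to agree on a generating algebra, and then to invoke the classical uniqueness theorem for measures. The key preliminary observation is that, since $F_\mu = F_\delta$ and — by Proposition~\ref{pro:sup} — $F_{\mu-}(x) = \sup F_\mu(<x)$ with the analogous identity for $\delta$, we automatically also get $F_{\mu-} = F_{\delta-}$; that is, the left-continuous version $F_-$ is intrinsically recovered from $F$. This is the point on which the whole argument rests.

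First I would use Proposition~\ref{pro:measureab} and Corollary~\ref{cor:measure} to express the $\mu$-measure of each of the four interval types $]a,b]$, $[a,b]$, $]a,b[$, $[a,b[$ in terms of $F_\mu$ and $F_{\mu-}$, and the $\delta$-measure of the same intervals in terms of $F_\delta$ and $F_{\delta-}$. Combining these with $F_\mu = F_\delta$ and $F_{\mu-} = F_{\delta-}$ yields $\mu(|a,b|) = \delta(|a,b|)$ for all $a \leq b$ in $X$ (allowing the conventions $a = -\infty$ and $b = \infty$). Since a finite disjoint union of such intervals has measure equal to the sum of the measures of its pieces, finite additivity of $\mu$ and $\delta$ extends the equality to every element of the algebra $\mathfrak{A}$ of Lemma~\ref{le:alg}; hence $\mu = \delta$ on $\mathfrak{A}$.

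Next I would apply Theorem~\ref{teo:algebra}. As probability measures, $\mu$ and $\delta$ are finite, hence $\sigma$-finite, on the ring $\mathfrak{A}$, so the theorem provides a unique extension from $\mathfrak{A}$ to the generated $\sigma$-ring $\sigma(\mathfrak{A})$; in particular two measures that agree on $\mathfrak{A}$ must agree on $\sigma(\mathfrak{A})$. By Lemma~\ref{le:alg}, $\sigma(\mathfrak{A})$ is precisely the Borel $\sigma$-algebra of $(X,\tau)$, and the conclusion $\mu = \delta$ on the Borel $\sigma$-algebra follows.

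There is not much of a genuine obstacle here once the earlier machinery is in place: the single delicate step is the reduction $F_{\mu-} = F_{\delta-}$, since without Proposition~\ref{pro:sup} one could not close the three cases involving $[a,b]$, $]a,b[$ and $[a,b[$, where $F_-$ appears explicitly. A secondary matter is purely bookkeeping — handling the degenerate case $a = b$ and the $\pm\infty$ conventions when writing intervals and their complements as elements of $\mathfrak{A}$ — but this presents no real difficulty.
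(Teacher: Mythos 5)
Your proposal is correct and follows essentially the same route as the paper: both reduce $F_{\mu-}=F_{\delta-}$ to $F_\mu=F_\delta$ via Proposition \ref{pro:sup}, use Proposition \ref{pro:measureab} and Corollary \ref{cor:measure} to equate the measures of all four interval types, extend by finite additivity to the algebra $\mathfrak{A}$ of Lemma \ref{le:alg}, and conclude with the uniqueness Theorem \ref{teo:algebra}. No gaps beyond those already present in the paper's own argument.
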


\begin{teo}(\cite[Th A. 81.]{Sch}) \label{teo:fun}
	A measurable function $f$ from one measure space $(S_1, \mathcal{A}_1, \mu_1)$ to a measurable space $(S_2, \mathcal{A}_2)$, $f:S_1 \rightarrow S_2$, induces a measure on the range $S_2$. For each, $A \in \mathcal{A}_2$, define $\mu_2(A)=\mu_1(f^{-1}(A))$. Integrals with respect to $\mu_2$ can be written as integrals with respect to $\mu_1$ in the following way if $g: S_2 \rightarrow \R$ is integrable, then,
	
	$$\displaystyle \int g(y) d\mu_2(y)=\displaystyle \int g(f(x)) d\mu_1(x)$$
\end{teo}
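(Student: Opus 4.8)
The plan is to first verify that $\mu_2$ is a bona fide measure on $(S_2,\mathcal{A}_2)$ and then to establish the integral identity by the standard approximation argument, climbing from indicators to integrable functions.

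First I would check that $\mu_2$ is well defined: measurability of $f$ guarantees $f^{-1}(A)\in\mathcal{A}_1$ for every $A\in\mathcal{A}_2$, so $\mu_1(f^{-1}(A))$ is meaningful. Then $\mu_2\ge 0$ because $\mu_1\ge 0$, $\mu_2(\emptyset)=\mu_1(f^{-1}(\emptyset))=\mu_1(\emptyset)=0$, and $\sigma$-additivity is inherited from $\mu_1$ because preimages respect set operations: for pairwise disjoint $\{A_n\}_{n\in\N}\subseteq\mathcal{A}_2$ the sets $\{f^{-1}(A_n)\}_{n\in\N}$ are pairwise disjoint in $\mathcal{A}_1$ and $f^{-1}(\bigcup_n A_n)=\bigcup_n f^{-1}(A_n)$, whence $\mu_2(\bigcup_n A_n)=\mu_1(\bigcup_n f^{-1}(A_n))=\sum_n\mu_1(f^{-1}(A_n))=\sum_n\mu_2(A_n)$.

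Next I would prove $\int g\, d\mu_2=\int (g\circ f)\, d\mu_1$ in four stages. (i) If $g=\chi_A$ with $A\in\mathcal{A}_2$, then $g\circ f=\chi_{f^{-1}(A)}$, so $\int\chi_A\, d\mu_2=\mu_2(A)=\mu_1(f^{-1}(A))=\int\chi_{f^{-1}(A)}\, d\mu_1=\int(\chi_A\circ f)\, d\mu_1$. (ii) By linearity of the integral the identity extends to non-negative simple functions $g=\sum_{i=1}^k c_i\chi_{A_i}$, noting that $g\circ f=\sum_{i=1}^k c_i\chi_{f^{-1}(A_i)}$. (iii) For a non-negative measurable $g$ choose simple functions $0\le g_n\uparrow g$; then $g_n\circ f$ is measurable (composition of measurable maps) and $0\le g_n\circ f\uparrow g\circ f$, so the monotone convergence theorem applied to each side gives $\int g\, d\mu_2=\lim_n\int g_n\, d\mu_2=\lim_n\int(g_n\circ f)\, d\mu_1=\int(g\circ f)\, d\mu_1$, with both sides allowed to be $+\infty$. (iv) For general $g$ apply (iii) to $g^{+}$ and $g^{-}$; since $(g\circ f)^{+}=g^{+}\circ f$ and $(g\circ f)^{-}=g^{-}\circ f$, the function $g$ is $\mu_2$-integrable precisely when $g\circ f$ is $\mu_1$-integrable, and in that case subtracting the two finite identities yields the claim.

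Stages (i) and (ii) are routine; the points that need a little attention are stage (iii), where one must be sure that the increasing simple approximants of $g$ pull back to increasing simple approximants of $g\circ f$ before invoking monotone convergence, and stage (iv), where the statement ought to record the integrability equivalence rather than take it for granted. I do not expect a genuine obstacle here: the whole proof rests only on the facts that $f^{-1}$ commutes with countable unions and complements and that the Lebesgue integral is constructed by monotone approximation.
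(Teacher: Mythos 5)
Your proof is correct. Note that the paper does not prove this statement at all: it is imported verbatim from Schervish (Theorem A.81) as a known result, so there is no in-paper argument to compare against. What you have written is the standard pushforward/change-of-variables proof — verify that $\mu_2$ is a measure using the fact that $f^{-1}$ preserves disjointness and countable unions, then climb the usual ladder from indicators (where the identity is the definition of $\mu_2$) through simple functions, nonnegative functions via monotone convergence, and finally integrable functions via the decomposition $g=g^{+}-g^{-}$ together with $(g\circ f)^{\pm}=g^{\pm}\circ f$. All the steps you flag as needing care (the pullback of the monotone approximants, and recording that $g$ is $\mu_2$-integrable iff $g\circ f$ is $\mu_1$-integrable) are handled correctly, so there is no gap.
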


\begin{pro}
	Let $\mu$ be a probability measure and suppose that $G$ is defined on $[0,1]$. Then $\mu(A)=l(G^{-1}(A))$ for each $A \in \sigma([0,1])$, where $l$ is the Lebesgue measure and $\sigma([0,1])$ is the Borel $\sigma$-algebra of $[0,1]$.
	
	\begin{proof}
		By Proposition \ref{pro:G^{-1}}, we have that $G^{-1}(]a,b[)=\left]F(a), F_-(b)\right|$, for each $a, b \in X$ with $a<b$.
		
		Moreover, by Corollary \ref{cor:measure}, it holds that $\mu(]a,b[)=F_-(b)-F(a)$. It follows that $l(G^{-1}(]a,b[))=\mu(]a,b[)$ for each $a, b \in X$ with $a<b$.
		
		Now let $\mu_2$ be the measure defined by $\mu_2(A)=l(G^{-1}(A))$, for each $A \in \sigma([0,1])$. Indeed, $\mu_2$ is a measure by Theorem \ref{teo:fun} and Corollary \ref{cor:Gmeasurable}. Note that the fact that $\mu(]a,b[)=\mu_2(]a,b[)$, for each $a,b \in \X$ with $a<b$, implies that $\mu=\mu_2$ on the algebra $\mathfrak{A}$. Therefore $\mu$ and $\mu_2$ coincides in an algebra which generates $\sigma([0,1])$, so they are equal in $\sigma([0,1])$ (see for example Theorem \ref{teo:algebra}).
		
		Consequently, we can write $\mu(A)=l(G^{-1}(A))$ for each $A \in \sigma([0,1])$.
	\end{proof}
\end{pro}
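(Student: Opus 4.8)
The plan is to introduce the candidate measure $\mu_2$ given by $\mu_2(A)=l(G^{-1}(A))$ and to show that it coincides with $\mu$. First I would check that $\mu_2$ is a genuine probability measure: since $G$ is defined on all of $[0,1]$, Corollary \ref{cor:Gmeasurable} says $G$ is measurable with respect to the Borel $\sigma$-algebras, so Theorem \ref{teo:fun} guarantees that $A \mapsto l(G^{-1}(A))$ is a measure on the Borel $\sigma$-algebra of $\X$; moreover $\mu_2(\X)=l(G^{-1}(\X))=l([0,1])=1$, so it is a probability measure.

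The quickest way to finish is to compare cumulative distribution functions. For $x\in\X$ the set $(\leq x)$ is closed, hence Borel, and $G^{-1}((\leq x))=\{r\in[0,1]:G(r)\leq x\}$. By Corollary \ref{cor:inverse}, $G(r)\leq x$ is equivalent to $r\leq F(x)$, so $G^{-1}((\leq x))=\{r\in[0,1]:r\leq F(x)\}=[0,F(x)]$. Hence $F_{\mu_2}(x)=\mu_2((\leq x))=l([0,F(x)])=F(x)=F_\mu(x)$, that is, $\mu$ and $\mu_2$ have the same cdf. By the uniqueness result proved earlier in this section (two probability measures with the same cumulative distribution function coincide on the Borel $\sigma$-algebra of $\X$), we conclude $\mu=\mu_2$, i.e. $\mu(A)=l(G^{-1}(A))$ for every Borel set $A$.

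Alternatively, staying closer to the machinery already assembled, one can argue on intervals: by Proposition \ref{pro:G^{-1}}, $G^{-1}(]a,b[)=\left]F(a),F_-(b)\right|$ for $a<b$, whose Lebesgue measure is $F_-(b)-F(a)$ (a single endpoint being $l$-null), and by Corollary \ref{cor:measure} this equals $\mu(]a,b[)$; thus $\mu$ and $\mu_2$ agree on every bounded open interval of $\X$. One then upgrades this to agreement on the algebra $\mathfrak{A}$ of Lemma \ref{le:alg}, and invokes Theorem \ref{teo:algebra} (a $\sigma$-finite, here finite, measure on an algebra has a unique extension to the generated $\sigma$-algebra) to get $\mu=\mu_2$ on $\sigma(\mathfrak{A})$, which is the Borel $\sigma$-algebra of $\X$.

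I expect the main obstacle to be precisely this last upgrade in the second approach: passing from ``$\mu$ and $\mu_2$ agree on bounded open intervals'' to ``they agree on every element of $\mathfrak{A}$''. Since a member of $\mathfrak{A}$ is a finite disjoint union of intervals that may be half-open, closed, or unbounded, one must either recompute $l(G^{-1}(|a,b|))$ for each interval shape --- which requires the half-open and closed analogues of Proposition \ref{pro:G^{-1}} together with the corresponding cases of Corollary \ref{cor:measure} --- or else realize each such interval as a suitable countable monotone union of bounded open intervals and appeal to continuity of both measures, before finally using finite additivity. The cdf route above sidesteps this entirely, which is why I would prefer it.
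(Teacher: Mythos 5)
Your proposal is correct, and your preferred (first) route is genuinely different from the paper's. The paper follows your second route: it matches $\mu$ and $\mu_2=l\circ G^{-1}$ on open intervals via Proposition \ref{pro:G^{-1}} and Corollary \ref{cor:measure}, then asserts agreement on the algebra $\mathfrak{A}$ of Lemma \ref{le:alg} and applies Theorem \ref{teo:algebra}. The step you single out as the main obstacle --- passing from open intervals to arbitrary elements of $\mathfrak{A}$, which are finite unions of intervals that may be half-open, closed, or unbounded --- is exactly the step the paper leaves implicit; making it rigorous does require either the analogues of Proposition \ref{pro:G^{-1}} for the other interval shapes together with the remaining cases of Corollary \ref{cor:measure}, or a monotone-limit argument, so your diagnosis is accurate. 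Your cdf route avoids this entirely and is arguably cleaner: from Corollary \ref{cor:inverse}, $G^{-1}(\leq x)=\{r\in[0,1]: r\leq F(x)\}=[0,F(x)]$, so $F_{\mu_2}(x)=l([0,F(x)])=F(x)=F_{\mu}(x)$, and the uniqueness proposition proved just above (two probability measures with the same cdf agree on the Borel $\sigma$-algebra of $X$) gives $\mu=\mu_2$. The only ingredients are the measurability of $G$ (Corollary \ref{cor:Gmeasurable}), Theorem \ref{teo:fun} to see that $\mu_2$ is a probability measure, and the fact that $(\leq x)$ is closed, hence Borel. What the paper's approach buys is independence from the cdf-uniqueness proposition (it reproves uniqueness directly on the generating algebra); what yours buys is a shorter argument with no case analysis on interval shapes, reusing a result already established two propositions earlier.
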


Finally, by taking into account the previous results, we can generate samples with respect to the probability measure $\mu$ by following the classical procedure (see Section 1). In our case we will have to use $G$ to do it.

\begin{ob}Suppose that $G$ is defined on $[0,1]$. We can also calculate integrals with respect to $\mu$ by using Theorem \ref{teo:fun}, so for $g: X \rightarrow \R$,
	
	$$\displaystyle \int g(x) d\mu(x)=\displaystyle \int g(G(t)) dt$$
\end{ob}

\begin{ob}
	Suppose that $X$ is compact, then every subset of $X$ has both infimum and supremum (see Proposition \ref{pro:complete}) and hence $G$ is defined in each point of $[0,1]$. Therefore, in this case, we can generate samples with respect to a distribution based on a measure $\mu$.
\end{ob}

\begin{ob}
	Note that the classical theory for the distribution function is a particular case of the one we have developed for a separable LOTS.
	\end{ob}

\end{document}